\documentclass[reqno]{amsart} 
\usepackage[letterpaper, margin=1in]{geometry} 
\usepackage{amsmath}
\usepackage{amssymb}
\usepackage{hyperref}

\DeclareMathOperator{\supp}{supp}
\newcommand{\symdiff}{\mathbin{\triangle}}
\newcommand{\R}{{\mathbb R}}
\newcommand{\N}{{\mathbb N}}
\newcommand{\LL}{{\mathcal L}}
\newcommand{\Gcal}{{\mathcal G}}
\newtheorem{teo}{Theorem}[section]
\newtheorem{lema}[teo]{Lemma}
\newtheorem{prop}[teo]{Proposition}
\newtheorem{cor}[teo]{Corollary}

\theoremstyle{remark}
\newtheorem{obs}[teo]{Remark}

\theoremstyle{definition}
\newtheorem{defi}[teo]{Definition}
\newtheorem{ejem}[teo]{Example}

\numberwithin{equation}{section}
\begin{document}
\title[Optimal design for the fractional $g$--Laplacian]{Optimal design problems for the fractional $g$--Laplacian at prescribed energy level}
\author{Juan F. Spedaletti}

\address{Departamento de Matem\'atica, FCFMyN, Universidad Nacional de San
Luis and Instituto de Matem\'atica Aplicada San
Luis (IMASL), CONICET--UNSL, Av. Italia 1556, office
5, San Luis (5700), San Luis, Argentina.}

\email{jfspedaletti@unsl.edu.ar}

\urladdr{https://juanspedaletti.github.io}
\subjclass[2020]{35R11; 35P30; 49Q10; 46E30}

\keywords{Optimal design, shape optimization, fractional $g$--Laplacian, fractional Orlicz--Sobolev spaces, nonlinear eigenvalue problems}

\begin{abstract}
In this work we study two optimal design problems, of hard and soft obstacle type, in fractional Orlicz--Sobolev spaces. We also establish the connection between these problems, and finally we analyze the asymptotic behavior of the quotients $\Lambda_{s,\mu}(\alpha)/\mu$ with respect to the energy level $\mu$ at the endpoints $0$ and $+\infty$.
\end{abstract}
\maketitle
\section{Introduction}
Optimal design problems for eigenvalue-type constants have been studied in several settings. In the local one, the question is how to place an obstacle, or a hole, of prescribed measure inside a given domain so as to optimize an eigenvalue of the associated operator. The first Steklov eigenvalue of domains with holes is optimized in \cite{FBGR} by means of a shape derivative approach, a constrained shape optimization problem in Orlicz--Sobolev spaces is studied in \cite{dSSSS}, and the local counterpart of the soft obstacle problem considered below --- an eigenvalue of the $g$--Laplacian penalized on a set of prescribed measure, together with the limit of large penalization --- is treated in \cite{SSS}.

In the nonlocal setting, optimal design problems for the eigenvalues of the fractional $p$--Laplacian were studied in \cite{FBS}, where the hard and the soft obstacle problems are introduced, the optimal obstacle is identified with the set where an extremal vanishes, the two problems are connected by letting the penalization parameter tend to infinity, and the behavior as $s\uparrow1$ is analyzed; see also \cite{FBRS} for the first $p$--fractional eigenvalue with mixed boundary conditions. The present work extends the results of \cite{FBS} to the fractional Orlicz--Sobolev setting.

The functional framework for that extension was developed in \cite{FBSalort}, where fractional Orlicz--Sobolev spaces are introduced and their basic properties are obtained; see \cite{ACPS} for the corresponding embeddings and \cite{BahrouniSalort} for Neumann and Robin boundary conditions. Eigenvalue problems for the fractional $g$--Laplacian were considered in \cite{Salort} and, in the case without a set constraint, in \cite{FBSalortEig}, while regularity properties of the eigenfunctions and of the solutions can be found in \cite{FBSalortVivas} and \cite{KLL}. The essential new feature of this setting is the lack of homogeneity of the modular, which prevents normalizing the admissible functions and forces the problems to be posed at a prescribed energy level; as a consequence, the constants depend on that level in a nontrivial way, and this dependence is one of the subjects of the present paper.

Let $\Omega\subset \R^n$ be an open, connected and bounded domain. For $0<s<1$ and $G$ a Young function we define the modular
$$
\Phi_{s,G}(u):=\iint_{\R^{2n}}G(D^su)\,d\nu_n,
$$
where
$$
D^su=\frac{u(x)-u(y)}{|x-y|^s}
$$
is the \emph{H\"older quotient}. In contrast with the power case $G(t)=t^p$, the modular $\Phi_{s,G}$ is not homogeneous, and therefore the minimization problems associated with it have to be posed at a prescribed energy level. Given $\mu>0$, we normalize the admissible functions by
$$
\Gcal(u):=\int_\Omega G(u)\,dx=\mu .
$$
We refer to Section \ref{sec:prelim} for the precise definitions of the measure $\nu_n$ and of the spaces $W^{s,G}_0(\Omega)$ and $W^{s,G}_A(\Omega)$.

\medskip

\noindent\textbf{The $(s,G)$--hard obstacle problem.} Given a measurable set $A\subset\Omega$ with $|A|>0$, the admissible functions are forced to vanish on $A$, and we consider the constant
\begin{equation}\label{lambdamu}
\lambda_{s,\mu}(A):=\inf\left \{\iint_{\R^{2n}}G(D^s u)\,d\nu_n\colon u\in W_A^{s,G}(\Omega),\ \Gcal(u)=\mu \right \}.
\end{equation}
Since $\Phi_{s,G}$ is not homogeneous, $\lambda_{s,\mu}(A)$ is not the optimal constant of a Poincar\'e-type inequality, as is the case when $G(t)=t^p$; it is instead a constrained minimum, and it is convenient to regard it as a \emph{shape functional}
$$
\lambda_{s,\mu}\colon\mathcal{A}\longrightarrow(0,+\infty],\qquad
\mathcal{A}:=\{A\subset\Omega\colon A \text{ measurable}\},
$$
with the convention $\inf\emptyset=+\infty$.

\begin{obs}\label{obs:funcional}
The functional $\lambda_{s,\mu}$ has the following elementary properties.
\begin{enumerate}
\item It is well defined on equivalence classes of sets modulo null sets: if $|A\triangle A'|=0$ then $W_A^{s,G}(\Omega)=W_{A'}^{s,G}(\Omega)$ and hence $\lambda_{s,\mu}(A)=\lambda_{s,\mu}(A')$.
\item $\lambda_{s,\mu}(A)>0$ for every $A\in\mathcal{A}$. Indeed, $\lambda_{s,\mu}(A)\ge\mu/C$ with $C=C(n,s,G,\Omega)$, by the Poincar\'e-type inequality of Proposition \ref{prop:poincare} below. The value $+\infty$ is attained precisely when no $u\in W_A^{s,G}(\Omega)$ satisfies $\Gcal(u)=\mu$. Let us stress that, in contrast with the regional setting of \cite{FBS}, this positivity comes from the exterior condition and not from the obstacle, so that it holds for $A=\emptyset$ as well; the obstacle provides in addition a lower bound proportional to $|A|$. Indeed, since $u=0$ a.e. in $A$, restricting the integration in $\Phi_{s,G}(u)$ to $\Omega\times A$ gives $\lambda_{s,\mu}(A)\ge c\,|A|\,\mu$ with $c=c(n,s,G,\Omega)>0$, which is the analogue of \cite[Proposition 2.10]{FBS}.
\item It is nondecreasing with respect to inclusion: if $A\subset A'$ then $W_{A'}^{s,G}(\Omega)\subset W_{A}^{s,G}(\Omega)$ and therefore $\lambda_{s,\mu}(A)\le\lambda_{s,\mu}(A')$. In particular, some constraint on the size of the admissible obstacles is needed for the minimization to be nontrivial.
\item For fixed $A$, the map $\mu\mapsto\lambda_{s,\mu}(A)$ is nondecreasing. Indeed, if $0<\mu_1<\mu_2$ and $\Gcal(u)=\mu_2$, choosing $0<t<1$ with $\Gcal(tu)=\mu_1$ we get $\lambda_{s,\mu_1}(A)\le\Phi_{s,G}(tu)\le\Phi_{s,G}(u)$, and it suffices to take the infimum over such $u$.
\end{enumerate}
\end{obs}

The set $A$ plays the role of an obstacle on which the admissible functions must vanish. The first optimal design problem that we address consists in minimizing this constant among obstacles of prescribed measure: given $\alpha\in(0,1)$ we set
$$
\mathcal{A}_\alpha:=\{A\subset\Omega\colon A\text{ measurable},\ |A|=\alpha|\Omega|\},
$$
and we look for an optimal configuration $A\in\mathcal{A}_\alpha$ such that
\begin{equation}\label{Lambdamu}
\lambda_{s,\mu}(A)=\Lambda_{s,\mu}(\alpha):=\inf\{\lambda_{s,\mu}(A')\colon A'\in\mathcal{A}_\alpha\}.
\end{equation}
We call \eqref{Lambdamu} the \emph{$(s,G)$--hard obstacle problem at energy level $\mu$}, since the optimal set $A$ can be seen as the obstacle where the extremals are forced to vanish.

\medskip

\noindent\textbf{The $(s,G)$--soft obstacle problem.} A natural variant is obtained by replacing the constraint $u=0$ on $A$ by a penalization term. Since the class of characteristic functions is not closed under any reasonable topology, we relax it and consider the admissible potentials
$$
\mathcal{B}_\alpha:=\left\{\varphi\in L^\infty(\Omega)\colon 0\le\varphi\le 1,\ \int_\Omega \varphi\,dx=\alpha|\Omega|\right\},
$$
so that $\chi_A\in\mathcal{B}_\alpha$ if and only if $A\in\mathcal{A}_\alpha$. Given $\sigma>0$ and $\varphi\in\mathcal{B}_\alpha$ we define
\begin{equation}\label{lambdasoft}
\lambda_{s,\mu}(\varphi,\sigma):=\inf\left\{\iint_{\R^{2n}}G(D^su)\,d\nu_n+\sigma\int_\Omega G(u)\varphi\,dx\colon u\in W_0^{s,G}(\Omega),\ \Gcal(u)=\mu\right\}.
\end{equation}
The corresponding optimal design problem consists in finding an optimal potential $\varphi_\sigma\in\mathcal{B}_\alpha$ such that
\begin{equation}\label{Lambdasoft}
\lambda_{s,\mu}(\varphi_\sigma,\sigma)=\Lambda_{s,\mu}(\alpha,\sigma):=\inf\{\lambda_{s,\mu}(\varphi,\sigma)\colon \varphi\in\mathcal{B}_\alpha\}.
\end{equation}
We call \eqref{Lambdasoft} the \emph{$(s,G)$--soft obstacle problem at energy level $\mu$}.

\begin{obs}\label{obs:funcionalsoft}
The constants \eqref{lambdasoft} enjoy the following properties, which parallel those of Remark \ref{obs:funcional}.
\begin{enumerate}
\item They depend on $\varphi$ only through its equivalence class modulo null sets.
\item They are finite and positive. More precisely,
$$
\frac{\mu}{C}\ \le\ \lambda_{s,\mu}(\varphi,\sigma)\ \le\ \lambda_{s,\mu}(\emptyset)+\sigma\mu,
$$
where $C=C(n,s,G,\Omega)$ is the constant of Proposition \ref{prop:poincare} below. The lower bound follows by discarding the nonnegative penalization term and applying Proposition \ref{prop:poincare}, and the upper one from $0\le\varphi\le1$, which gives $\int_\Omega G(u)\varphi\,dx\le\Gcal(u)=\mu$. In particular, and in contrast with $\lambda_{s,\mu}(A)$, the value $+\infty$ is never attained, the reason being that here the class of admissible functions does not depend on $\varphi$. The same bounds hold for $\Lambda_{s,\mu}(\alpha,\sigma)$.
\item They are nondecreasing in $\varphi$ and in $\sigma$: if $\varphi\le\varphi'$ a.e. in $\Omega$ then $\lambda_{s,\mu}(\varphi,\sigma)\le\lambda_{s,\mu}(\varphi',\sigma)$, and if $\sigma\le\sigma'$ then $\lambda_{s,\mu}(\varphi,\sigma)\le\lambda_{s,\mu}(\varphi,\sigma')$.
\item For fixed $\sigma$ and $\varphi$, the map $\mu\mapsto\lambda_{s,\mu}(\varphi,\sigma)$ is nondecreasing, by the same scaling argument as in Remark \ref{obs:funcional}.
\item For every $A\in\mathcal{A}_\alpha$ and every $\sigma>0$,
$$
\lambda_{s,\mu}(\chi_A,\sigma)\le\lambda_{s,\mu}(A),
$$
since every $u\in W_A^{s,G}(\Omega)$ with $\Gcal(u)=\mu$ is admissible in \eqref{lambdasoft} and the penalization term vanishes for such a $u$. Together with (3), this shows that the limit of $\lambda_{s,\mu}(\chi_A,\sigma)$ as $\sigma\to\infty$ exists and does not exceed $\lambda_{s,\mu}(A)$.
\end{enumerate}
\end{obs}

\medskip

The two problems are related by the fact that the term $\sigma\int_\Omega G(u)\varphi\,dx$ acts as a penalization, so that heuristically
$$
\lambda_{s,\mu}(\chi_A,\sigma)\longrightarrow\lambda_{s,\mu}(A)\qquad\text{as }\sigma\to\infty,
$$
and one of our aims is to make this assertion rigorous, both at the level of the constants and at the level of the optimal configurations. By Remark \ref{obs:funcionalsoft}(5) one of the two inequalities is immediate, so that the content of this assertion is the reverse one.

\medskip

Sections \ref{sec:prelim} and \ref{sec:operator} collect the preliminaries on Young functions, Orlicz and fractional Orlicz--Sobolev spaces, on the operator $(-\Delta_g)^s$ and on the two eigenvalue problems that arise as the Euler--Lagrange equations of the constants above.

Section \ref{sec:optimal} is devoted to the two optimal design problems. For the hard obstacle problem we first characterize $\Lambda_{s,\mu}(\alpha)$ as an infimum taken over functions whose zero set is large enough, with no reference to the obstacle (Lemma \ref{lema:caract}), and deduce from it the existence of an optimal configuration (Theorem \ref{exop}). The main result of the section, and the first main result of the paper, is Theorem \ref{teo:identif}: the zero set of an extremal has exactly the prescribed measure,
$$
\big|\{u=0\}\cap\Omega\big|=\alpha|\Omega| ,
$$
from which the identification of the optimal obstacle with the set where the corresponding extremal vanishes follows at once (Corollary \ref{coro:identif}). In other words, the optimal configuration is not an additional unknown: it is determined by any extremal. Let us stress that its proof uses neither a minimum principle nor any regularity theory for $(-\Delta_g)^s$; the only ingredients are the Euler--Lagrange equation and the fact that the layer $\{0<u<\delta\}$ shrinks to a null set as $\delta\to0^+$. For the soft obstacle problem we prove, by means of the bathtub principle, that there exists an optimal pair and that the optimal potential can always be taken to be a characteristic function (Theorem \ref{teo:paroptimo}), so that nothing is lost in the relaxation. Finally, Theorem \ref{teo:conexion} makes the above heuristics rigorous: the constants of the soft problem increase to the constant of the hard one as $\sigma\to\infty$ and, up to a subsequence, the optimal pairs converge to an optimal configuration of the hard problem, the optimal potentials converging to the characteristic function of the optimal obstacle.

Section \ref{sec:asymptotic} analyzes the dependence on the energy level $\mu$. Since $\Phi_{s,G}$ is not homogeneous, the constants at two different energy levels can only be compared up to the exponents $p^-/p^+$ and $p^+/p^-$ (Proposition \ref{prop:escala}), and from that comparison we obtain that $\mu\mapsto\Lambda_{s,\mu}(\alpha)$ is strictly increasing and locally Lipschitz continuous (Corollary \ref{propLambda}). The main result of the section is Theorem \ref{limitecociente}, which identifies the limits of the quotients $\Lambda_{s,\mu}(\alpha)/\mu$ at the two ends of the energy scale: under a non-concentration condition on the extremals, and for domains that are star-shaped with respect to a ball, those limits are the optimal design constants, for the same obstacle problem, associated with the fractional $p_0$-- and $p_\infty$--Laplacians, where $p_0$ and $p_\infty$ are the Matuszewska exponents of $G$ at the origin and at infinity. The theorem also shows that the suitably rescaled extremals converge to extremals of the corresponding limit problem, and Example \ref{ejem:distintos} exhibits a Young function and a domain for which the two limits are different, so that the two limit problems do not coincide. As a consequence, in Theorem \ref{converoptimos} we prove that the optimal sets $A_\mu$ converge in $L^1(\Omega)$, up to a subsequence, to an optimal set of the corresponding limit problem.

\section{Preliminaries}\label{sec:prelim}
\subsection{Young functions and Orlicz spaces}

A function $G\colon \R\to\R$ is called a {\em Young function} if it can be written as
$$
G(t) = \int_0^{|t|} g(r)\, dr,
$$
where $g\colon [0,\infty)\to [0,\infty)$ is nondecreasing and right continuous, $g(t)=0$ if and only if $t=0$, and $g(t)\to\infty$ as $t\to\infty$. Then $G$ is even, convex and continuous, $G(t)>0$ for $t\neq0$, $G(t)/t\to 0$ as $t\to 0$ and $G(t)/t\to\infty$ as $t\to\infty$; moreover $g=G'$ a.e. Since $G$ is even, throughout the paper we simply write $G(t)$ instead of $G(|t|)$.

It will be helpful to extend $g$ to the entire real line by oddness, setting $g(0)=0$ and $g(t)=-g(-t)$ for $t<0$; from now on, $g$ is always understood to be extended to $\R$ in this way.
We recall now some basic definitions on Orlicz spaces that can be found, for instance, in \cite{KR}.

Let $U\subset \R^n$ be an open set and let $\nu$ be a $\sigma$--finite Borel measure on $U$. The Orlicz class $\LL_G(U, d\nu)$ is defined as
$$
\LL_G(U,d\nu) := \left\{u\colon U\to \R,\ \text{measurable}\colon \int_U G(u)\, d\nu < \infty\right\}.
$$
The Orlicz space $L_G(U,d\nu)$ is then defined as the linear hull of $\LL_G(U,d\nu)$. It follows that $L_G(U,d\nu)$ can be characterized as
$$
L_G(U,d\nu) = \left\{ u\colon U\to \R,\ \text{$\nu$--measurable}\colon \int_U G\left (\frac{u}{k}\right )\, d\nu < \infty, \text{ for some } k>0\right\}.
$$
This space is a Banach space when it is equipped, for instance, with the {\em Luxemburg norm}, i.e.
$$
\|u\|_{L_G(U,d\nu)} = \|u\|_{G,U,d\nu} = \|u\|_{G,d\nu} := \inf\left\{ k>0\colon \int_U G\left(\frac{u}{k}\right)\, d\nu \leq 1\right\}.
$$
We shall also use the following classes of Young functions,
$$
\Delta_2^0:=\Big\{G\colon \limsup_{t\to 0}\frac{G(2t)}{G(t)}<\infty\Big\},\qquad
\Delta_2^\infty:=\Big\{G\colon \limsup_{t\to \infty}\frac{G(2t)}{G(t)}<\infty\Big\},
$$
and we set $\Delta_2:=\Delta_2^0\cap\Delta_2^\infty$. Since $G$ is continuous and positive on $(0,+\infty)$, the quotient $G(2t)/G(t)$ is bounded on compact subsets of $(0,+\infty)$, and therefore
\begin{equation}\label{Delta2}
G\in\Delta_2\qquad\Longleftrightarrow\qquad \sup_{t>0}\frac{G(2t)}{G(t)}<\infty .
\end{equation}
A well known and interesting fact is that $\LL_G(U,d\nu) = L_G(U,d\nu)$ if and only if $G\in\Delta_2$, and that the Orlicz space $L_G(U,d\nu)$ is separable under the same condition. Let us stress that it is the global condition \eqref{Delta2} which is relevant here: its classical formulation in terms of $\Delta_2^\infty$ alone is equivalent to \eqref{Delta2} only for finite measures, and the measures we shall work with are not finite.

Given a Young function $G$, we define its complementary function $\bar G$ as
$$
\bar G(t) := \sup\{ \tau|t|-G(\tau)\colon \tau\geq 0\}.
$$
Observe that $\bar G$ is also a Young function and is the optimal function in the Young inequality
\begin{equation}\label{young}
\tau t\le G(t) + \bar G(\tau),
\end{equation}
for all $\tau,t$. Observe that equality in \eqref{young} is achieved if and only if $\tau=g(t)\operatorname{sign} t$ or $t=\bar g(\tau)\operatorname{sign}\tau$ where $\bar{g}(t)$ is the derivative of $\bar{G}(t)$.

It follows directly from \eqref{young} that if $u\in L_G(U,d\nu)$ and $v\in L_{\bar G}(U,d\nu)$, then $uv\in L^1(U)$ and
$$
\int_U |uv|\, d\nu \le 2 \|u\|_G \|v\|_{\bar G}.
$$
We recall that a Young function $G$ satisfies the Lieberman condition \cite{Lieberman} if
\begin{equation}\label{Liebermann}\tag{L}
1<p^-\leq\frac{tg(t)}{G(t)}\leq p^+<\infty\qquad \forall t>0.
\end{equation}
Observe that the convexity of $G$ gives $G(t)=\int_0^tg(s)\,ds\le tg(t)$, so that the quotient in \eqref{Liebermann} is always greater than or equal to $1$; the content of the left hand inequality is therefore the strict inequality $p^->1$.

The condition \eqref{Liebermann} is equivalent to
\begin{equation}\label{sandwich}
\min\{a^{p^-},a^{p^+}\}\,G(t)\le G(at)\le \max\{a^{p^-},a^{p^+}\}\,G(t),\qquad a,t>0,
\end{equation}
see \cite[Lemma 2.1]{FIN} or \cite[Lemma 2.1]{Salort}.

The Lieberman condition has a simple reading in terms of the classes introduced above. By \cite[Theorem 4.1]{KR}, $G\in\Delta_2$ if and only if there exists $p>1$ such that $tg(t)/G(t)\le p$ for every $t>0$; that is, the right hand side of \eqref{Liebermann} amounts to $G\in\Delta_2$. The left hand side corresponds, in the same way, to $\bar G\in\Delta_2$: from the equality case in \eqref{young} we have $\bar G(g(t))=tg(t)-G(t)$ for every $t>0$, so that, writing $s=g(t)$ and $r:=tg(t)/G(t)$,
$$
\frac{s\bar g(s)}{\bar G(s)}=\frac{r}{r-1},
$$
and $r\mapsto r/(r-1)$ is decreasing; hence $G$ satisfies \eqref{Liebermann} with exponents $p^-,p^+$ if and only if $\bar G$ satisfies \eqref{Liebermann} with the conjugate exponents $(p^+)'$ and $(p^-)'$. Summarizing,
\begin{equation}\label{LyDelta2}
\text{\eqref{Liebermann} holds}\qquad\Longleftrightarrow\qquad G\in\Delta_2\ \text{ and }\ \bar G\in\Delta_2 ,
\end{equation}
which is the reason why \eqref{Liebermann} is the natural standing assumption in this paper.

In order to analyze the asymptotic behavior of the problems we consider the so called Matuszewska--Orlicz functions, defined for $t>0$ as
\begin{equation}\label{Matu}
M_0(t):=\limsup_{\varepsilon\to 0}\frac{G(\varepsilon t)}{G(\varepsilon)}\qquad\text{and}\qquad M_\infty(t):=\limsup_{\varepsilon\to \infty}\frac{G(\varepsilon t)}{G(\varepsilon)}.
\end{equation}
These functions were introduced in \cite{MO}; see also \cite{Mat} and the monograph \cite{Maligranda}. For $i=0,\infty$ the function $M_i$ is convex and submultiplicative, that is $M_i(\tau t)\le M_i(\tau)M_i(t)$, and satisfies $M_i(0)=0$ and $M_i(1)=1$. Moreover $M_i(t)<\infty$ for every $t>0$ whenever $G\in\Delta_2^i$, and if in addition the $\limsup$ in \eqref{Matu} is a limit, then there exists $p_i\ge1$ such that $M_i(t)=t^{p_i}$; see \cite[Section 2]{FBSalortEig} and, for the last assertion, \cite[Proposition 2.2]{FBSalortEig}.

Under \eqref{Liebermann} these facts are available for free. Indeed $G\in\Delta_2$ by \eqref{LyDelta2}, and applying \eqref{sandwich} with $a=t$ and with $\varepsilon$ in place of $t$, dividing by $G(\varepsilon)$ and letting $\varepsilon\to i$, we obtain
\begin{equation}\label{cotasMatu}
\min\{t^{p^-},t^{p^+}\}\le M_i(t)\le\max\{t^{p^-},t^{p^+}\},\qquad t>0,\ i=0,\infty .
\end{equation}
In particular $M_0$ and $M_\infty$ are finite, and whenever they are powers the corresponding exponent satisfies $p_i\in[p^-,p^+]$.

\begin{obs}\label{obs:Matuyoung}
Assume \eqref{Liebermann}. Then $M_0$ and $M_\infty$ are Young functions satisfying \eqref{Liebermann} with the same exponents $p^-$ and $p^+$. Indeed, $M_i$ is convex, being a pointwise $\limsup$ of convex functions, \eqref{cotasMatu} gives the required normalizations at the origin and at infinity since $p^->1$, and the submultiplicativity of $M_i$ together with \eqref{cotasMatu} gives \eqref{sandwich} for $M_i$. In particular every result of this paper proved under \eqref{Liebermann} applies to $M_0$ and $M_\infty$ as well.
\end{obs}
\subsection{Fractional Orlicz-Sobolev spaces}
In the product space $\R^n\times\R^n = \R^{2n}$ we define the measure
$$
d\nu_n := \frac{dxdy}{|x-y|^n}.
$$
Observe that this is a Borel measure and that if $K\subset \R^{2n}\setminus \Delta$ is compact, then $\nu_n(K)<\infty$, where $\Delta\subset \R^n\times\R^n$ is the diagonal $
\Delta := \{(x,x)\colon x\in\R^n\}$.

We will consider Orlicz spaces $L_G(U,d\nu)$ in three situations: with $U=\Omega$ and with $U=\R^n$, in both cases for $d\nu=dx$ the Lebesgue measure, and with $U=\R^{2n}$ for $d\nu=d\nu_n$.

We will use the notations
$$
L_G(\R^n)=L_G(\R^n,dx),\qquad L_G(\Omega)=L_G(\Omega,dx),\qquad L_G(\nu_n)=L_G(\R^{2n},d\nu_n).
$$
Now, given a fractional parameter $s\in (0,1)$, we introduce the notation for the H\"older quotient of a function $u\colon\R^n\to \R$.
$$
D^su(x,y) := \frac{u(x)-u(y)}{|x-y|^s}.
$$
Then $D^su\colon \R^{2n}\setminus \Delta\to \R$. We consider the Orlicz-Sobolev space $W^{s,G}(\R^n)$ defined as
$$
W^{s,G}(\R^n)=\{u\in L_G(\R^n)\colon D^su\in L_G(\R^{2n},\nu_n)\},
$$
equipped with the norm
$$
\|u\|_{s,G}:=\|u\|_{L_G}+\|D^su\|_{G,\nu_n}.
$$
Also, we define the following versions that will be useful: for $\Omega\subset \R^n$ open and $A\subset \Omega$
\begin{align*}
W_0^{s,G}(\Omega)&=\{u\in W^{s,G}(\R^n)\colon u=0 \text{ a.e. in }\R^n\setminus \Omega\}
\\
W_A^{s,G}(\Omega)&=\{u\in W_0^{s,G}(\Omega)\colon u=0\text{ a.e. in }A\}.
\end{align*}
For $u\in W^{s,G}(\R^n)$ we write
$$
[u]_{s,G}:=\|D^su\|_{G,\nu_n}=\inf\left\{\lambda>0\colon \Phi_{s,G}\left(\frac{u}{\lambda}\right)\le 1\right\}
$$
for the \emph{Gagliardo--Luxemburg seminorm} of $u$, so that $\|u\|_{s,G}=\|u\|_{L_G(\R^n)}+[u]_{s,G}$.

The basic functional properties of these spaces were obtained in \cite{FBSalort}.
\begin{prop}\label{prop:espacios}
Let $0<s<1$ and let $G$ be a Young function. Then
\begin{enumerate}
\item $W^{s,G}(\R^n)$ is a Banach space;
\item if $G$ satisfies the $\Delta_2$--condition \eqref{Delta2}, then $W^{s,G}(\R^n)$ is separable;
\item if both $G$ and $\bar G$ satisfy the $\Delta_2$--condition, that is, if \eqref{Liebermann} holds, then $W^{s,G}(\R^n)$ is reflexive;
\item if $\Omega$ is bounded and \eqref{Liebermann} holds, then the embedding $W_0^{s,G}(\Omega)\hookrightarrow L_G(\Omega)$ is compact.
\end{enumerate}
\end{prop}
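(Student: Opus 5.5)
The four items are standard facts about Orlicz spaces transferred to $W^{s,G}(\R^n)$ through an isometric embedding. Consider the map
$$
T\colon W^{s,G}(\R^n)\to L_G(\R^n)\times L_G(\nu_n),\qquad Tu=(u,D^su),
$$
and equip the product with the norm $\|(f,F)\|=\|f\|_{L_G}+\|F\|_{G,\nu_n}$. Then $T$ is a linear isometry onto its image. Items (1)--(3) then reduce to three facts: the image of $T$ is closed, and Banach, separable and reflexive spaces are preserved under closed subspaces and finite products.

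\textbf{Items (1)--(3).} For (1), recall that $L_G(U,d\nu)$ with the Luxemburg norm is a Banach space for every $\sigma$--finite measure. To see that $T(W^{s,G})$ is closed, take $u_k\to f$ in $L_G(\R^n)$ with $D^su_k\to F$ in $L_G(\nu_n)$. Norm convergence in an Orlicz space implies convergence in measure on sets of finite measure. Passing to a subsequence, $u_k\to f$ a.e., hence $D^su_k\to D^sf$ $\nu_n$--a.e. off the diagonal, and so $F=D^sf$. For (2), $G\in\Delta_2$ in the global form \eqref{Delta2} makes $L_G(\R^n)$ and $L_G(\nu_n)$ separable, since both measures are $\sigma$--finite Borel measures on separable metric spaces; subspaces of separable metric spaces are separable. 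For (3), under \eqref{Liebermann} we have $G,\bar G\in\Delta_2$ by \eqref{LyDelta2}. Then $(L_G)^*\cong L_{\bar G}$ and $(L_{\bar G})^*\cong L_G$, so each factor is reflexive, and closed subspaces of reflexive spaces are reflexive.

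\textbf{Item (4).} This is the substantive step. I would use the Fréchet--Kolmogorov (Riesz) compactness criterion in $L_G(\Omega)$. Let $(u_k)$ be bounded in $W_0^{s,G}(\Omega)$. By \eqref{sandwich} we may assume the modulars $\Phi_{s,G}(u_k)$ and $\Gcal(u_k)$ are bounded by some $M$.

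The key estimate is a translation bound. For $|h|\le1$, average the modular over the ball in a standard way to control $\int_{\R^n}G\big((u(x+h)-u(x))/|h|^s\big)dx$ by $C\,\Phi_{s,G}(u)$ plus lower order terms. Concretely, write $u(x+h)-u(x)=[u(x+h)-u(z)]+[u(z)-u(x)]$, average in $z\in B(x,|h|)$, and apply Jensen's inequality to the convex function $G$. Combined with \eqref{sandwich}, this gives
$$
\int_{\R^n}G(u(\cdot+h)-u)\,dx\le C\,\max\{|h|^{sp^-},|h|^{sp^+}\}\,M\to0
$$
uniformly in $k$ as $h\to0$. Since all $u_k$ vanish outside the bounded set $\Omega$, tightness is automatic.

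It remains to pass from modular to norm. Fréchet--Kolmogorov applied in $L^1(\Omega)$, which contains $L_G(\Omega)$ because $\Omega$ is bounded, yields a subsequence converging a.e. and in $L^1$. Uniform integrability of $G(u_k-u_j)$ then upgrades this to modular convergence, and $\Delta_2$ converts modular convergence into Luxemburg norm convergence. The uniform integrability follows from the translation estimate together with the Vitali convergence theorem. If that route fails, I would instead use the fractional Orlicz Sobolev embedding into a strictly larger Orlicz space (the embeddings of \cite{ACPS}) to dominate $G(u_k)$.

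I expect the main obstacle to be this last conversion, namely obtaining uniform integrability of $G(u_k)$ from a bound on the modular alone. The averaging estimate itself is routine.
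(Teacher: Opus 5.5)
Items (1)--(3) are correct. The isometric embedding $u\mapsto(u,D^su)$ into $L_G(\R^n)\times L_G(\nu_n)$ with closed range is the standard argument. The paper itself gives no proof of this proposition. It simply refers to \cite{FBSalort}, and obtains (4) from the local compactness proved there together with the fact that elements of $W_0^{s,G}(\Omega)$ vanish outside $\Omega$. Your translation estimate is also right. Jensen over $B(x,|h|)$ gives $\int_{\R^n}G\big((u(\cdot+h)-u)/(2^{1+s}|h|^s)\big)\,dx\le C(n)\,\Phi_{s,G}(u)$, with no lower order terms.

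The gap is in (4), exactly at the step you flag: you never establish that $\{G(u_k)\}$ is uniformly integrable. The mechanism you propose cannot supply it. Vitali's theorem \emph{assumes} uniform integrability (together with a.e.\ convergence) and concludes $L^1$ convergence; it does not produce uniform integrability. As written, you therefore only have precompactness in $L^1(\Omega)$, which is strictly weaker than precompactness in $L_G(\Omega)$. The fallback through \cite{ACPS} is not a general remedy either. Those embeddings require integrability conditions on $G$ at $0$ and at $\infty$ that \eqref{Liebermann} does not imply; compare the restriction $3s<n$ needed in Example \ref{ejem:distintos}.

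The missing step is short once you mollify. By Jensen,
$$
\int_{\R^n}G(u_k*\rho_\varepsilon-u_k)\,dx\le\sup_{|h|\le\varepsilon}\int_{\R^n}G\big(u_k(\cdot+h)-u_k\big)\,dx=:\omega(\varepsilon),
$$
and $\omega(\varepsilon)\to0$ uniformly in $k$ by your estimate. Moreover $\|u_k*\rho_\varepsilon\|_{L^\infty}\le\|\rho_\varepsilon\|_{L^\infty}\sup_k\|u_k\|_{L^1(\Omega)}=:C_\varepsilon<\infty$. Convexity and \eqref{sandwich} then give, for every measurable $E\subset\Omega$,
$$
\int_E G(u_k)\,dx\le 2^{p^+-1}\Big(\omega(\varepsilon)+G(C_\varepsilon)\,|E|\Big),
$$
which is the uniform integrability you need. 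Vitali applied to $G(u_k-u)\le 2^{p^+-1}\big(G(u_k)+G(u)\big)$, followed by \eqref{sandwich}, then finishes your route. Alternatively, you can bypass $L^1$ entirely. By \eqref{sandwich}, your modular estimate already yields $\sup_k\|u_k(\cdot+h)-u_k\|_{L_G}\to0$. The classical Kolmogorov--Riesz argument (Arzel\`a--Ascoli for $\{u_k*\rho_\varepsilon\}_k$ plus the Jensen bound above) then runs verbatim in $L_G(\Omega)$.
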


These properties are inherited by the spaces in which we shall work. Indeed, convergence in $L_G(\R^n)$ implies, up to a subsequence, convergence almost everywhere, so that the conditions $u=0$ a.e. in $\R^n\setminus\Omega$ and $u=0$ a.e. in $A$ are stable under limits; hence $W_0^{s,G}(\Omega)$ and $W_A^{s,G}(\Omega)$ are closed linear subspaces of $W^{s,G}(\R^n)$. In particular they are Banach spaces, they are separable whenever $G$ satisfies \eqref{Delta2}, and they are reflexive -- and, being convex, weakly closed -- whenever \eqref{Liebermann} holds. Moreover, if $\Omega$ is bounded and \eqref{Liebermann} holds, the embeddings $W_0^{s,G}(\Omega)\hookrightarrow L_G(\Omega)$ and $W_A^{s,G}(\Omega)\hookrightarrow L_G(\Omega)$ are compact. Let us stress that no regularity of $\partial\Omega$ is required here: the compactness result of \cite{FBSalort} is local in $\R^n$, and the elements of $W_0^{s,G}(\Omega)$ vanish outside the bounded set $\Omega$, so that local convergence in $L_G$ is in fact convergence in $L_G(\Omega)$. Let us finally mention that, under \eqref{Liebermann} and for $\Omega$ a bounded Lipschitz domain, $W_0^{s,G}(\Omega)$ coincides with the closure of $C_c^\infty(\Omega)$ in $W^{s,G}(\R^n)$; see \cite{FSV} for the case $G(t)=t^p$ and \cite{BBO} for the Orlicz setting.

Under \eqref{Liebermann} the modular and the Luxemburg norm control each other. Indeed, by \eqref{sandwich} for every $u\in L_G(\Omega)$
$$
\min\left\{\|u\|_{G,\Omega}^{p^-},\|u\|_{G,\Omega}^{p^+}\right\}\le \int_\Omega G(u)\,dx\le \max\left\{\|u\|_{G,\Omega}^{p^-},\|u\|_{G,\Omega}^{p^+}\right\}.
$$
In particular, modular convergence and norm convergence are equivalent, and the same holds in $L_G(\nu_n)$ for the seminorm $[\,\cdot\,]_{s,G}$ and the modular $\Phi_{s,G}$.

Finally, we shall use the following Poincar\'e-type inequality, which in particular guarantees that the constants introduced below are positive.
\begin{prop}\label{prop:poincare}
Let $\Omega\subset\R^n$ be open and bounded, and assume \eqref{Liebermann}. Then there exists a constant $C=C(n,s,G,\Omega)>0$ such that
$$
\int_\Omega G(u)\,dx\le C\,\Phi_{s,G}(u)\qquad\text{for every } u\in W_0^{s,G}(\Omega).
$$
\end{prop}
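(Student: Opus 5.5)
The plan is to reduce the modular inequality to a classical fractional Poincaré inequality for a suitable power, combined with a nonlocal estimate that uses only the fact that $u$ vanishes outside $\Omega$. The cleanest route avoids norms and works directly with the modular, exploiting the exterior condition. Fix $u\in W_0^{s,G}(\Omega)$ and restrict the double integral in $\Phi_{s,G}(u)$ to $\Omega\times(\R^n\setminus\Omega)$. There $u(y)=0$, so
$$
\Phi_{s,G}(u)\ \ge\ \int_\Omega\int_{\R^n\setminus\Omega} G\Big(\frac{u(x)}{|x-y|^s}\Big)\frac{dy}{|x-y|^n}\,dx .
$$
Let $R:=\operatorname{diam}\Omega$. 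Choose a ball $B$ of radius $R$ containing $\Omega$, and restrict $y$ to the annulus $\{R'\le|x-y|\le 2R'\}$ intersected with the complement of $\Omega$, where $R'$ is chosen large, say $R'=2R$. Then this annulus around any $x\in\Omega$ lies entirely outside $\Omega$, and its measure is $c_nR'^n$.

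On that region $|x-y|^{-s}\ge (2R')^{-s}$ and $|x-y|^{-n}\ge (2R')^{-n}$. By monotonicity of $G$ and then \eqref{sandwich} with $a=(2R')^{-s}$ we obtain
$$
G\big(u(x)|x-y|^{-s}\big)\ \ge\ \min\{a^{p^-},a^{p^+}\}\,G(u(x)).
$$
Integrating in $y$ over the annulus gives a factor $c_nR'^n(2R')^{-n}=c_n2^{-n}$. Hence
$$
\Phi_{s,G}(u)\ \ge\ c_n2^{-n}\min\{a^{p^-},a^{p^+}\}\int_\Omega G(u)\,dx .
$$
This yields the claim with $C=2^n c_n^{-1}\max\{a^{-p^-},a^{-p^+}\}$, which depends only on $n$, $s$, $p^\pm$ (hence on $G$) and $\operatorname{diam}\Omega$. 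In fact only the upper estimate $p^+$ is needed, since $a<1$ once $R'$ is large, and $\Omega$ need not be connected or regular.

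The only step needing care is the geometric one: one must check that for every $x\in\Omega$ the shell $\{2R\le|x-y|\le4R\}$ is disjoint from $\Omega$. This holds because any $y$ in the shell satisfies $|x-y|\ge 2R>\operatorname{diam}\Omega$. One must also verify that measurability and Tonelli allow the restriction of the integral, which is immediate since the integrand is nonnegative. I expect no real obstacle; the point worth stressing is that, unlike the regional setting of \cite{FBS}, the bound comes entirely from the exterior interaction and does not involve any compactness argument.
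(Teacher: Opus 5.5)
Your argument is correct, and it takes a different route from the paper. The paper does not prove the proposition itself. It imports \cite[Proposition 3.2]{Salort}, where the inequality appears in the form $\int_\Omega G(u)\,dx\le\Phi_{s,G}(Cu)$, and then uses \eqref{sandwich} to move the constant outside the modular.

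You prove the modular form directly. You keep only $\Omega\times(\R^n\setminus\Omega)$, use $u=0$ there, and average over the shell $\{2R\le|x-y|\le 4R\}$ with $R=\operatorname{diam}\Omega$. This shell lies outside $\Omega$ for every $x\in\Omega$, and the result is the explicit constant $C=2^n\big((2^n-1)|B_1|\big)^{-1}\max\{a^{-p^-},a^{-p^+}\}$ with $a=(4R)^{-s}$.

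What your route buys:
\begin{itemize}
\item It is self-contained and needs neither connectedness nor any regularity of $\partial\Omega$.
\item It uses only the monotonicity of $G$ and the lower half of \eqref{sandwich}.
\item If you allow the shell radius $R'\ge\max\{2R,1\}$ (any $R'\ge 2R$ keeps the shell outside $\Omega$), then $a<1$. The bound $G(at)\ge a^{p^+}G(t)$ then follows from the upper inequality in \eqref{Liebermann} alone, so only $G\in\Delta_2$ is actually used.
\item It makes explicit the point of Remark \ref{obs:funcional}(2) that positivity comes from the exterior condition. The same restriction trick, to $\Omega\times A$, is what the paper uses there for the bound proportional to $|A|$.
\end{itemize}
One small inaccuracy: with $R'=2R$ fixed, as you wrote, $a<1$ only when $4\operatorname{diam}\Omega>1$. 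So your remark that ``$a<1$ once $R'$ is large'' needs that extra freedom in choosing $R'$. The $\min$ in your constant covers both cases anyway.

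What the citation buys: it is shorter, and it comes packaged with the Luxemburg-norm counterpart \cite[Corollary 3.3]{Salort} used in Remark \ref{obs:normamodular}. That counterpart also follows from your modular inequality, by applying it to $u/[u]_{s,G}$ and using \eqref{sandwich} once more.
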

This is \cite[Proposition 3.2]{Salort}, where the inequality is obtained in the equivalent form $\int_\Omega G(u)\,dx\le \Phi_{s,G}(C\,u)$ with an explicit constant depending only on $n$, $s$, $p^\pm$ and $\operatorname{diam}(\Omega)$; the stated form follows from \eqref{sandwich}.

\begin{obs}\label{obs:normamodular}
Proposition \ref{prop:poincare} says that on $W_0^{s,G}(\Omega)$ --- and hence on its closed subspace $W_A^{s,G}(\Omega)$ --- the Gagliardo--Luxemburg seminorm is in fact a norm, equivalent to $\|\cdot\|_{s,G}$. Indeed, $[u]_{s,G}\le\|u\|_{s,G}$ always, while the norm counterpart of Proposition \ref{prop:poincare}, namely $\|u\|_{G,\Omega}\le C\,[u]_{s,G}$ for $u\in W_0^{s,G}(\Omega)$, is \cite[Corollary 3.3]{Salort}.
Moreover, by \eqref{sandwich},
$$
\min\big\{[u]_{s,G}^{p^-},[u]_{s,G}^{p^+}\big\}\le \Phi_{s,G}(u)\le \max\big\{[u]_{s,G}^{p^-},[u]_{s,G}^{p^+}\big\},
$$
so that a subset of $W_A^{s,G}(\Omega)$ is bounded if and only if the modulars of its elements are bounded, and $u_i\to u$ in $W_A^{s,G}(\Omega)$ if and only if $\Phi_{s,G}(u_i-u)\to0$. We shall use these facts repeatedly, often without further mention.
\end{obs}

The following lemma, which replaces the elementary inequality available for $G(t)=t^p$, will be used repeatedly in what follows.

\begin{lema}\label{lema:convG}
Let $U$ and $\nu$ be as above and let $M$ be a Young function satisfying \eqref{Liebermann}. If $u_k\to u$ in $L_M(U,d\nu)$, then $M(u_k)\to M(u)$ in $L^1(U,d\nu)$.
\end{lema}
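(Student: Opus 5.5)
The plan is to reduce the claim to a dominated-convergence argument of Vitali type, with the Lieberman bound \eqref{Liebermann} supplying the domination. First, by Remark-type equivalence of modular and norm convergence, $u_k\to u$ in $L_M(U,d\nu)$ gives $\int_U M(u_k-u)\,d\nu\to0$. The aim is then to show that every subsequence of $(u_k)$ has a further subsequence along which $\int_U|M(u_k)-M(u)|\,d\nu\to0$; this yields convergence of the full sequence.

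For a subsequence, norm convergence gives (passing to a further subsequence) $u_k\to u$ $\nu$-a.e., and we may also arrange $\sum_k \|u_k-u\|_{M}<\infty$. The natural dominating function is then $h:=|u|+\sum_k|u_k-u|$. This lies in $L_M(U,d\nu)$ because the series converges in the Banach space $L_M$, and it satisfies $|u_k|\le h$ a.e. for every $k$. Since $M\in\Delta_2$ by \eqref{LyDelta2}, we have $L_M=\mathcal L_M$, so $\int_U M(h)\,d\nu<\infty$. Because $M$ is even and nondecreasing on $[0,\infty)$, this gives $M(u_k)\le M(h)$, and likewise $M(u)\le M(h)$. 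Finally, $M$ is continuous, so $M(u_k)\to M(u)$ a.e.

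With these ingredients, dominated convergence applied to $|M(u_k)-M(u)|\le 2M(h)$ gives $M(u_k)\to M(u)$ in $L^1(U,d\nu)$ along the subsequence, and the subsequence principle finishes the proof.

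The only delicate step is the construction of the integrable majorant. The point to stress is that membership of $h$ in $L_M$ must translate into finiteness of $\int M(h)$, and for that we need the global $\Delta_2$ condition \eqref{Delta2} rather than merely $\Delta_2^\infty$, since $\nu_n$ is not finite. Under \eqref{Liebermann} this is available directly: \eqref{sandwich} gives $\int M(h)\le \max\{\|h\|^{p^-},\|h\|^{p^+}\}<\infty$, taking $\|h\|_M$ as the scaling parameter. Everything else in the argument is routine.
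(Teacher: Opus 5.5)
Your argument is correct, but it takes a different route from the paper. The paper never passes to subsequences. It quotes from \cite{FBSalort} the inequality $M(a+b)\le C_\eta M(a)+(1+\eta)^{p^+}M(b)$ and turns it into the pointwise estimate $|M(a+b)-M(a)|\le\varepsilon M(a)+C_\varepsilon M(b)$. It then applies this with $a=u$, $b=u_k-u$, integrates, and uses modular convergence $\int_U M(u_k-u)\,d\nu\to0$ to get $\limsup_k\int_U|M(u_k)-M(u)|\,d\nu\le\varepsilon\int_U M(u)\,d\nu$, before letting $\varepsilon\to0$.

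You instead run the Riesz--Fischer-type domination argument:
\begin{enumerate}
\item extract a subsequence with $\sum_k\|u_k-u\|_M<\infty$;
\item build the majorant $h=|u|+\sum_k|u_k-u|\in L_M$;
\item use the global $\Delta_2$ condition to get $M(h)\in L^1(U,d\nu)$;
\item apply dominated convergence, and conclude with the subsequence principle.
\end{enumerate}
Step 2 needs the routine identification of the norm limit of the monotone partial sums with their pointwise limit $h$, e.g.\ by monotone convergence in $\int_U M(h_N/\lambda)\,d\nu$.

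Your version is more self-contained: it needs no convexity inequality from \cite{FBSalort}, only completeness of $L_M$ and $\Delta_2$. The modular convergence you state in your first sentence is in fact never used afterwards. The paper's version works along the whole sequence and gives a quantitative bound. It also isolates the pointwise $\varepsilon$-inequality, which is reused in the Brezis--Lieb step of Theorem \ref{teo:conexion}.

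Both proofs use only the upper half of \eqref{Liebermann}, that is, $M\in\Delta_2$ in the global sense. You are right to insist on the global condition, since $\nu_n$ is not a finite measure.
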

\begin{proof}
By \cite[Lemma 2.6]{FBSalort}, for every $\eta>0$ there exists $C_\eta>0$ such that $M(a+b)\le C_\eta M(a)+(1+\eta)^{p^+}M(b)$ for all $a,b>0$, and hence, for every $\varepsilon>0$ there exists $C_\varepsilon>0$ such that
$$
\big|M(a+b)-M(a)\big|\le \varepsilon\,M(a)+C_\varepsilon\,M(b)\qquad\text{for all }a,b\in\R .
$$
Applying this inequality with $a=u$ and $b=u_k-u$ and integrating over $U$,
$$
\int_U\big|M(u_k)-M(u)\big|\,d\nu\ \le\ \varepsilon\int_U M(u)\,d\nu+C_\varepsilon\int_U M(u_k-u)\,d\nu .
$$
By \eqref{sandwich} modular and norm convergence are equivalent in $L_M(U,d\nu)$, so that the last integral tends to $0$ as $k\to\infty$. Therefore
$$
\limsup_{k\to\infty}\int_U\big|M(u_k)-M(u)\big|\,d\nu\le \varepsilon\int_U M(u)\,d\nu
$$
for every $\varepsilon>0$, and the conclusion follows by letting $\varepsilon\to0$.
\end{proof}

\section{The operator $(-\Delta_g)^s$}\label{sec:operator}

In this section we introduce the operator associated with the modular $\Phi_{s,G}$ and we discuss in which classes of functions it is well defined. We then state the two eigenvalue problems that will appear as the Euler--Lagrange equations of the constants introduced in Section \ref{sec:prelim}: the eigenvalue problem with obstacle, associated with $\lambda_{s,\mu}(A)$, and the eigenvalue problem with a potential, associated with $\lambda_{s,\mu}(\varphi,\sigma)$.

\subsection{The fractional $g$--Laplacian operator}\label{subsec:operador}

Given $0<s<1$, the \emph{fractional $g$--Laplacian} of a function $u\colon\R^n\to\R$ is formally defined as
\begin{equation}\label{glap}
(-\Delta_g)^su(x):=2\,\mathrm{p.v.}\int_{\R^n}g\big(D^su(x,y)\big)\,\frac{dy}{|x-y|^{n+s}}
=2\lim_{\varepsilon\downarrow0}\int_{\R^n\setminus B_\varepsilon(x)}g\big(D^su(x,y)\big)\,\frac{dy}{|x-y|^{n+s}},
\end{equation}
where $g$ is understood to be extended to $\R$ by oddness. The principal value in \eqref{glap} is needed, since in general the integrand is not absolutely integrable near the singularity.

The following lemma gives a class of functions for which \eqref{glap} makes sense pointwise; it is the analogue of \cite[Lemma 2.1]{FBS}, where the case $G(t)=t^p$ with $p\ge2$ is considered.

\begin{lema}\label{lema:bienbdef}
Let $0<s<1$ and assume that $g$ is locally Lipschitz in $\R$. If $u\in C^2(\Omega)\cap L^\infty(\R^n)$, then $(-\Delta_g)^su(x)$ is well defined for every $x\in\Omega$.
\end{lema}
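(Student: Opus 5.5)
Fix $x\in\Omega$ and choose $r>0$ with $\overline{B_{2r}(x)}\subset\Omega$. We split the integral in \eqref{glap} into the far part $\R^n\setminus B_r(x)$ and the near part $B_r(x)\setminus B_\varepsilon(x)$, and show that the first is absolutely convergent and the second has a limit as $\varepsilon\downarrow0$.

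\emph{Far part.} Since $u\in L^\infty(\R^n)$, for $|y-x|\ge r$ we have $|D^su(x,y)|\le 2\|u\|_\infty r^{-s}=:T$. Because $g$ is monotone on $[0,\infty)$ and odd, $|g(D^su(x,y))|\le g(T)$. Hence the integrand is bounded by $g(T)|x-y|^{-n-s}$, which is integrable on $\R^n\setminus B_r(x)$ since $s>0$.

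\emph{Near part.} Here we use the usual symmetrization. For $y=x+h$ with $\varepsilon\le|h|<r$, the integral over the annulus equals, after the change $h\mapsto -h$,
\[
\int_{\varepsilon\le|h|<r}\tfrac12\Big[g\Big(\tfrac{u(x)-u(x+h)}{|h|^s}\Big)+g\Big(\tfrac{u(x)-u(x-h)}{|h|^s}\Big)\Big]\frac{dh}{|h|^{n+s}}.
\]
By oddness of $g$, the bracket equals $g(a)-g(b)$ with $a=\frac{u(x)-u(x+h)}{|h|^s}$ and $b=\frac{u(x-h)-u(x)}{|h|^s}$. By the mean value theorem and $u\in C^2(\overline{B_r(x)})$, we have $|a|,|b|\le \|\nabla u\|_{L^\infty(B_r(x))}\,r^{1-s}=:K$. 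By Taylor's formula,
\[
a-b=\frac{2u(x)-u(x+h)-u(x-h)}{|h|^s}, \qquad |a-b|\le \|D^2u\|_{L^\infty(B_r(x))}|h|^{2-s}.
\]
Let $L$ be the Lipschitz constant of $g$ on $[-K,K]$, which exists because $g$ is locally Lipschitz. Then the symmetrized integrand is bounded by $\tfrac12 L\|D^2u\|_\infty|h|^{2-s-n-s}=C|h|^{2-2s-n}$. This is integrable on $B_r(0)$ since $2-2s>0$. By dominated convergence the limit as $\varepsilon\downarrow0$ therefore exists and is finite.

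The key step is the near part: combining the second-order cancellation with the local Lipschitz bound on $g$. The only subtlety is making sure the arguments $a,b$ stay in a fixed compact set, so that a single Lipschitz constant applies. This is exactly what the $C^2$ bound on the ball provides, with the factor $|h|^{1-s}\le r^{1-s}$. Together, the two parts show that $(-\Delta_g)^su(x)$ is well defined for every $x\in\Omega$.
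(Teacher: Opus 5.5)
Your proof is correct and follows essentially the same route as the paper. The far field is absolutely convergent by the bound $\|u\|_{L^\infty(\R^n)}$ and the monotonicity of $g$. The near field is symmetrized, and there oddness of $g$, its local Lipschitz bound and the second-order Taylor cancellation give the integrable majorant $C|h|^{2-2s-n}$, so dominated convergence applies. The only differences are cosmetic: you work directly with the central second difference $2u(x)-u(x+h)-u(x-h)$ instead of the paper's linear term plus remainders $h_\pm$, and you explicitly check via $|h|^{1-s}\le r^{1-s}$ that the arguments of $g$ stay in a fixed compact interval, which the paper leaves implicit.
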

\begin{proof}
Fix $x\in\Omega$ and let $\varepsilon_0>0$ be such that $B_{\varepsilon_0}(x)\subset\subset\Omega$. Since $u\in L^\infty(\R^n)$ and $g$ is nondecreasing, for the far field we have
$$
\int_{\R^n\setminus B_{\varepsilon_0}(x)}\big|g\big(D^su(x,y)\big)\big|\,\frac{dy}{|x-y|^{n+s}}
\le g\left(\frac{2\|u\|_{L^\infty(\R^n)}}{\varepsilon_0^{\,s}}\right)\int_{|z|\ge\varepsilon_0}\frac{dz}{|z|^{n+s}}<\infty .
$$
For the singular part, a change of variables and the symmetry of the domain of integration give, for $0<\varepsilon<\varepsilon_0$,
$$
\int_{\varepsilon<|x-y|<\varepsilon_0}g\big(D^su(x,y)\big)\frac{dy}{|x-y|^{n+s}}
=\frac12\int_{\varepsilon<|z|<\varepsilon_0}\Big[g\big(D_+(z)\big)+g\big(D_-(z)\big)\Big]\frac{dz}{|z|^{n+s}},
$$
where $D_\pm(z):=\big(u(x)-u(x\pm z)\big)|z|^{-s}$. Since $u\in C^2(\Omega)$, a second order Taylor expansion yields
$$
D_\pm(z)=\mp\frac{\nabla u(x)\cdot z}{|z|^{s}}+h_\pm(z),\qquad |h_\pm(z)|\le C\,|z|^{2-s},
$$
with $C$ depending on $\|D^2u\|_{L^\infty(B_{\varepsilon_0}(x))}$. Writing $t:=\nabla u(x)\cdot z\,|z|^{-s}$ and using that $g$ is odd and locally Lipschitz, with Lipschitz constant $L$ on the relevant bounded interval, we obtain
$$
\big|g(D_+(z))+g(D_-(z))\big|=\big|g(t+h_-(z))-g(t-h_+(z))\big|\le L\big(|h_+(z)|+|h_-(z)|\big)\le 2LC\,|z|^{2-s} .
$$
Therefore the symmetrized integrand is dominated in absolute value by $LC\,|z|^{2-2s-n}$, which is integrable near the origin:
$$
LC\int_{|z|<\varepsilon_0}|z|^{2-s}\frac{dz}{|z|^{n+s}}=LC\int_{|z|<\varepsilon_0}\frac{dz}{|z|^{n+2s-2}}<\infty,
$$
since $2s-2<0$. Hence, by dominated convergence, the limit in \eqref{glap} exists and is finite.
\end{proof}

\begin{obs}
The local Lipschitz continuity of $g$ is the analogue, in this setting, of the restriction $p\ge2$ in the case $G(t)=t^p$: for $1<p<2$ the function $g(t)=|t|^{p-2}t$ is not Lipschitz near the origin. It is satisfied, for instance, by $G(t)=\frac{|t|^p}{p}+\frac{|t|^q}{q}$ and by $G(t)=|t|^p\log(1+|t|)$, with $p,q\ge2$.
\end{obs}
For the variational setting, the relevant definition of $(-\Delta_g)^s$ is the one obtained by duality, and no regularity of $u$ is needed. Recall that, by \eqref{Liebermann},
\begin{equation}\label{conjugada}
\bar G\big(g(t)\big)=t\,g(t)-G(t)\le (p^+-1)\,G(t),\qquad t\ge0,
\end{equation}
where the identity is the equality case in Young's inequality \eqref{young} and the bound follows from \eqref{Liebermann}; see also \cite[Lemma 2.3]{Salort}, where the estimate $\bar G(g(t))\le p^+G(t)$ is obtained. In particular, $g(D^su)\in L_{\bar G}(\nu_n)$ whenever $u\in W^{s,G}(\R^n)$. Consequently, by H\"older's inequality in Orlicz spaces, the quantity
\begin{equation}\label{debil}
\langle (-\Delta_g)^su,v\rangle:=\iint_{\R^{2n}}g\big(D^su\big)\,D^sv\,d\nu_n
\end{equation}
is finite for every $u,v\in W^{s,G}(\R^n)$, and
$$
\big|\langle (-\Delta_g)^su,v\rangle\big|\le 2\,\big\|g(D^su)\big\|_{\bar G,\nu_n}\,[v]_{s,G}.
$$
Thus $(-\Delta_g)^su$ defines a distribution; more precisely, since $W_A^{s,G}(\Omega)$ is a closed subspace of $W^{s,G}(\R^n)$, \eqref{debil} defines an operator $
(-\Delta_g)^s\colon W_A^{s,G}(\Omega)\to \big(W_A^{s,G}(\Omega)\big)'.$
Moreover, $\langle(-\Delta_g)^su,v\rangle$ is precisely the Gateaux derivative of the modular,
$$
\langle (-\Delta_g)^su,v\rangle=\frac{d}{d\varepsilon}\Phi_{s,G}(u+\varepsilon v)\Big|_{\varepsilon=0},
$$
where the differentiation under the integral sign is justified by \eqref{young}, \eqref{conjugada} and \eqref{LyDelta2}; this is the reason for the normalization chosen in \eqref{glap}. Indeed, if $u$ is as in Lemma \ref{lema:bienbdef} and $v\in C_c^\infty(\Omega)$, then $g(D^su(y,x))=-g(D^su(x,y))$ and the measure $\nu_n$ is symmetric, so that
$$
\iint_{\R^{2n}}g(D^su)\,D^sv\,d\nu_n=2\int_{\R^n}v(x)\left(\int_{\R^n}g\big(D^su(x,y)\big)\frac{dy}{|x-y|^{n+s}}\right)dx=\int_{\R^n}(-\Delta_g)^su\,v\,dx,
$$
and the two definitions agree.

\subsection{The eigenvalue problem with obstacle}

Throughout this subsection, $A\subset\Omega$ is a measurable set and $\mu>0$. The problem associated with the constant $\lambda_{s,\mu}(A)$ is
\begin{equation}\label{problema}
\begin{cases}
(-\Delta_g)^su=\lambda\, g(u) & \text{in } \Omega\setminus A,\\
u=0 & \text{in } A\cup(\R^n\setminus\Omega),\\
\Gcal(u)=\mu,
\end{cases}
\end{equation}
where $\lambda\in\R$ plays the role of an eigenvalue. Observe that, by \eqref{conjugada} applied to $u$ instead of $D^su$, we have $g(u)\in L_{\bar G}(\Omega)$ whenever $u\in L_G(\Omega)$, so that the right-hand side of \eqref{problema} is also well defined in the dual sense.

\begin{defi}\label{def:debil}
We say that a pair $(u,\lambda)\in W_A^{s,G}(\Omega)\times\R$ is a \emph{weak solution} of \eqref{problema} if $\Gcal(u)=\mu$ and
\begin{equation}\label{debilproblema}
\iint_{\R^{2n}}g\big(D^su\big)\,D^sv\,d\nu_n=\lambda\int_\Omega g(u)\,v\,dx\qquad\text{for every } v\in W_A^{s,G}(\Omega).
\end{equation}
\end{defi}

\begin{obs}
Since $W_A^{s,G}(\Omega)$ is a linear subspace, the test functions in \eqref{debilproblema} are allowed to have any sign, so that \eqref{debilproblema} is an equation and not a variational inequality. Note also that the condition $u=0$ in $A\cup(\R^n\setminus\Omega)$ is encoded in the requirement $u\in W_A^{s,G}(\Omega)$, and that no boundary datum has to be prescribed on $\partial\Omega$, the exterior condition playing that role.
\end{obs}
The following result ensures existence of extremals for $\lambda_{s,\mu}(A)$.
For $A=\emptyset$ this is \cite[Theorem 4.2]{Salort}, and for $G(t)=t^p$ it is the analogue of \cite[Theorem 2.11]{FBS}. We include the proof, which is the direct method, both for completeness and because in the presence of an obstacle one has to make sure that the class of admissible functions is nonempty.

\begin{teo}\label{teo:existeA}
Let $0<s<1$ and $\mu>0$. Let $\Omega\subset \R^n$ be open and bounded, and let $A\subsetneq\Omega$ be closed. If \eqref{Liebermann} is fulfilled then there exists an extremal $u\in W_A^{s,G}(\Omega)$ for $\lambda_{s,\mu}(A)$.
\end{teo}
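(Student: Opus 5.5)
The plan is the direct method, preceded by a check that the admissible class is nonempty.

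\emph{Step 1: the admissible class is nonempty.} Since $A$ is closed and $A\subsetneq\Omega$ with $\Omega$ open, the set $\Omega\setminus A$ is open and nonempty, so it contains a ball $B$. Fix $\psi\in C_c^\infty(B)$ with $\psi\not\equiv0$. Then $\psi\in W_A^{s,G}(\Omega)$, since smooth compactly supported functions have finite modular: split $D^s\psi$ into $|x-y|<1$, where one uses the Lipschitz bound together with \eqref{sandwich}, and $|x-y|\ge1$, where one uses boundedness and compact support.

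The map $t\mapsto\Gcal(t\psi)$ is continuous on $[0,\infty)$ and vanishes at $t=0$. It is continuous by dominated convergence, since $G(t\psi)\le G(T\|\psi\|_\infty)$ on the bounded set $\Omega$. It tends to $+\infty$ as $t\to\infty$, because by \eqref{sandwich} $\Gcal(t\psi)\ge t^{p^-}\Gcal(\psi)$ for $t\ge1$ and $\Gcal(\psi)>0$. By the intermediate value theorem there is $t_0>0$ with $\Gcal(t_0\psi)=\mu$. Hence $\lambda_{s,\mu}(A)<\infty$, and it is positive by Proposition \ref{prop:poincare}.

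\emph{Step 2: a bounded minimizing sequence.} Take $u_k\in W_A^{s,G}(\Omega)$ with $\Gcal(u_k)=\mu$ and $\Phi_{s,G}(u_k)\to\lambda_{s,\mu}(A)$. The modulars $\Phi_{s,G}(u_k)$ are bounded, so by Remark \ref{obs:normamodular} the sequence is bounded in $W_A^{s,G}(\Omega)$. By reflexivity (Proposition \ref{prop:espacios}, which applies to the closed subspace $W_A^{s,G}(\Omega)$) we extract a subsequence converging weakly to some $u\in W_A^{s,G}(\Omega)$. The space is weakly closed, so $u$ still vanishes on $A$ and outside $\Omega$. By the compact embedding into $L_G(\Omega)$, also $u_k\to u$ strongly in $L_G(\Omega)$.

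\emph{Step 3: the constraint passes to the limit.} Lemma \ref{lema:convG} gives $G(u_k)\to G(u)$ in $L^1(\Omega)$, hence $\Gcal(u)=\mu$. This is exactly where compactness is needed: weak convergence alone would only give $\Gcal(u)\le\mu$.

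\emph{Step 4: weak lower semicontinuity of the modular.} This is the main technical point. $\Phi_{s,G}$ is convex, because $G$ is convex and $D^s$ is linear. It is also continuous on $W^{s,G}$, since modular and norm convergence are equivalent under \eqref{Liebermann}, as in Remark \ref{obs:normamodular} and Lemma \ref{lema:convG}. A convex, strongly continuous functional is weakly lower semicontinuous by Mazur's lemma. Alternatively, one can avoid this: pass to a further subsequence with $u_k\to u$ a.e., so that $D^su_k\to D^su$ $\nu_n$-a.e., and apply Fatou's lemma.

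Either way $\Phi_{s,G}(u)\le\liminf_k\Phi_{s,G}(u_k)=\lambda_{s,\mu}(A)$. Since $u$ is admissible, the reverse inequality holds, so $u$ is an extremal. The only delicate points are the nonemptiness in Step 1, which is why closedness of $A$ and $A\neq\Omega$ are assumed, and the fact that the constraint is preserved in Step 3.
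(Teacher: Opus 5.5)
Your proposal is correct and follows the paper's proof essentially step by step. Both arguments show the admissible class is nonempty using a bump function supported in a ball contained in $\Omega\setminus A$, then run the direct method: reflexivity, the compact embedding (with Lemma~\ref{lema:convG}) to keep $\Gcal(u)=\mu$, and weak lower semicontinuity of the convex modular. The differences are only cosmetic: you check $\psi\in W^{s,G}(\R^n)$ explicitly and mention the Fatou alternative, which the paper itself uses in the proof of Theorem~\ref{exop}.
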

\begin{proof}
Since $A$ is closed and $A\subsetneq\Omega$, the open set $\Omega\setminus A$ is nonempty and therefore contains a ball $B$; given $\psi\in C_c^\infty(B)$ with $\psi\not\equiv0$, the function $t\mapsto\Gcal(t\psi)$ is continuous, vanishes at $t=0$ and tends to $+\infty$ as $t\to\infty$, so that $\Gcal(t\psi)=\mu$ for some $t>0$ and $t\psi\in W_A^{s,G}(\Omega)$ is admissible. In particular $\lambda_{s,\mu}(A)<\infty$. Let then $\{u_k\}_{k\in \N}\subset W_A^{s,G}(\Omega)$ be a minimizing sequence for $\lambda_{s,\mu}(A)$, that is
$$
\Gcal(u_k)=\mu\quad \text{for every } k\in\N\qquad\text{and}\qquad \lim_{k\to \infty}\Phi_{s,G}(u_k)=\lambda_{s,\mu}(A).
$$
In particular
$$
\Phi_{s,G}(u_k)\leq C\qquad\forall k\in \N,
$$
where $C$ is independent of $k$, so that $\{u_k\}_{k\in\N}$ is bounded in $W_0^{s,G}(\Omega)$ by Remark \ref{obs:normamodular}. Since $W_0^{s,G}(\Omega)$ is reflexive and the embedding $W_0^{s,G}(\Omega)\hookrightarrow L_G(\Omega)$ is compact (Proposition \ref{prop:espacios}), there exist $u\in W_0^{s,G}(\Omega)$ and a subsequence, that we still denote by $\{u_k\}_{k\in\N}$, such that
\begin{align}
u_k&\rightharpoonup u\quad\text{weakly in } W_0^{s,G}(\Omega),\label{convdebil}
\\
u_k&\to u\quad\text{in }L_G(\Omega),\label{convLG}
\\
u_k&\to u\quad\text{a.e. in }\Omega.\label{convctp}
\end{align}
Since $u_k=0$ a.e. in $A$ for every $k$, \eqref{convctp} gives $u=0$ a.e. in $A$, and therefore $u\in W_A^{s,G}(\Omega)$. On the other hand, by \eqref{Liebermann} the functional $\Gcal$ is continuous on $L_G(\Omega)$, so that \eqref{convLG} yields
$$
\Gcal(u)=\lim_{k\to \infty}\Gcal(u_k)=\mu,
$$
and hence $u$ is admissible for $\lambda_{s,\mu}(A)$. Finally, $\Phi_{s,G}$ is convex and strongly lower semicontinuous, hence weakly lower semicontinuous, and \eqref{convdebil} gives
$$
\Phi_{s,G}(u)\leq \liminf_{k\to\infty}\Phi_{s,G}(u_k)=\lambda_{s,\mu}(A)\leq \Phi_{s,G}(u).
$$
Thus $\Phi_{s,G}(u)=\lambda_{s,\mu}(A)$, that is, $u$ is an extremal for $\lambda_{s,\mu}(A)$.
\end{proof}

The following result shows that every extremal of $\lambda_{s,\mu}(A)$ solves \eqref{problema} weakly, for a suitable eigenvalue $\lambda>0$. It is the analogue of \cite[Theorem 2.13]{FBS} and, in the presence of an obstacle, the counterpart of \cite[Theorem 4.2 and Corollary 5.3]{Salort}, where the case $A=\emptyset$ is treated. Since $W_A^{s,G}(\Omega)$ is a closed subspace of $W_0^{s,G}(\Omega)$, the proof is the same; we include it for completeness.

\begin{prop}\label{prop:EL}
Let $0<s<1$ and assume \eqref{Liebermann}. If $u\in W_A^{s,G}(\Omega)$ is an extremal for $\lambda_{s,\mu}(A)$, then there exists $\lambda>0$ such that the pair $(u,\lambda)$ is a weak solution of \eqref{problema} in the sense of Definition \ref{def:debil}. Moreover,
\begin{equation}\label{cotaslambda}
\frac{p^-}{p^+}\cdot \frac{\lambda_{s,\mu}(A)}{\mu}\ \le\ \lambda\ \le\ \frac{p^+}{p^-}\cdot\frac{\lambda_{s,\mu}(A)}{\mu}.
\end{equation}
\end{prop}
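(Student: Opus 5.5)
The plan is to use the Lagrange multiplier rule on the Banach space $X:=W_A^{s,G}(\Omega)$, with $F:=\Phi_{s,G}$ as the functional and $\Gcal$ as the constraint. Both are $C^1$ on $X$. For $F$ the Gateaux derivative was computed in Section \ref{sec:operator}: it is $v\mapsto\langle(-\Delta_g)^su,v\rangle$. For $\Gcal$ the derivative is $v\mapsto\int_\Omega g(u)v\,dx$, and the same justification applies, via \eqref{young}, \eqref{conjugada} and the compact embedding into $L_G(\Omega)$. Continuity of these derivatives in $u$ follows from Lemma \ref{lema:convG} together with the estimate $\bar G(g(t))\le (p^+-1)G(t)$, through a standard Vitali-type argument in $L_{\bar G}$. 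The constraint is nondegenerate at $u$: testing with $v=u$ gives $\langle \Gcal'(u),u\rangle=\int_\Omega g(u)u\,dx\ge p^-\mu>0$ by \eqref{Liebermann}.

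If verifying $C^1$ regularity turns out to be delicate, I would use a more elementary route that needs only Gateaux differentiability. Fix $v\in X$ and consider the two-parameter family $(\varepsilon,t)\mapsto \Gcal(u+\varepsilon v+tu)$. Since $\partial_t$ at $(0,0)$ equals $\int g(u)u\,dx>0$ and the map is $C^1$ in the two real variables, the implicit function theorem in $\R^2$ produces $t(\varepsilon)$ with $t(0)=0$ and $\Gcal(u+\varepsilon v+t(\varepsilon)u)=\mu$. Minimality of $u$ then gives $\frac{d}{d\varepsilon}F(u+\varepsilon v+t(\varepsilon)u)|_{\varepsilon=0}=0$. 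Expanding this yields \eqref{debilproblema} with
$$
\lambda=\frac{\langle(-\Delta_g)^su,u\rangle}{\int_\Omega g(u)u\,dx}.
$$
The one technical point in this route is that the real-variable maps are $C^1$, which follows from dominated convergence using \eqref{sandwich} and the monotonicity of $g$.

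It remains to identify $\lambda$ and bound it, which comes from testing with $v=u$. By \eqref{Liebermann} applied pointwise,
$$
p^-\lambda_{s,\mu}(A)=p^-\Phi_{s,G}(u)\le \iint g(D^su)D^su\,d\nu_n\le p^+\lambda_{s,\mu}(A),
$$
and similarly $p^-\mu\le\int_\Omega g(u)u\,dx\le p^+\mu$. Dividing these gives \eqref{cotaslambda}, and $\lambda>0$ follows since $\lambda_{s,\mu}(A)\ge\mu/C>0$ by Remark \ref{obs:funcional}.

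The main obstacle is justifying differentiability (and continuity of the derivatives) of $F$ and $\Gcal$ along the two-parameter family. I would handle it with the mean value theorem and the bound $|g(a+\theta b)b|\le \bar G(g(|a|+|b|))+G(b)\le C(G(a)+G(b))$, which gives an integrable dominating function.
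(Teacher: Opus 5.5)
Your proposal is correct and follows the paper's proof: you apply the Lagrange multiplier rule on $W_A^{s,G}(\Omega)$, you get nondegeneracy of the constraint from $\langle\Gcal'(u),u\rangle\ge p^-\mu>0$, and you test with $v=u$, using \eqref{Liebermann} on both sides, to obtain \eqref{cotaslambda} and $\lambda>0$. Your fallback via the implicit function theorem in $\R^2$ is also valid; it needs only differentiability along two-parameter families and gives the explicit formula $\lambda=\langle(-\Delta_g)^su,u\rangle/\int_\Omega g(u)u\,dx$ directly.
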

\begin{proof}
By \eqref{Liebermann} the functionals
$$
\Phi_{s,G}\colon W^{s,G}(\R^n)\to\R,\qquad \Gcal\colon L_G(\Omega)\to\R
$$
are of class $C^1$, with
$$
\langle \Phi_{s,G}'(u),v\rangle=\iint_{\R^{2n}}g(D^su)\,D^sv\,d\nu_n,\qquad
\langle \Gcal'(u),v\rangle=\int_\Omega g(u)\,v\,dx ,
$$
both expressions being finite by \eqref{conjugada} and H\"older's inequality in Orlicz spaces.

The function $u$ minimizes $\Phi_{s,G}$ on the set $\{v\in W_A^{s,G}(\Omega)\colon \Gcal(v)=\mu\}$. Moreover, $u$ is a regular point of the constraint: by \eqref{Liebermann},
$$
\langle \Gcal'(u),u\rangle=\int_\Omega g(u)u\,dx\ \ge\ p^-\int_\Omega G(u)\,dx=p^-\mu>0,
$$
so that $\Gcal'(u)\neq0$ in $\big(W_A^{s,G}(\Omega)\big)'$. Since $W_A^{s,G}(\Omega)$ is a Banach space, the Lagrange multiplier rule yields the existence of $\lambda\in\R$ such that $\Phi_{s,G}'(u)=\lambda\,\Gcal'(u)$ in $\big(W_A^{s,G}(\Omega)\big)'$, which is precisely \eqref{debilproblema}. Together with $u\in W_A^{s,G}(\Omega)$ and $\Gcal(u)=\mu$, this says that $u$ is a weak solution of \eqref{problema}.

Finally, taking $v=u$ in \eqref{debilproblema} we get
$$
\iint_{\R^{2n}}g(D^su)\,D^su\,d\nu_n=\lambda\int_\Omega g(u)u\,dx ,
$$
and \eqref{Liebermann}, together with $\Phi_{s,G}(u)=\lambda_{s,\mu}(A)$ and $\Gcal(u)=\mu$, bounds both sides:
$$
p^-\lambda_{s,\mu}(A)\le \iint_{\R^{2n}}g(D^su)\,D^su\,d\nu_n\le p^+\lambda_{s,\mu}(A),
\qquad
0<p^-\mu\le \int_\Omega g(u)u\,dx\le p^+\mu .
$$
Since $\lambda_{s,\mu}(A)>0$ by Remark \ref{obs:funcional}(2), the left--hand side of the identity is positive, and so is $\lambda$; dividing the two chains of inequalities gives \eqref{cotaslambda}.
\end{proof}

\begin{obs}
In contrast with the homogeneous case, the eigenvalue $\lambda$ does not coincide with the constant $\lambda_{s,\mu}(A)$. For $G(t)=|t|^p$ one has $g(t)t=pG(t)$, and \eqref{cotaslambda} becomes the identity
$$
\lambda=\frac{\lambda_{s,\mu}(A)}{\mu},
$$
so that the two constants agree only for $\mu=1$; in that case \eqref{problema} reduces to the eigenvalue equation considered in \cite[Theorem 2.13]{FBS}.
\end{obs}
\subsection{The eigenvalue problem with a potential}
Throughout this subsection, $\sigma>0$ and $\varphi\in L^\infty(\Omega)$ is nonnegative. The problem associated with the constant $\lambda_{s,\mu}(\varphi,\sigma)$ is
\begin{equation}\label{problemapot}
\begin{cases}
(-\Delta_g)^su+\sigma\,g(u)\,\varphi=\lambda\,g(u) & \text{in } \Omega,\\
u=0 & \text{in } \R^n\setminus\Omega,\\
\Gcal(u)=\mu,
\end{cases}
\end{equation}
where $\lambda\in\R$ plays again the role of an eigenvalue. Equivalently, the first equation in \eqref{problemapot} can be written as
$$
(-\Delta_g)^su=(\lambda-\sigma\varphi)\,g(u),
$$
that is, as an eigenvalue problem with the weight $\lambda-\sigma\varphi$; written in this form it is apparent that the penalization pushes the extremals away from the region where $\varphi$ is large.

\begin{defi}\label{def:debilpot}
We say that a pair $(u,\lambda)\in W_0^{s,G}(\Omega)\times\R$ is a \emph{weak solution} of \eqref{problemapot} if $\Gcal(u)=\mu$ and
\begin{equation}\label{debilproblemapot}
\iint_{\R^{2n}}g\big(D^su\big)\,D^sv\,d\nu_n+\sigma\int_\Omega g(u)\,v\,\varphi\,dx=\lambda\int_\Omega g(u)\,v\,dx\qquad\text{for every } v\in W_0^{s,G}(\Omega).
\end{equation}
\end{defi}

All the terms in \eqref{debilproblemapot} are finite: for the first one this was observed in Subsection \ref{subsec:operador}, and for the remaining two it suffices to recall that $g(u)\in L_{\bar G}(\Omega)$ by \eqref{conjugada} and that $\varphi\in L^\infty(\Omega)$. Observe also that, in contrast with \eqref{debilproblema}, the test functions here range over the whole of $W_0^{s,G}(\Omega)$, so that no exceptional set appears.

\begin{obs}\label{obs:existePot}
The existence of an extremal for the constant $\lambda_{s,\mu}(\varphi,\sigma)$ follows with the same argument as in the proof of Theorem \ref{teo:existeA}. The only difference is the additional term, which is continuous with respect to the strong convergence in $L_G(\Omega)$ because $\varphi\in L^\infty(\Omega)$; let us also point out that here the class of admissible functions does not depend on $\varphi$, so that no further hypothesis is needed to guarantee that it is nonempty.
\end{obs}

\begin{teo}\label{teo:existePot}
Let $0<s<1$ and $\mu>0$, let $\Omega\subset\R^n$ be open and bounded, let $\sigma>0$ and let $\varphi\in L^\infty(\Omega)$ be nonnegative. If \eqref{Liebermann} is fulfilled then there exists an extremal $u\in W_0^{s,G}(\Omega)$ for $\lambda_{s,\mu}(\varphi,\sigma)$.
\end{teo}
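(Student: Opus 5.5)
The plan is to apply the direct method exactly as in the proof of Theorem \ref{teo:existeA}, with the functional
$$
J(u):=\Phi_{s,G}(u)+\sigma\int_\Omega G(u)\varphi\,dx
$$
minimized on $\{u\in W_0^{s,G}(\Omega)\colon \Gcal(u)=\mu\}$.

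First, the admissible class is nonempty. Take any $\psi\in C_c^\infty(\Omega)$ with $\psi\not\equiv0$. The map $t\mapsto\Gcal(t\psi)$ is continuous, vanishes at $t=0$ and tends to $+\infty$, so some multiple $t\psi$ satisfies $\Gcal(t\psi)=\mu$. Hence $\lambda_{s,\mu}(\varphi,\sigma)<\infty$; this is also Remark \ref{obs:funcionalsoft}(2).

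Next, take a minimizing sequence $\{u_k\}$ with $\Gcal(u_k)=\mu$ and $J(u_k)\to\lambda_{s,\mu}(\varphi,\sigma)$.
\begin{itemize}
\item Since $\varphi\ge0$, the penalization is nonnegative, so $\Phi_{s,G}(u_k)\le J(u_k)\le C$. By Remark \ref{obs:normamodular}, $\{u_k\}$ is bounded in $W_0^{s,G}(\Omega)$.
\item Reflexivity and the compact embedding of Proposition \ref{prop:espacios} give a subsequence with $u_k\rightharpoonup u$ weakly in $W_0^{s,G}(\Omega)$, $u_k\to u$ in $L_G(\Omega)$, and $u_k\to u$ a.e.
\item Since $W_0^{s,G}(\Omega)$ is weakly closed, $u\in W_0^{s,G}(\Omega)$.
\end{itemize}

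Then I pass to the limit in the constraint and in the penalization. By Lemma \ref{lema:convG}, $G(u_k)\to G(u)$ in $L^1(\Omega)$. This gives $\Gcal(u)=\mu$. Because $\varphi\in L^\infty(\Omega)$, it also gives
$$
\int_\Omega G(u_k)\varphi\,dx\to\int_\Omega G(u)\varphi\,dx .
$$
Finally, $\Phi_{s,G}$ is convex and strongly lower semicontinuous, hence weakly lower semicontinuous. Therefore
$$
J(u)\le\liminf_k J(u_k)=\lambda_{s,\mu}(\varphi,\sigma)\le J(u),
$$
and $u$ is an extremal.

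The only delicate point is the continuity of the penalization term under strong $L_G$ convergence, and Lemma \ref{lema:convG} together with the boundedness of $\varphi$ settles it. Nonemptiness needs no hypothesis on $\varphi$, in contrast with the obstacle case.
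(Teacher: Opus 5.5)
Your proposal is correct and follows the paper's own argument. As in Remark \ref{obs:existePot}, it runs the direct method of Theorem \ref{teo:existeA} and passes the penalization term to the limit using strong $L_G(\Omega)$ convergence, Lemma \ref{lema:convG} and $\varphi\in L^\infty(\Omega)$; the admissible class is nonempty regardless of $\varphi$.
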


As in the case of the obstacle, the extremals solve the associated problem weakly.

\begin{prop}\label{prop:ELpot}
Let $0<s<1$ and assume \eqref{Liebermann}. If $u\in W_0^{s,G}(\Omega)$ is an extremal for $\lambda_{s,\mu}(\varphi,\sigma)$, then there exists $\lambda>0$ such that the pair $(u,\lambda)$ is a weak solution of \eqref{problemapot} in the sense of Definition \ref{def:debilpot}. Moreover,
\begin{equation}\label{cotaslambdapot}
\frac{p^-}{p^+}\cdot \frac{\lambda_{s,\mu}(\varphi,\sigma)}{\mu}\ \le\ \lambda\ \le\ \frac{p^+}{p^-}\cdot\frac{\lambda_{s,\mu}(\varphi,\sigma)}{\mu}.
\end{equation}
\end{prop}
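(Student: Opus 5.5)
The plan is to repeat the proof of Proposition \ref{prop:EL}, now on the space $W_0^{s,G}(\Omega)$ and with the functional
$$
F(v):=\Phi_{s,G}(v)+\sigma\int_\Omega G(v)\varphi\,dx .
$$
By \eqref{Liebermann}, $\Phi_{s,G}$ is of class $C^1$ on $W^{s,G}(\R^n)$ and $\Gcal$ is of class $C^1$ on $L_G(\Omega)$, exactly as in the proof of Proposition \ref{prop:EL}. The penalization term $P(v):=\int_\Omega G(v)\varphi\,dx$ is also $C^1$ on $L_G(\Omega)$, with
$$
\langle P'(v),w\rangle=\int_\Omega g(v)w\varphi\,dx .
$$
Finiteness follows from \eqref{conjugada}, H\"older's inequality in Orlicz spaces and $\varphi\in L^\infty(\Omega)$. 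Differentiability under the integral sign is justified as for $\Gcal$, since $\varphi$ is bounded and nonnegative. Composed with the continuous embedding $W_0^{s,G}(\Omega)\hookrightarrow L_G(\Omega)$, this shows that $F$ is $C^1$ on $W_0^{s,G}(\Omega)$.

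Next, $u$ minimizes $F$ on $\{v\in W_0^{s,G}(\Omega)\colon \Gcal(v)=\mu\}$, and $u$ is a regular point of the constraint. Indeed, by \eqref{Liebermann},
$$
\langle\Gcal'(u),u\rangle\ge p^-\mu>0,
$$
just as before. The Lagrange multiplier rule in the Banach space $W_0^{s,G}(\Omega)$ then gives $\lambda\in\R$ with $F'(u)=\lambda\Gcal'(u)$, and this is exactly \eqref{debilproblemapot}. Together with $\Gcal(u)=\mu$, the pair $(u,\lambda)$ is a weak solution in the sense of Definition \ref{def:debilpot}.

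For the bounds \eqref{cotaslambdapot}, we test \eqref{debilproblemapot} with $v=u$:
$$
\iint_{\R^{2n}} g(D^su)D^su\,d\nu_n+\sigma\int_\Omega g(u)u\varphi\,dx=\lambda\int_\Omega g(u)u\,dx .
$$
Since $\varphi\ge0$ and $\sigma>0$, applying \eqref{Liebermann} pointwise to each integrand gives the following bounds for the left--hand side:
$$
p^-F(u)\le \text{LHS}\le p^+F(u),\qquad F(u)=\lambda_{s,\mu}(\varphi,\sigma).
$$
The right--hand integral satisfies
$$
p^-\mu\le\int_\Omega g(u)u\,dx\le p^+\mu .
$$
By Remark \ref{obs:funcionalsoft}(2), $\lambda_{s,\mu}(\varphi,\sigma)>0$. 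Hence the left--hand side is positive, which forces $\lambda>0$, and dividing the two chains of inequalities yields \eqref{cotaslambdapot}.

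The only step requiring any care is the $C^1$ regularity of the penalization term. Even this reduces to the argument already used for $\Gcal$, because the weight $\varphi$ is bounded and nonnegative, so I do not expect a genuine obstacle.
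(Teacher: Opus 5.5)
Your proposal is correct and follows the paper's own proof. The paper likewise reduces to the argument of Proposition~\ref{prop:EL}: it notes that the penalization term $v\mapsto\int_\Omega G(v)\varphi\,dx$ is $C^1$ on $L_G(\Omega)$ because $\varphi\in L^\infty(\Omega)$, and it obtains \eqref{cotaslambdapot} by testing \eqref{debilproblemapot} with $v=u$ and applying \eqref{Liebermann} to each of the three resulting terms.
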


\begin{proof}
The proof is that of Proposition \ref{prop:EL}. Indeed, by \eqref{Liebermann} the functional $u\mapsto\int_\Omega G(u)\varphi\,dx$ is of class $C^1$ on $L_G(\Omega)$, with derivative $v\mapsto\int_\Omega g(u)\,v\,\varphi\,dx$, since $\varphi\in L^\infty(\Omega)$; and \eqref{cotaslambdapot} follows by testing \eqref{debilproblemapot} with $v=u$ and using \eqref{Liebermann} in the three resulting terms, which gives
$$
p^-\lambda_{s,\mu}(\varphi,\sigma)\le \lambda\int_\Omega g(u)\,u\,dx\le p^+\lambda_{s,\mu}(\varphi,\sigma),\qquad p^-\mu\le \int_\Omega g(u)\,u\,dx\le p^+\mu .
$$
\end{proof}

\section{The optimal design problems}\label{sec:optimal}
Throughout this section we assume that $\Omega\subset\R^n$ is open and bounded and that $G$ satisfies \eqref{Liebermann}. In particular, the embedding $W_0^{s,G}(\Omega)\hookrightarrow L_G(\Omega)$ is compact and modular and norm convergence are equivalent.

\subsection{Optimization for the $(s,G)$--hard obstacle problem}\label{subsec:hard}

We begin this subsection by showing that the extremals of $\lambda_{s,\mu}(A)$ have constant sign, which allows us to deal only with nonnegative extremals in what follows. We then give a characterization of the constant $\Lambda_{s,\mu}(\alpha)$, prove the existence of an optimal configuration, and finally show that any optimal set coincides, up to a null set, with the set where the corresponding extremal vanishes.

The argument in the next lemma is the same as in \cite[Lemma 3.1]{FBS}; we include it for completeness, since it does not use \eqref{Liebermann}.
\begin{lema}\label{lema:signo}
Let $0<s<1$, let $\Omega\subset\R^n$ be open and bounded, let $A\subset\Omega$ be a measurable set with $|A|>0$ and let $\mu>0$. If $u\in W_A^{s,G}(\Omega)$ is an extremal for $\lambda_{s,\mu}(A)$, then $|u|$ is also an extremal and either $u\geq 0$ a.e. in $\Omega$ or $u\leq 0$ a.e. in $\Omega$.
\end{lema}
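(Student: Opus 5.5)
First I would show that $|u|$ is an extremal. Clearly $|u|\in W_A^{s,G}(\Omega)$: it vanishes a.e. in $A\cup(\R^n\setminus\Omega)$. Since $G$ is even, $\Gcal(|u|)=\Gcal(u)=\mu$, so $|u|$ is admissible. The reverse triangle inequality gives $\big||u(x)|-|u(y)|\big|\le|u(x)-u(y)|$. Because $G$ is even and nondecreasing on $[0,\infty)$, this yields
$$
G(D^s|u|)\le G(D^su)\qquad\text{pointwise on }\R^{2n}\setminus\Delta .
$$
Hence $\Phi_{s,G}(|u|)\le\Phi_{s,G}(u)=\lambda_{s,\mu}(A)$, and $|u|$ is an extremal. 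As a byproduct, equality of the two (finite) integrals together with the pointwise inequality forces
$$
G\big(D^s|u|\big)=G\big(D^su\big)\qquad\nu_n\text{-a.e.}
$$
The only hypothesis on $G$ needed here is that it is a Young function.

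Next I would exploit this equality. Since $g(t)>0$ for $t>0$, $G$ is strictly increasing on $[0,\infty)$. Therefore $\big||u(x)|-|u(y)|\big|=|u(x)-u(y)|$ for a.e. $(x,y)\in\R^{2n}$, because $\nu_n$ and Lebesgue measure on $\R^{2n}$ have the same null sets. Equality in the reverse triangle inequality means that $u(x)$ and $u(y)$ do not have strictly opposite signs. Set
$$
P=\{u>0\},\qquad N=\{u<0\}.
$$
Then $|P\times N|=|P|\,|N|=0$, so either $|P|=0$ or $|N|=0$. That is, $u\ge0$ a.e. or $u\le0$ a.e., in $\Omega$ and indeed in $\R^n$.

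The argument uses no Euler--Lagrange equation and no strong minimum principle, which is consistent with the remark that \eqref{Liebermann} is not needed. The hypothesis $|A|>0$ plays no role beyond making the setting meaningful. The only delicate point is the strictness step: one must observe that $G$ is strictly increasing on $[0,\infty)$, which holds because $g$ vanishes only at $0$. One must also make sure the argument is legitimate when $\lambda_{s,\mu}(A)<\infty$; this is automatic, since an extremal is assumed to exist, so the integrals are finite and subtracting them is justified.
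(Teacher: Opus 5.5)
Your proof is correct and is essentially the paper's argument: both rest on the pointwise inequality $G(D^s|u|)\le G(D^su)$, the strict monotonicity of $G$ on $[0,\infty)$, and the fact that $P\times N$ is not null when $|P|,|N|>0$. The only difference is cosmetic. The paper argues by contradiction, using the strict inequality on $P\times N$ together with $\nu_n(P\times N)\ge\operatorname{diam}(\Omega)^{-n}|P||N|>0$, whereas you first deduce $G(D^s|u|)=G(D^su)$ $\nu_n$-a.e.\ from the equality of the finite modulars and then conclude $|P|\,|N|=0$ directly.
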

\begin{proof}
Since $\big||a|-|b|\big|\le|a-b|$ and $G$ is even and nondecreasing on $[0,\infty)$, we have $G(D^s|u|)\le G(D^su)$ for a.e. $(x,y)\in\R^{2n}\setminus\Delta$. Hence $|u|\in W_A^{s,G}(\Omega)$ and, $G$ being even, $\Gcal(|u|)=\Gcal(u)=\mu$, and
$$
\iint_{\R^{2n}}G(D^s|u|)\,d\nu_n\le\iint_{\R^{2n}}G(D^su)\,d\nu_n=\lambda_{s,\mu}(A)<\infty,
$$
so $|u|$ is an extremal. Assume now that $P=\{u>0\}$ and $N=\{u<0\}$ both have positive measure; observe that $P,N\subset\Omega$, since $u=0$ a.e. in $\R^n\setminus\Omega$. For $(x,y)\in P\times N$ we have $\big||u(x)|-|u(y)|\big|<|u(x)-u(y)|$, and since $g>0$ on $(0,\infty)$, $G$ is strictly increasing on $[0,\infty)$, so the pointwise inequality above is strict on $P\times N$. As $\nu_n(P\times N)\ge \operatorname{diam}(\Omega)^{-n}|P||N|>0$ and the right-hand side is finite, we get
$$
\iint_{\R^{2n}}G(D^s|u|)\,d\nu_n<\lambda_{s,\mu}(A),
$$
a contradiction.
\end{proof}
The following lemma characterizes the $(s,G)$--hard obstacle problem, and is the counterpart of \cite[Lemma 3.2]{FBS}.
\begin{lema}\label{lema:caract}
Let $\alpha\in (0,1)$ and $\mu>0$. Then
\begin{equation}\label{caractLambda_s}
\Lambda_{s,\mu}(\alpha)=\inf \left \{\iint_{\R^{2n}}G(D^s u)\,d\nu_n\colon u\in W_0^{s,G}(\Omega),\int_\Omega G(u)\,dx=\mu,|\{u=0\}\cap\Omega|\geq\alpha|\Omega| \right \}.
\end{equation}
\end{lema}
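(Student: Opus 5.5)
Denote by $I$ the infimum on the right-hand side of \eqref{caractLambda_s}. The plan is to prove the two inequalities $I\le\Lambda_{s,\mu}(\alpha)$ and $\Lambda_{s,\mu}(\alpha)\le I$ separately. Each follows from passing between the two admissible classes: functions vanishing on an obstacle of measure $\alpha|\Omega|$, and functions whose zero set in $\Omega$ has measure at least $\alpha|\Omega|$. The convention $\inf\emptyset=+\infty$ is used on both sides, so nothing needs to be said about finiteness.

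\emph{Step 1: $I\le\Lambda_{s,\mu}(\alpha)$.} Fix $A\in\mathcal A_\alpha$ and let $u\in W_A^{s,G}(\Omega)$ with $\Gcal(u)=\mu$; if no such $u$ exists, $\lambda_{s,\mu}(A)=+\infty$ and there is nothing to prove. Since $u=0$ a.e. in $A$, we have $A\subset\{u=0\}\cap\Omega$ up to a null set. Hence $|\{u=0\}\cap\Omega|\ge|A|=\alpha|\Omega|$, and $u$ is admissible for $I$. Therefore $I\le\Phi_{s,G}(u)$. Taking the infimum over $u$ gives $I\le\lambda_{s,\mu}(A)$, and then taking the infimum over $A\in\mathcal A_\alpha$ gives $I\le\Lambda_{s,\mu}(\alpha)$.

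\emph{Step 2: $\Lambda_{s,\mu}(\alpha)\le I$.} Let $u\in W_0^{s,G}(\Omega)$ with $\Gcal(u)=\mu$ and $|Z|\ge\alpha|\Omega|$, where $Z:=\{u=0\}\cap\Omega$ (fixing a representative of $u$). We need a measurable set $A\subset Z$ with $|A|=\alpha|\Omega|$. For this, consider the function $r\mapsto|Z\cap B_r(0)|$. It is continuous and nondecreasing in $r$, it vanishes at $r=0$, and it equals $|Z|$ for $r$ large, because $\Omega$ is bounded. By the intermediate value theorem (Lebesgue measure has no atoms) there is $r$ with $|Z\cap B_r(0)|=\alpha|\Omega|$, and we set $A:=Z\cap B_r(0)$. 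Then $A\in\mathcal A_\alpha$ and $u=0$ on $A$, so $u\in W_A^{s,G}(\Omega)$. Consequently $\Lambda_{s,\mu}(\alpha)\le\lambda_{s,\mu}(A)\le\Phi_{s,G}(u)$, and taking the infimum over such $u$ gives $\Lambda_{s,\mu}(\alpha)\le I$.

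The only step needing any care is the extraction of a subset of exact measure $\alpha|\Omega|$ in Step 2, which the continuity argument above settles. The independence of the choice of representative of $u$ is covered by Remark \ref{obs:funcional}(1). No use of \eqref{Liebermann} or of existence of extremals is needed.
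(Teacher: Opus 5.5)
Your proposal is correct and follows the paper's proof essentially verbatim: the inclusion of admissible classes gives one inequality, and the continuity of $r\mapsto|Z\cap B_r(0)|$ together with the intermediate value theorem produces an obstacle $A\subset Z$ with $|A|=\alpha|\Omega|$ for the other. One small correction: that continuity follows from $|Z\cap(B_{r'}\setminus B_r)|\le|B_{r'}\setminus B_r|\to0$, not from Lebesgue measure having no atoms.
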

\begin{proof}
Let us denote by $\tilde\Lambda_{s,\mu}(\alpha)$ the right-hand side of \eqref{caractLambda_s}.

Let $A\in\mathcal{A}_\alpha$ and let $\varepsilon>0$. We may assume that $\lambda_{s,\mu}(A)<\infty$, since otherwise the inequality below is trivial. By the definition of $\lambda_{s,\mu}(A)$ there exists $u\in W_A^{s,G}(\Omega)$ with $\Gcal(u)=\mu$ and
$$
\iint_{\R^{2n}}G(D^su)\,d\nu_n\le \lambda_{s,\mu}(A)+\varepsilon .
$$
Since $u=0$ a.e. in $A$, we have $|\{u=0\}\cap\Omega|\ge|A|=\alpha|\Omega|$, so that $u$ is admissible in \eqref{caractLambda_s} and therefore $\tilde\Lambda_{s,\mu}(\alpha)\le\lambda_{s,\mu}(A)+\varepsilon$. Letting $\varepsilon\to 0$ and taking the infimum over $A\in\mathcal{A}_\alpha$ we obtain
\begin{equation}\label{caract1}
\tilde\Lambda_{s,\mu}(\alpha)\le \Lambda_{s,\mu}(\alpha).
\end{equation}
Conversely, let $v\in W_0^{s,G}(\Omega)$ be such that $\Gcal(v)=\mu$ and $|\{v=0\}\cap\Omega|\ge\alpha|\Omega|$, and write $E:=\{v=0\}\cap\Omega$. The function $f(t):=|E\cap B_t(0)|$ is nondecreasing and continuous, $f(0)=0$, and $f(t)=|E|\ge\alpha|\Omega|$ for $t$ large enough, since $\Omega$ is bounded. Hence, by the intermediate value theorem, there exists $t>0$ such that the set $A:=E\cap B_t(0)$ satisfies $|A|=\alpha|\Omega|$, that is, $A\in\mathcal{A}_\alpha$. Since $v=0$ a.e. in $A$, we have $v\in W_A^{s,G}(\Omega)$, and $\Gcal(v)=\mu$, so that
$$
\Lambda_{s,\mu}(\alpha)\le\lambda_{s,\mu}(A)\le \iint_{\R^{2n}}G(D^sv)\,d\nu_n .
$$
Taking the infimum over all such $v$ we get $\Lambda_{s,\mu}(\alpha)\le\tilde\Lambda_{s,\mu}(\alpha)$, which together with \eqref{caract1} proves the lemma.
\end{proof}
\begin{obs}
Observe that the above argument does not use any homogeneity of $G$: the constraint $\Gcal(u)=\mu$ is preserved by both inequalities, and no rescaling of the admissible functions is needed. This is in contrast with the case $G(t)=t^p$, where the corresponding characterization, see \cite[Lemma 3.2]{FBS}, is obtained after normalizing $\|u\|_{L^p(\Omega)}=1$.
\end{obs}
We have the following existence of optimal configuration result, whose proof is analogous to that of \cite[Theorem 3.3]{FBS}.

\begin{teo}\label{exop}
Let $\alpha\in (0,1)$ and $\mu>0$. Then there exist
\begin{enumerate}
\item a measurable set $A \subset \Omega$ such that $|A|=\alpha |\Omega|$, and
$$
\Lambda_{s,\mu}(\alpha)=\lambda_{s,\mu}(A).
$$
\item a function $u\in W_0^{s,G}(\Omega)$ with $|\{u=0\}\cap \Omega|\geq \alpha |\Omega|$ and $\Gcal(u)=\mu$ such that
$$
\Lambda_{s,\mu}(\alpha)=\iint_{\R^{2n}}G(D^s u)\,d\nu_n.
$$
\end{enumerate}
\end{teo}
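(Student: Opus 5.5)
We prove (2) first, by the direct method applied to the characterization of Lemma \ref{lema:caract}, and then deduce (1) from it by the same cutting argument used in the proof of that lemma.

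\emph{Minimizing sequence.} Take a minimizing sequence $\{u_k\}\subset W_0^{s,G}(\Omega)$ for the right-hand side of \eqref{caractLambda_s}, so that $\Gcal(u_k)=\mu$, $|\{u_k=0\}\cap\Omega|\ge\alpha|\Omega|$ and $\Phi_{s,G}(u_k)\to\Lambda_{s,\mu}(\alpha)$. This value is finite. Indeed, take any closed set $A\subsetneq\Omega$ with $|A|=\alpha|\Omega|$, for instance a suitable compact set inside $\Omega$ minus a small ball; by Theorem \ref{teo:existeA}, $\lambda_{s,\mu}(A)<\infty$.

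\emph{Compactness and constraints.} The modulars are bounded, so by Remark \ref{obs:normamodular} the sequence is bounded in $W_0^{s,G}(\Omega)$. Reflexivity and the compact embedding of Proposition \ref{prop:espacios} then give a subsequence and a limit $u\in W_0^{s,G}(\Omega)$ with $u_k\rightharpoonup u$ weakly, $u_k\to u$ in $L_G(\Omega)$ and $u_k\to u$ a.e. By Lemma \ref{lema:convG}, $\Gcal(u)=\lim\Gcal(u_k)=\mu$. Weak lower semicontinuity of the convex, strongly continuous modular gives $\Phi_{s,G}(u)\le\Lambda_{s,\mu}(\alpha)$.

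\emph{Zero-set condition.} This is the step needing care, since the zero set is not obviously preserved under convergence. We use the pointwise bound $\chi_{\{u=0\}}(x)\ge\limsup_k\chi_{\{u_k=0\}}(x)$ at every $x$ where $u_k(x)\to u(x)$. Indeed, if $u_k(x)=0$ for infinitely many $k$, then the limit satisfies $u(x)=0$. Applying Fatou's lemma to $1-\chi_{\{u_k=0\}}$ on the finite-measure set $\Omega$ (equivalently, reverse Fatou) yields
$$
|\{u=0\}\cap\Omega|\ge\int_\Omega\limsup_k\chi_{\{u_k=0\}}\,dx\ge\limsup_k|\{u_k=0\}\cap\Omega|\ge\alpha|\Omega| .
$$
Hence $u$ is admissible in \eqref{caractLambda_s}, which gives $\Phi_{s,G}(u)\ge\Lambda_{s,\mu}(\alpha)$. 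Combined with the previous bound, $\Phi_{s,G}(u)=\Lambda_{s,\mu}(\alpha)$, and (2) is proved.

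\emph{Part (1).} Let $E=\{u=0\}\cap\Omega$. As in the proof of Lemma \ref{lema:caract}, the function $t\mapsto|E\cap B_t(0)|$ is continuous, so we can choose $t$ with $A:=E\cap B_t(0)$ satisfying $|A|=\alpha|\Omega|$. Then $u\in W_A^{s,G}(\Omega)$ and $\Gcal(u)=\mu$, so
$$
\Lambda_{s,\mu}(\alpha)\le\lambda_{s,\mu}(A)\le\Phi_{s,G}(u)=\Lambda_{s,\mu}(\alpha).
$$
Thus $A$ is an optimal configuration. The only genuinely delicate points are the upper semicontinuity of the measure of the zero sets, handled above via a.e. convergence and Fatou, and the finiteness of $\Lambda_{s,\mu}(\alpha)$, handled by exhibiting a closed admissible obstacle.
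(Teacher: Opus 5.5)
Your proof is correct and follows essentially the same route as the paper: the direct method applied to the characterization of Lemma \ref{lema:caract}, upper semicontinuity of $|\{u_k=0\}\cap\Omega|$ via a.e.\ convergence and (reverse) Fatou, and then the cutting argument $A=E\cap B_t(0)$ to pass from (2) to (1). The differences are minor. The paper gets $\Phi_{s,G}(u)\le\liminf_k\Phi_{s,G}(u_k)$ from Fatou's lemma and the a.e.\ convergence of $D^su_k$ rather than from weak lower semicontinuity, and it leaves the finiteness of $\Lambda_{s,\mu}(\alpha)$ implicit, whereas you check it explicitly.
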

\begin{proof}
We first observe that (2) implies (1). Indeed, let $u$ be as in (2). Arguing as in the proof of Lemma \ref{lema:caract}, there exists a measurable set $A\subset \{u=0\}\cap \Omega$ such that $|A|=\alpha |\Omega|$, and hence
$$
\Lambda_{s,\mu}(\alpha)\leq \lambda_{s,\mu}(A)\leq \iint_{\R^{2n}}G(D^s u)\,d\nu_n=\Lambda_{s,\mu}(\alpha),
$$
so that $\Lambda_{s,\mu}(\alpha)=\lambda_{s,\mu}(A)$. Therefore it suffices to prove (2).

Let $\{u_i\}_{i\in \N}$ be a minimizing sequence for $\Lambda_{s,\mu}(\alpha)$. By Lemma \ref{lema:caract} we may assume that $\{u_i\}_{i\in\N}\subset W_0^{s,G}(\Omega)$, $\Gcal(u_i)=\mu$ and $|\{u_i=0\}\cap\Omega|\geq \alpha|\Omega|$ for every $i\in\N$, and
$$
\Lambda_{s,\mu}(\alpha)=\lim_{i\to\infty} \iint_{\R^{2n}}G(D^s u_i)\,d\nu_n,
$$
therefore
$$
\iint_{\R^{2n}}G(D^s u_i)\,d\nu_n\leq C\qquad \forall i\in \N,
$$
where $C>0$ does not depend on $i$, and so, by Remark \ref{obs:normamodular},
$$
\|u_i\|_{s,G}\leq \max\{1,\mu\}+\max\{1,C\}\qquad i\in\N,
$$
therefore, by the reflexivity of the space $W_0^{s,G}(\Omega)$, see Proposition \ref{prop:espacios}, there exists a function $u$ and a subsequence, that we still call $\{u_i\}_{i\in\N}$, such that $u_i\rightharpoonup u$ weakly in $W^{s,G}(\R^n)$; since $W_0^{s,G}(\Omega)$ is a closed, and hence weakly closed, subspace, we have $u\in W_0^{s,G}(\Omega)$. Moreover, by the compactness of the embedding $W_0^{s,G}(\Omega)\hookrightarrow L_G(\Omega)$, we may also assume that $u_i\to u$ in $L_G(\Omega)$ and $u_i\to u$ a.e. in $\Omega$ as $i\to\infty$. Then
$$
\int_\Omega G(u)\,dx=\lim_{i\to \infty}\int_\Omega G(u_i)\,dx=\mu,
$$
where we have used that convergence in $L_G(\Omega)$ implies convergence of the modulars, see Lemma \ref{lema:convG}. Moreover, since $u_i\to u$ a.e. in $\Omega$ we have $\{u\neq 0\}\subset \liminf_{i\to\infty}\{u_i\neq 0\}$ up to a null set, so that Fatou's lemma applied to $\chi_{\{u_i\neq 0\}}$ gives $|\{u\neq 0\}\cap\Omega|\leq \liminf_{i\to\infty}|\{u_i\neq 0\}\cap\Omega|$, that is, $\limsup_{i\to \infty} |\{u_i=0\}\cap\Omega|\leq |\{u=0\}\cap\Omega|$. Hence $|\{u=0\}\cap\Omega|\geq \alpha |\Omega|$ and so by Lemma \ref{lema:caract}
$$
\Lambda_{s,\mu}(\alpha)\leq \iint_{\R^{2n}}G(D^s u)\,d\nu_n.
$$
Finally, since all these functions vanish outside $\Omega$, we have $u_i\to u$ a.e. in $\R^n$ and therefore $D^su_i\to D^su$ a.e. in $\R^{2n}\setminus\Delta$, so that Fatou's lemma gives
$$
\iint_{\R^{2n}}G(D^s u)\,d\nu_n=\Phi_{s,G}(u)\leq \liminf_{i\to \infty}\Phi_{s,G}(u_i)=\Lambda_{s,\mu}(\alpha).
$$
Both inequalities together give $\Phi_{s,G}(u)=\Lambda_{s,\mu}(\alpha)$, that is, $u$ is an extremal for $\Lambda_{s,\mu}(\alpha)$.
\end{proof}
We now present the main result of this section: the optimal set given by Theorem \ref{exop} coincides, up to a null set, with the set where the corresponding extremal vanishes. We first show that the zero set of an extremal has exactly the prescribed measure, and then deduce the identification in Corollary \ref{coro:identif}.
\begin{teo}\label{teo:identif}
With the same hypotheses as in Theorem \ref{exop}, let $u\in W_0^{s,G}(\Omega)$ be an extremal for $\Lambda_{s,\mu}(\alpha)$. Then
$$
|\{u=0\}\cap\Omega|=\alpha|\Omega|.
$$
\end{teo}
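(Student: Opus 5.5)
Since $u$ is admissible in \eqref{caractLambda_s}, we already know $|\{u=0\}\cap\Omega|\ge\alpha|\Omega|$; the task is to exclude strict inequality. I would argue by contradiction, assuming $|\{u=0\}\cap\Omega|>\alpha|\Omega|$, and derive an Euler--Lagrange equation with no obstacle in the region where $u$ is not yet vanishing.

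\emph{Reduction to $u\ge0$.} First replace $u$ by $|u|$. We have $\Phi_{s,G}(|u|)\le\Phi_{s,G}(u)$ and $\Gcal(|u|)=\mu$, and the zero sets of $u$ and $|u|$ coincide. Hence $|u|$ is again an extremal, and it suffices to treat $u\ge0$. (Arguing as in Lemma \ref{lema:signo}, $u$ has constant sign anyway.)

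\emph{Step 1: the free equation.} Pick $\delta>0$ and set $E_\delta:=\{0<u<\delta\}\cap\Omega$. These sets decrease to the empty set as $\delta\to0^+$, so $|E_\delta|\to0$. Choose $\delta$ with $|E_\delta|<|\{u=0\}\cap\Omega|-\alpha|\Omega|$. I claim that for every $v\in W_0^{s,G}(\Omega)$ supported (a.e.) in $\{u=0\}\cup\{u>0\}$ outside a small set, and in particular for every $v\in W_{Z}^{s,G}(\Omega)$ with $Z$ a measurable subset of $\{u=0\}$ of measure at least $\alpha|\Omega|$, the perturbation $u+\varepsilon v$ is admissible in \eqref{caractLambda_s} after renormalizing. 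The cleanest choice is this: take any $v\in W_0^{s,G}(\Omega)$ and $Z\subset\{u=0\}\cap\Omega$ with $|Z|=\alpha|\Omega|$. Then $u$ is an extremal for $\lambda_{s,\mu}(Z)$, because $\Lambda_{s,\mu}(\alpha)\le\lambda_{s,\mu}(Z)\le\Phi_{s,G}(u)=\Lambda_{s,\mu}(\alpha)$. By Proposition \ref{prop:EL} there is $\lambda>0$ with \eqref{debilproblema} holding for all $v\in W_Z^{s,G}(\Omega)$.

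\emph{Step 2: varying $Z$.} The multiplier does not depend on $Z$: testing with $v=u$ gives $\lambda=\iint g(D^su)D^su\,d\nu_n/\int g(u)u$. Now let $F:=(\{u=0\}\cap\Omega)$ and assume $|F|>\alpha|\Omega|$. Write $F$ as the union of two sets $Z_1,Z_2\subset F$, each of measure $\ge\alpha|\Omega|$, with $|Z_1\cap Z_2|$ as small as possible; if $|F|\ge2\alpha|\Omega|$ they can be disjoint. In general I would instead cover $F$ by finitely many sets $Z_j\subset F$ with $|Z_j|=\alpha|\Omega|$ and $\bigcap_j Z_j$ null. This is possible because $|F|>\alpha|\Omega|$: slide a window along $F$, using continuity of $t\mapsto|F\cap B_t|$ and of its analogues as in Lemma \ref{lema:caract}. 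Any $v\in W_0^{s,G}(\Omega)$ that vanishes on $\Omega\setminus F$ outside small pieces can then be split with a partition subordinate to the complements $\Omega\setminus Z_j$. Splitting a general $v$ in $W^{s,G}$ this way is delicate, so I would instead test directly with nonnegative $v$ supported where convenient.

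\emph{Step 3: the contradiction.} The concrete test I would use is $v:=(\delta-u)^+\chi_\Omega$-type truncations, namely $v_\delta:=\min\{u,\delta\}-\delta$ restricted suitably. More simply, take $v=\chi$-free functions $w_\delta:=(\delta-u)^+$ multiplied by a cutoff. The aim is to obtain, for a nonnegative $w$ that is positive on a positive-measure part of $F$, the inequality
\[
\iint g(D^su)D^sw\,d\nu_n=\lambda\int_\Omega g(u)w\,dx .
\]
For such $w$ the right-hand side vanishes on $F$, while the left-hand side is strictly negative. The left-hand side is strictly negative because, for $x\in F$ with $w(x)>0$ and $y$ with $u(y)>0$, we have $g(D^su)D^sw<0$ (we pair $u(x)-u(y)<0$ with $w(x)-w(y)>0$ when $w$ is decreasing in $u$), and $\{u>0\}$ has positive measure since $\mu>0$. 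The layer $E_\delta$, where both sides may have mixed contributions, has measure tending to $0$. Letting $\delta\to0^+$, its contribution to the right side is $\lambda\int_{E_\delta}g(u)w\le\lambda g(\delta)\delta|E_\delta|\to0$. Its contribution to the left side is controlled by H\"older and \eqref{conjugada}, and the strictly negative part should persist. The main obstacle is justifying that such a $w$ (nonzero on part of $F$) is an admissible test function. This is exactly where $|F|>\alpha|\Omega|$ enters: choose $Z\subset F$ with $|Z|=\alpha|\Omega|$ and $|F\setminus Z|>0$, and take $w=(\delta-u)^+\eta$ with $\eta$ vanishing on $Z$. This requires an $\eta$ that keeps $w\in W_0^{s,G}(\Omega)$. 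I would first try $\eta=\chi_{\Omega\setminus Z}$ replaced by $(\delta-u)^+$ times the indicator of a ball meeting $F\setminus Z$, choosing $Z$ to avoid that ball, and checking the Orlicz seminorm via \eqref{sandwich}. This regularity-of-test-function issue is the step I expect to be hardest.
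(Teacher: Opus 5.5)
\textbf{The strategy is right, but the proof has two genuine gaps.} You fix $Z\subset\{u=0\}\cap\Omega$ with $|Z|=\alpha|\Omega|$, observe that $u$ is an extremal for $\lambda_{s,\mu}(Z)$, and test the Euler--Lagrange equation with $(\delta-u)^+$ localized near a part of the zero set that $Z$ avoids. That is exactly the paper's approach. However, the two points you leave open are where the proof actually lives, and as proposed both fail.

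\emph{The cutoff.} The test function $w=(\delta-u)^+\chi_B$ is in general not admissible. If $u$ vanishes near a portion of $\partial B$, then $w$ jumps by $\delta$ there, and by \eqref{sandwich} $\Phi_{s,G}(\delta\chi_B)=\infty$ as soon as $sp^-\ge1$. The indicator also creates pairs $x\in B$, $y\notin B$, $0\le u(y)<u(x)<\delta$ with positive integrand, which you cannot control. You need instead a smooth $\psi\in C_c^\infty(B_r(x_0))$ with $0\le\psi\le1$ and $\psi\equiv1$ on $B_{r/2}(x_0)$. Here $x_0$ is a density point of $\{u=0\}\cap\Omega$ and $|B_r(x_0)|<|\{u=0\}\cap\Omega|-\alpha|\Omega|$, so $Z$ can be chosen inside $(\{u=0\}\cap\Omega)\setminus B_r(x_0)$. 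Then $|D^s((\delta-u)^+\psi)|\le\delta|D^s\psi|+|D^su|$, and admissibility follows from $\Delta_2$.

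\emph{The passage to the limit.} This is the more serious gap. Since $0\le w\le\delta$, every term in the equation is $O(\delta)$. In particular the ``strictly negative part'' does not persist as $\delta\to0^+$; it tends to $0$ too. You must compare at scale $\delta$: a gain $\le-K\delta$ with $K>0$ independent of $\delta$, against errors that are $o(\delta)$.

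The gain comes from pairs in $(\{u=0\}\cap B_{r/2}(x_0))\times(\{u\ge\theta\}\setminus B_r(x_0))$ for a fixed $\theta>\delta$. For instance $\theta=G^{-1}(\mu/(2|\Omega|))$ guarantees $|\{u\ge\theta\}|>0$. On those pairs the integrand is at most $-\delta\, g(\theta d^{-s})d^{-s}$, with $d=\operatorname{diam}\Omega$.

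Your bound $\lambda g(\delta)\delta|E_\delta|$ handles the right-hand side. The error on the left, however, cannot be handled by H\"older. The quotient $D^sw$ contains $\psi(y)D^s(\delta-u)^+$, which on the layer has size $|D^su|$, not $\delta$, so H\"older only yields $o(1)$. The missing ingredient is the pointwise inequality, valid for $u(x)<u(y)<\delta$:
\[
w(y)-w(x)=(\delta-u(y))\big(\psi(y)-\psi(x)\big)+\big(u(x)-u(y)\big)\psi(x)\le(\delta-u(y))\big(\psi(y)-\psi(x)\big).
\]
It shows that $g(D^su)\,D^sw\le0$ outside $\{u<\delta\}\times\{u<\delta\}$, and $g(D^su)\,D^sw\le\delta\,|g(D^su)|\,|D^s\psi|$ on that set. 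As $\{u<\delta\}\times\{u<\delta\}$ decreases to $\{u=0\}\times\{u=0\}$, where $g(D^su)=0$, dominated convergence gives an error $\delta\,\varepsilon(\delta)$ with $\varepsilon(\delta)\to0$. Hence $0\le\lambda\int_\Omega g(u)\,w\,dx\le\delta(\varepsilon(\delta)-K)<0$ for small $\delta$, which is the desired contradiction.
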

\begin{proof}
By Lemma \ref{lema:signo} we may assume that $u\ge 0$. Suppose, arguing by contradiction, that $|\{u=0\}\cap\Omega|>\alpha|\Omega|$, and set
$$
\theta:=G^{-1}\left(\frac{\mu}{2|\Omega|}\right),\qquad F:=\{u\ge\theta\},
$$
so that $|F|>0$. Indeed, otherwise $\mu=\int_\Omega G(u)\,dx\le G(\theta)|\Omega|=\mu/2$, a contradiction.

Let $x_0$ be a point of density $1$ of $\{u=0\}\cap\Omega$, which exists by the Lebesgue density theorem, and let $r>0$ be small enough so that $B_r(x_0)\subset\subset\Omega$ and
$$
\big|\{u=0\}\cap B_{r/2}(x_0)\big|\ge\tfrac12\big|B_{r/2}(x_0)\big|,\qquad
|F\setminus B_r(x_0)|>0,\qquad
\big|B_r(x_0)\big|<\big|\{u=0\}\cap\Omega\big|-\alpha|\Omega| .
$$
We define
$$
A:=\big(\{u=0\}\cap\Omega\big)\setminus B_r(x_0),
$$
so that $|A|>\alpha|\Omega|$ by the last condition above.

Let $\psi\in C_c^\infty(B_r(x_0))$ be a cut-off function such that $0\le\psi\le1$ and $\psi\equiv1$ in $B_{r/2}(x_0)$, and for $0<\delta<\theta/2$ consider
$$
\varphi_\delta:=(\delta-u)_+\,\psi .
$$
Then $0\le\varphi_\delta\le\delta$ and $\supp \varphi_\delta\subset B_r(x_0)$, so that $\varphi_\delta$ vanishes a.e. in $A$ and in $\R^n\setminus\Omega$; hence $\varphi_\delta\in W_A^{s,G}(\Omega)$.

We now claim that $u$ is an extremal for $\lambda_{s,\mu}(A)$. On the one hand, since $|A|>\alpha|\Omega|$, every function vanishing a.e. in $A$ is admissible in the characterization \eqref{caractLambda_s}, whence $\Lambda_{s,\mu}(\alpha)\le\lambda_{s,\mu}(A)$. On the other hand, $u\in W_A^{s,G}(\Omega)$ and $\Gcal(u)=\mu$, so that
$$
\lambda_{s,\mu}(A)\le \iint_{\R^{2n}}G(D^su)\,d\nu_n=\Lambda_{s,\mu}(\alpha),
$$
and the claim follows.

Therefore, by Proposition \ref{prop:EL}, there exists $\lambda>0$ such that
\begin{equation}\label{EL}
\iint_{\R^{2n}}g(D^su)\,D^sv\,d\nu_n=\lambda\int_\Omega g(u)\,v\,dx\qquad\text{for every } v\in W_A^{s,G}(\Omega).
\end{equation}
We now test \eqref{EL} with $v=\varphi_\delta$ and write
$$
I_\delta:=\iint_{\R^{2n}}g(D^su)\,D^s\varphi_\delta\,d\nu_n=\lambda\int_\Omega g(u)\,\varphi_\delta\,dx=:J_\delta .
$$
Since $\varphi_\delta\ge0$ and $u\ge 0$, we have $J_\delta\ge 0$. Moreover, $\varphi_\delta$ vanishes outside the set $\{u<\delta\}\cap B_r(x_0)$ and $\varphi_\delta\le\delta$ there, so that, $g$ being nondecreasing,
\begin{equation}\label{RHS}
0\ \le\ J_\delta\ \le\ \lambda\,g(\delta)\,\delta\,\big|B_r(x_0)\big| .
\end{equation}
In particular $J_\delta=o(\delta)$ as $\delta\to0^+$, since $g(\delta)\to0$.

We now estimate $I_\delta$ from above. The integrand
$$
\mathcal{I}:=g\big(D^su\big)\,D^s\varphi_\delta
$$
is symmetric in $(x,y)$, because $g$ is odd and both $D^su$ and $D^s\varphi_\delta$ are antisymmetric, so that in discussing its sign we may assume $u(x)<u(y)$; note also that $\mathcal{I}=0$ when $u(x)=u(y)$. Under that assumption $g(D^su)<0$, and hence $\mathcal{I}\le0$ if and only if $\varphi_\delta(x)\ge\varphi_\delta(y)$. This holds trivially when $u(y)\ge\delta$, since then $\varphi_\delta(y)=0$. When $u(y)<\delta$ we write
\begin{equation}\label{clave}
\varphi_\delta(y)-\varphi_\delta(x)=\big(\delta-u(y)\big)\big(\psi(y)-\psi(x)\big)+\big(u(x)-u(y)\big)\psi(x)\le\big(\delta-u(y)\big)\big(\psi(y)-\psi(x)\big),
\end{equation}
the inequality because $u(x)<u(y)$ and $\psi\ge0$. If $\psi(y)\le\psi(x)$, the right-hand side of \eqref{clave} is nonpositive and again $\mathcal{I}\le0$; if $\psi(y)>\psi(x)$, then $0<\delta-u(y)<\delta$ gives $\varphi_\delta(y)-\varphi_\delta(x)\le\delta\,|\psi(x)-\psi(y)|$. Since in this last case $u(x)<u(y)<\delta$ forces $u(y)>0$, setting
$$
\mathcal{B}_\delta:=\{u<\delta\}\times\{u<\delta\}
$$
we have proved that
\begin{equation}\label{signo}
\mathcal{I}\le 0\quad\text{in }\R^{2n}\setminus\mathcal{B}_\delta,\qquad
\mathcal{I}\le \delta\,\big|g(D^su)\big|\,\big|D^s\psi\big|\quad\text{in }\mathcal{B}_\delta .
\end{equation}
Let now $d:=\operatorname{diam}(\Omega)$, let
$$
S:=\big(\{u=0\}\cap B_{r/2}(x_0)\big)\times\big(F\setminus B_r(x_0)\big)\ \subset\ \R^{2n},
$$
and split
$$
I_\delta=\iint_{\R^{2n}\setminus S}\mathcal{I}\,d\nu_n+\iint_{S}\mathcal{I}\,d\nu_n .
$$
If $(x,y)\in S$ then $u(x)=0$ and $\psi(x)=1$, so that $\varphi_\delta(x)=\delta$, while $u(y)\ge\theta>\delta$ gives $\varphi_\delta(y)=0$; moreover $|x-y|\le d$. Since $g$ is odd and nondecreasing,
$$
\mathcal{I}(x,y)=-g\left(\frac{u(y)}{|x-y|^{s}}\right)\frac{\delta}{|x-y|^{s}}\ \le\ -\delta\,g\big(\theta d^{-s}\big)\,d^{-s},
$$
and therefore, using $d\nu_n\ge d^{-n}dx\,dy$,
\begin{equation}\label{gain}
\iint_{S}\mathcal{I}\,d\nu_n\ \le\ -K\delta,\qquad
K:=g\big(\theta d^{-s}\big)\,d^{-s-n}\,\big|\{u=0\}\cap B_{r/2}(x_0)\big|\,\big|F\setminus B_r(x_0)\big| ,
\end{equation}
and $K>0$ by the choice of $r$.

The same computation shows that $S\cap\mathcal{B}_\delta=\emptyset$, since $u=0$ on the first factor of $S$ and $u\ge\theta>\delta$ on the second one. Hence $\mathcal{B}_\delta\subset\R^{2n}\setminus S$, and since $\mathcal{I}\le0$ on $(\R^{2n}\setminus S)\setminus\mathcal{B}_\delta$ by \eqref{signo}, the second bound in \eqref{signo} gives
\begin{equation}\label{error}
\iint_{\R^{2n}\setminus S}\mathcal{I}\,d\nu_n\ \le\ \iint_{\mathcal{B}_\delta}\mathcal{I}\,d\nu_n\ \le\ \delta\iint_{\mathcal{B}_\delta}\big|g(D^su)\big|\,\big|D^s\psi\big|\,d\nu_n=:\delta\,\varepsilon(\delta).
\end{equation}
Finally, $\varepsilon(\delta)\to0$ as $\delta\to0^+$. Indeed, the integrand belongs to $L^1(\nu_n)$, because $g(D^su)\in L_{\bar G}(\nu_n)$ by \eqref{conjugada}, $\psi\in W^{s,G}(\R^n)$ and H\"older's inequality in Orlicz spaces applies; and the sets $\mathcal{B}_\delta$ decrease, as $\delta\to0^+$, to $\{u=0\}\times\{u=0\}$, where $D^su=0$ and hence $g(D^su)=0$.

Adding \eqref{gain} and \eqref{error} and recalling \eqref{RHS} we obtain
$$
0\ \le\ J_\delta=I_\delta\ \le\ \delta\big(\varepsilon(\delta)-K\big)\ <\ 0
$$
as soon as $\delta$ is small enough that $\varepsilon(\delta)<K$, a contradiction. Since the inequality $|\{u=0\}\cap\Omega|\ge\alpha|\Omega|$ holds because $u$ is admissible in \eqref{caractLambda_s}, the proof is complete.
\end{proof}

\begin{obs}
In the case $G(t)=t^p$ the corresponding statement, \cite[Theorem 3.4]{FBS}, is obtained from a strong minimum principle, \cite[Theorem 2.8]{FBS}, which guarantees that a nonnegative extremal is positive almost everywhere in $\Omega\setminus A$ and which in turn rests on \cite[Theorem A.1]{BF}. The proof given above uses no minimum principle, and no regularity theory for $(-\Delta_g)^s$ either: its only ingredients are the Euler--Lagrange equation \eqref{EL}, the sign of the integrand and the fact that the layer $\{0<u<\delta\}$ shrinks to a null set as $\delta\to0^+$. In particular, \eqref{Liebermann} is used only through \eqref{EL}. A minimum principle is of course also available in the present setting, as a consequence of the regularity theory for nonlocal problems with Orlicz growth (see for instance \cite{KLL}), and would provide an alternative proof.
\end{obs}

\begin{cor}\label{coro:identif}
Let $\alpha\in(0,1)$ and $\mu>0$, let $A\subset\Omega$ be an optimal set for $\Lambda_{s,\mu}(\alpha)$ and let $u\in W_A^{s,G}(\Omega)$ be an extremal for $\lambda_{s,\mu}(A)$. Then
$$
A=\{u=0\}\cap\Omega\qquad\text{up to a set of zero measure.}
$$
\end{cor}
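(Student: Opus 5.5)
The plan is to reduce the corollary to Theorem \ref{teo:identif}, by showing that $u$ is also an extremal for the characterization of Lemma \ref{lema:caract}.

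First, since $A$ is optimal, $\lambda_{s,\mu}(A)=\Lambda_{s,\mu}(\alpha)$. In particular this value is finite, by the upper bound coming from any admissible obstacle. The extremal $u$ lies in $W_A^{s,G}(\Omega)$ and satisfies $\Gcal(u)=\mu$ and $\Phi_{s,G}(u)=\lambda_{s,\mu}(A)=\Lambda_{s,\mu}(\alpha)$. Because $u=0$ a.e. in $A$, we have $A\subset\{u=0\}\cap\Omega$ up to a null set. Hence
$$
|\{u=0\}\cap\Omega|\ge|A|=\alpha|\Omega|,
$$
so $u$ is admissible in \eqref{caractLambda_s}. It attains the infimum there, since by Lemma \ref{lema:caract} that infimum equals $\Lambda_{s,\mu}(\alpha)=\Phi_{s,G}(u)$. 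Thus $u$ is an extremal for $\Lambda_{s,\mu}(\alpha)$ in the sense of Theorem \ref{teo:identif}.

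Second, apply Theorem \ref{teo:identif}, which gives $|\{u=0\}\cap\Omega|=\alpha|\Omega|=|A|$. Combining this with the inclusion $A\subset\{u=0\}\cap\Omega$ (up to a null set), we get
$$
\big|(\{u=0\}\cap\Omega)\setminus A\big|=|\{u=0\}\cap\Omega|-|A|=0,
$$
where the subtraction is legitimate because all measures are finite, $\Omega$ being bounded. Hence the symmetric difference of $A$ and $\{u=0\}\cap\Omega$ is null, which is the claim.

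Finally, I would check the hypotheses of Theorem \ref{teo:identif}. Its proof begins by invoking Lemma \ref{lema:signo} to replace $u$ by $|u|$, so I would verify that this reduction is harmless: $|u|$ has the same zero set as $u$, and $|u|$ is again an extremal of $\lambda_{s,\mu}(A)$ (with $|A|>0$ because $\alpha>0$), hence of $\Lambda_{s,\mu}(\alpha)$. With that, the argument of the theorem applies verbatim. This sign reduction, and the finiteness of $\lambda_{s,\mu}(A)$ so that an extremal is meaningful, are the only points needing care; the rest is measure bookkeeping.
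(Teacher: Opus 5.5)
Your proposal is correct and follows the paper's proof essentially verbatim. You use Lemma \ref{lema:caract} to see that $u$ is an extremal for $\Lambda_{s,\mu}(\alpha)$, apply Theorem \ref{teo:identif} to get $|\{u=0\}\cap\Omega|=\alpha|\Omega|=|A|$, and conclude from the inclusion $A\subset\{u=0\}\cap\Omega$ up to a null set; your extra verification of the sign reduction is harmless but not needed, since the proof of Theorem \ref{teo:identif} already handles it.
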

\begin{proof}
Since $u\in W_A^{s,G}(\Omega)$ we have $u=0$ a.e. in $A$, that is, $A\subset\{u=0\}\cap\Omega$ up to a null set; in particular $|\{u=0\}\cap\Omega|\ge|A|=\alpha|\Omega|$. Moreover $\Phi_{s,G}(u)=\lambda_{s,\mu}(A)=\Lambda_{s,\mu}(\alpha)$ and $\Gcal(u)=\mu$, so that $u$ is admissible in \eqref{caractLambda_s} and is therefore an extremal for $\Lambda_{s,\mu}(\alpha)$. By Theorem \ref{teo:identif},
$$
\big|\{u=0\}\cap\Omega\big|=\alpha|\Omega|=|A| .
$$
A measurable set contained in another one of the same finite measure differs from it by a null set, which gives the conclusion.
\end{proof}

\subsection{Optimization for the $(s,G)$--soft obstacle problem}\label{subsec:soft}

We now turn to \eqref{Lambdasoft}. A potential $\varphi\in\mathcal{B}_\alpha$ at which the infimum in \eqref{Lambdasoft} is attained is called an \emph{optimal potential}, and if moreover $u$ is an extremal for $\lambda_{s,\mu}(\varphi,\sigma)$, the pair $(u,\varphi)$ is called an \emph{optimal pair}. We show that an optimal pair exists and that the optimal potential can always be taken to be a characteristic function, so that nothing is lost by relaxing the class $\mathcal{A}_\alpha$ to $\mathcal{B}_\alpha$.
\begin{teo}\label{teo:paroptimo}
Let $0<s<1$, $\mu>0$, $\sigma>0$ and $\alpha\in(0,1)$. Then there exists an optimal pair $(u,\varphi)\in W_0^{s,G}(\Omega)\times\mathcal{B}_\alpha$ for $\Lambda_{s,\mu}(\alpha,\sigma)$, with $u\ge0$. Moreover one can take $\varphi=\chi_D$ with
$$
\{u<\tau\}\cap\Omega\subset D\subset\{u\le\tau\}\cap\Omega
$$
for some $\tau\ge0$.
\end{teo}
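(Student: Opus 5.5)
The plan is the direct method on the pair $(u,\varphi)$, followed by the bathtub principle to replace the optimal potential by a characteristic function.

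\emph{Step 1: a minimizing sequence of pairs.} Take $\varphi_k\in\mathcal{B}_\alpha$ with $\lambda_{s,\mu}(\varphi_k,\sigma)\to\Lambda_{s,\mu}(\alpha,\sigma)$. By Theorem \ref{teo:existePot} each $\lambda_{s,\mu}(\varphi_k,\sigma)$ has an extremal $u_k$. Replacing $u_k$ by $|u_k|$ does not increase $\Phi_{s,G}$ (as in Lemma \ref{lema:signo}) and leaves $\Gcal$ and the penalization term unchanged, so we may assume $u_k\ge0$. The bound in Remark \ref{obs:funcionalsoft}(2) gives $\Phi_{s,G}(u_k)\le\lambda_{s,\mu}(\emptyset)+\sigma\mu$. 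Hence $\{u_k\}$ is bounded in $W_0^{s,G}(\Omega)$ by Remark \ref{obs:normamodular}.

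\emph{Step 2: passage to the limit.} Since $\mathcal{B}_\alpha$ is weakly-$*$ compact in $L^\infty(\Omega)$, after extracting subsequences we get $u_k\rightharpoonup u$ weakly in $W_0^{s,G}(\Omega)$, $u_k\to u$ in $L_G(\Omega)$ and a.e., and $\varphi_k\rightharpoonup\varphi$ weakly-$*$ with $\varphi\in\mathcal{B}_\alpha$. Then $u\ge0$, and $\Gcal(u)=\mu$ by Lemma \ref{lema:convG}. The penalization term converges because
$$
\int_\Omega G(u_k)\varphi_k\,dx-\int_\Omega G(u)\varphi\,dx=\int_\Omega\big(G(u_k)-G(u)\big)\varphi_k\,dx+\int_\Omega G(u)(\varphi_k-\varphi)\,dx .
$$
The first integral tends to $0$ by Lemma \ref{lema:convG} and $0\le\varphi_k\le1$. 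The second tends to $0$ because $G(u)\in L^1(\Omega)$ and $\varphi_k\rightharpoonup\varphi$ weakly-$*$. This mixed term, weak-$*$ against strong $L^1$ convergence, is the only point needing care, and it is handled exactly by this splitting. Fatou's lemma (as in Theorem \ref{exop}) gives $\Phi_{s,G}(u)\le\liminf_k\Phi_{s,G}(u_k)$. Therefore
$$
\lambda_{s,\mu}(\varphi,\sigma)\le\Phi_{s,G}(u)+\sigma\int_\Omega G(u)\varphi\,dx\le\Lambda_{s,\mu}(\alpha,\sigma)\le\lambda_{s,\mu}(\varphi,\sigma).
$$
So $(u,\varphi)$ is an optimal pair, with $u$ an extremal for $\lambda_{s,\mu}(\varphi,\sigma)$.

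\emph{Step 3: bathtub principle.} With $u$ fixed, minimize the linear functional $\psi\mapsto\int_\Omega G(u)\psi\,dx$ over $\mathcal{B}_\alpha$. Since $f(t):=|\{u<t\}\cap\Omega|$ is nondecreasing, left-continuous, and satisfies $f(t)\to|\Omega|>\alpha|\Omega|$, we can set $\tau:=\sup\{t\ge0\colon f(t)\le\alpha|\Omega|\}$. Then $|\{u<\tau\}\cap\Omega|\le\alpha|\Omega|\le|\{u\le\tau\}\cap\Omega|$. Arguing as in Lemma \ref{lema:caract} (intersecting the level set $\{u=\tau\}$ with balls $B_t(0)$ and using the intermediate value theorem), we choose $D$ with $\{u<\tau\}\cap\Omega\subset D\subset\{u\le\tau\}\cap\Omega$ and $|D|=\alpha|\Omega|$, so that $\chi_D\in\mathcal{B}_\alpha$.

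For any $\psi\in\mathcal{B}_\alpha$ we have $\int_\Omega(\chi_D-\psi)\,dx=0$, and $\chi_D-\psi\ge0$ where $G(u)<G(\tau)$, while $\chi_D-\psi\le0$ where $G(u)>G(\tau)$, since $G$ is increasing on $[0,\infty)$ and $u\ge0$. Hence
$$
\int_\Omega\big(G(u)-G(\tau)\big)(\chi_D-\psi)\,dx\le0,
$$
which gives $\int_\Omega G(u)\chi_D\,dx\le\int_\Omega G(u)\psi\,dx$. Taking $\psi=\varphi$,
$$
\lambda_{s,\mu}(\chi_D,\sigma)\le\Phi_{s,G}(u)+\sigma\int_\Omega G(u)\chi_D\,dx\le\Phi_{s,G}(u)+\sigma\int_\Omega G(u)\varphi\,dx=\Lambda_{s,\mu}(\alpha,\sigma).
$$
So $\chi_D$ is optimal, $u$ is an extremal for $\lambda_{s,\mu}(\chi_D,\sigma)$, and $(u,\chi_D)$ is an optimal pair of the required form.
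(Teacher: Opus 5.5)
Your proposal is correct and follows the paper's route. You take a minimizing sequence of potentials, replace the extremals by $|u_k|$, use the same compactness and the same splitting for the mixed term $\int_\Omega G(u_k)\varphi_k\,dx$, and then apply the bathtub principle to $\psi\mapsto\int_\Omega G(u)\psi\,dx$; bounding via Remark \ref{obs:funcionalsoft}(2) and using Fatou instead of weak lower semicontinuity are immaterial differences. Your hand proof of the bathtub step is, if anything, slightly more complete than the paper's citation of Lieb--Loss, whose minimizer is in general $\chi_{\{u<\tau\}}+c\,\chi_{\{u=\tau\}}$, so your explicit choice of $D$ inside $\{u=\tau\}$ is exactly what yields a genuine characteristic function.
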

\begin{proof}
Let $\{\varphi_k\}_{k\in\N}\subset\mathcal{B}_\alpha$ be a minimizing sequence for $\Lambda_{s,\mu}(\alpha,\sigma)$. Since $\{\varphi_k\}_{k\in\N}$ is bounded in $L^\infty(\Omega)$, which is the dual of the separable space $L^1(\Omega)$, there exist $\varphi\in L^\infty(\Omega)$ and a subsequence, still denoted by $\{\varphi_k\}_{k\in\N}$, such that
\begin{equation}\label{convdebilestrella}
\varphi_k\stackrel{*}{\rightharpoonup}\varphi\quad\text{in }L^\infty(\Omega).
\end{equation}
Testing \eqref{convdebilestrella} with the functions $\chi_E$, $E\subset\Omega$ measurable, and with $1$, we get $0\le\varphi\le1$ a.e. and $\int_\Omega\varphi\,dx=\alpha|\Omega|$, that is, $\varphi\in\mathcal{B}_\alpha$.

For every $k$ let $u_k\in W_0^{s,G}(\Omega)$ be an extremal for $\lambda_{s,\mu}(\varphi_k,\sigma)$, which exists by Theorem \ref{teo:existePot}; replacing $u_k$ by $|u_k|$, which does not increase $\Phi_{s,G}$ and leaves both $\Gcal(u_k)$ and $\int_\Omega G(u_k)\varphi_k\,dx$ unchanged, we may assume $u_k\ge0$. Since
$$
\Phi_{s,G}(u_k)\le \Phi_{s,G}(u_k)+\sigma\int_\Omega G(u_k)\varphi_k\,dx=\lambda_{s,\mu}(\varphi_k,\sigma)\longrightarrow\Lambda_{s,\mu}(\alpha,\sigma),
$$
the modulars $\Phi_{s,G}(u_k)$ are bounded, so that $\{u_k\}_{k\in\N}$ is bounded in $W_0^{s,G}(\Omega)$ by Remark \ref{obs:normamodular}. Arguing as in the proof of Theorem \ref{teo:existeA} we obtain $u\in W_0^{s,G}(\Omega)$, $u\ge0$, and a subsequence such that $u_k\rightharpoonup u$ weakly in $W_0^{s,G}(\Omega)$, $u_k\to u$ in $L_G(\Omega)$ and a.e. in $\Omega$; in particular $\Gcal(u)=\mu$, so that $u$ is admissible in \eqref{lambdasoft}.

We claim that
\begin{equation}\label{convcruzado}
\int_\Omega G(u_k)\,\varphi_k\,dx\longrightarrow\int_\Omega G(u)\,\varphi\,dx .
\end{equation}
Indeed,
$$
\int_\Omega G(u_k)\varphi_k\,dx-\int_\Omega G(u)\varphi\,dx=\int_\Omega\big(G(u_k)-G(u)\big)\varphi_k\,dx+\int_\Omega G(u)\big(\varphi_k-\varphi\big)dx .
$$
The first term is bounded in absolute value by $\|G(u_k)-G(u)\|_{L^1(\Omega)}$, which tends to $0$ by Lemma \ref{lema:convG}, and the second one tends to $0$ by \eqref{convdebilestrella}, since $G(u)\in L^1(\Omega)$.

Since $\Phi_{s,G}$ is convex and strongly lower semicontinuous, hence weakly lower semicontinuous, \eqref{convcruzado} gives
$$
\Phi_{s,G}(u)+\sigma\int_\Omega G(u)\varphi\,dx\le \liminf_{k\to\infty}\left(\Phi_{s,G}(u_k)+\sigma\int_\Omega G(u_k)\varphi_k\,dx\right)=\Lambda_{s,\mu}(\alpha,\sigma).
$$
On the other hand $u$ is admissible in \eqref{lambdasoft} and $\varphi\in\mathcal{B}_\alpha$, so that
$$
\Lambda_{s,\mu}(\alpha,\sigma)\le\lambda_{s,\mu}(\varphi,\sigma)\le \Phi_{s,G}(u)+\sigma\int_\Omega G(u)\varphi\,dx\le\Lambda_{s,\mu}(\alpha,\sigma),
$$
therefore $(u,\varphi)$ is an optimal pair.

It remains to show that the optimal potential can be taken to be a characteristic function. If $\psi\in\mathcal{B}_\alpha$ then, $u$ being admissible in \eqref{lambdasoft},
$$
\Phi_{s,G}(u)+\sigma\int_\Omega G(u)\varphi\,dx=\Lambda_{s,\mu}(\alpha,\sigma)\le\lambda_{s,\mu}(\psi,\sigma)\le\Phi_{s,G}(u)+\sigma\int_\Omega G(u)\psi\,dx,
$$
so that $\varphi$ minimizes the linear functional $\psi\mapsto\int_\Omega G(u)\psi\,dx$ on $\mathcal{B}_\alpha$. By the bathtub principle \cite[Theorem 1.14]{LL} this minimum is attained at some $\chi_D$ with
$$
\big\{G(u)<t\big\}\cap\Omega\subset D\subset\big\{G(u)\le t\big\}\cap\Omega
$$
for some $t\ge0$, and under \eqref{Liebermann} the function $G$ is strictly increasing on $[0,\infty)$, so that these sets are $\{u<\tau\}\cap\Omega$ and $\{u\le\tau\}\cap\Omega$ for a suitable $\tau\ge0$. Finally $\int_\Omega G(u)\chi_D\,dx\le\int_\Omega G(u)\varphi\,dx$ gives $\lambda_{s,\mu}(\chi_D,\sigma)\le\Lambda_{s,\mu}(\alpha,\sigma)$, and the reverse inequality holds because $\chi_D\in\mathcal{B}_\alpha$; hence $(u,\chi_D)$ is an optimal pair as well.
\end{proof}

\begin{obs}
As in \cite[Remark 3.6]{FBS}, a further application of the bathtub principle shows that the optimal potential is unique if and only if $|\{u=\tau\}\cap\Omega|=0$. Observe also that $\tau>0$ precisely when $|\{u=0\}\cap\Omega|<\alpha|\Omega|$; in the local case this is automatic, since there the extremals are positive almost everywhere by the strong minimum principle.
\end{obs}

\subsection{Connection between the hard and the soft problems}\label{subsec:connection}
In this subsection we analyze, in the same spirit as in \cite[Theorem 3.7]{FBS}, the connection between the constants $\Lambda_{s,\mu}(\alpha)$ and $\Lambda_{s,\mu}(\alpha,\sigma)$ when $\sigma \to \infty$.

Recall that, by \eqref{Lambdasoft}, $\Lambda_{s,\mu}(\alpha,\sigma)$ is the infimum of $\lambda_{s,\mu}(\varphi,\sigma)$ over the potentials $\varphi\in\mathcal{B}_\alpha$. Given $\varphi\in \mathcal{B}_{\alpha}$ we consider the functional $\mathcal{S}_{s,\varphi,\sigma}\colon W_0^{s,G}(\Omega)\to \R$ defined as
$$
\mathcal{S}_{s,\varphi,\sigma}(v):=\iint_{\R^{2n}}G(D^sv)\,d\nu_n+\sigma\int_\Omega G(v) \varphi\,dx.
$$
\begin{teo}\label{teo:conexion}
Let $0<s<1$, $\mu>0$ and $\alpha\in(0,1)$, let $\Omega\subset \R^n$ be a bounded open domain and let $\Lambda_{s,\mu}(\alpha)$ and $\Lambda_{s,\mu}(\alpha,\sigma)$ be the constants defined as \eqref{Lambdamu} and \eqref{Lambdasoft} respectively. Then
$$
\lim_{\sigma\to \infty}\Lambda_{s,\mu}(\alpha,\sigma)=\Lambda_{s,\mu}(\alpha).
$$
Moreover, let $\{\sigma_j\}_{j\in\N}$ be a sequence of positive numbers with $\sigma_j\uparrow\infty$ and, for every $j\in\N$, let $(u_j,\chi_{D_j})\in W_0^{s,G}(\Omega)\times\mathcal{B}_\alpha$ be an optimal pair for $\Lambda_{s,\mu}(\alpha,\sigma_j)$ with $u_j\geq 0$. Then there exist a subsequence, still denoted by $\{(u_j,\chi_{D_j})\}_{j\in\N}$, a nonnegative extremal $u$ for $\Lambda_{s,\mu}(\alpha)$ and a set $D\in\mathcal{A}_\alpha$ such that
$$
u_j\to u\ \text{ strongly in } W_0^{s,G}(\Omega),\qquad |D_j\symdiff D|\to 0,\qquad D=\{u=0\}\cap\Omega .
$$
In particular $(u,D)$ is an optimal configuration for $\Lambda_{s,\mu}(\alpha)$.
\end{teo}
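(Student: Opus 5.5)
The proof is a $\Gamma$--convergence type argument in the spirit of \cite[Theorem 3.7]{FBS}, organized in four steps.

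\textbf{Step 1: upper bound and monotonicity.} By Remark \ref{obs:funcionalsoft}(3), $\sigma\mapsto\Lambda_{s,\mu}(\alpha,\sigma)$ is nondecreasing. By Remark \ref{obs:funcionalsoft}(5) applied to an optimal set $A$ from Theorem \ref{exop},
$$
\Lambda_{s,\mu}(\alpha,\sigma)\le\lambda_{s,\mu}(\chi_A,\sigma)\le\lambda_{s,\mu}(A)=\Lambda_{s,\mu}(\alpha).
$$
Hence $L:=\lim_{\sigma\to\infty}\Lambda_{s,\mu}(\alpha,\sigma)$ exists and $L\le\Lambda_{s,\mu}(\alpha)$. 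It remains to prove $\Lambda_{s,\mu}(\alpha)\le L$, and we obtain this together with the convergence of the optimal pairs.

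\textbf{Step 2: compactness.} Take optimal pairs $(u_j,\chi_{D_j})$ as in the statement. We have
$$
\Phi_{s,G}(u_j)+\sigma_j\int_{D_j}G(u_j)\,dx=\Lambda_{s,\mu}(\alpha,\sigma_j)\le\Lambda_{s,\mu}(\alpha).
$$
So $\Phi_{s,G}(u_j)$ is bounded and $\int_{D_j}G(u_j)\,dx\le\Lambda_{s,\mu}(\alpha)/\sigma_j\to0$. Remark \ref{obs:normamodular}, reflexivity and compactness (Proposition \ref{prop:espacios}) then give a subsequence with $u_j\rightharpoonup u$ weakly, $u_j\to u$ in $L_G(\Omega)$ and a.e. As a consequence $u\ge0$, and Lemma \ref{lema:convG} gives $\Gcal(u)=\mu$. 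Since $\chi_{D_j}$ is bounded in $L^\infty$, we may also assume $\chi_{D_j}\stackrel{*}{\rightharpoonup}\varphi\in\mathcal{B}_\alpha$. Arguing as in \eqref{convcruzado},
$$
\int_\Omega G(u)\varphi\,dx=\lim_{j}\int_\Omega G(u_j)\chi_{D_j}\,dx=0,
$$
so $\varphi=0$ a.e. on $\{u>0\}$. Therefore
$$
\alpha|\Omega|=\int_\Omega\varphi\,dx\le|\{u=0\}\cap\Omega|.
$$
Thus $u$ is admissible in \eqref{caractLambda_s}, and Lemma \ref{lema:caract} together with weak lower semicontinuity yields
$$
\Lambda_{s,\mu}(\alpha)\le\Phi_{s,G}(u)\le\liminf_j\Phi_{s,G}(u_j)\le\liminf_j\Lambda_{s,\mu}(\alpha,\sigma_j)=L\le\Lambda_{s,\mu}(\alpha).
$$
This proves the limit of the constants and shows that $u$ is an extremal for $\Lambda_{s,\mu}(\alpha)$.

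\textbf{Step 3: strong convergence.} The chain above also gives $\Phi_{s,G}(u_j)\to\Phi_{s,G}(u)$. Together with weak convergence, this should give strong convergence, and I would try the following route first. Since $D^su_j\to D^su$ $\nu_n$--a.e. and $G(D^su_j)\to G(D^su)$ in $L^1(\nu_n)$ (Brezis--Lieb/Vitali, using equality of the limits of the integrals), the bound
$$
G(D^su_j-D^su)\le 2^{p^+-1}\bigl(G(D^su_j)+G(D^su)\bigr),
$$
a consequence of convexity and \eqref{sandwich}, provides a dominating sequence converging in $L^1$. The generalized dominated convergence theorem then gives $\Phi_{s,G}(u_j-u)\to0$, which by Remark \ref{obs:normamodular} is strong convergence in $W_0^{s,G}(\Omega)$.

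\textbf{Step 4: identification of the limit set, the main obstacle.} By Theorem \ref{teo:identif}, $|\{u=0\}\cap\Omega|=\alpha|\Omega|$. Combining this with $\varphi\le\chi_{\{u=0\}}$ a.e. and $\int_\Omega\varphi\,dx=\alpha|\Omega|$ forces
$$
\varphi=\chi_D,\qquad D:=\{u=0\}\cap\Omega\in\mathcal{A}_\alpha.
$$
Weak-$*$ convergence of characteristic functions to a characteristic function is strong in $L^1$, because
$$
\int_\Omega|\chi_{D_j}-\chi_D|\,dx=\int_\Omega\chi_{D_j}\,dx+\int_\Omega\chi_D\,dx-2\int_\Omega\chi_{D_j}\chi_D\,dx\to\alpha|\Omega|+\alpha|\Omega|-2\alpha|\Omega|=0.
$$
Hence $|D_j\symdiff D|\to0$. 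Finally, $u\in W_D^{s,G}(\Omega)$ gives
$$
\lambda_{s,\mu}(D)\le\Phi_{s,G}(u)=\Lambda_{s,\mu}(\alpha),
$$
so $(u,D)$ is an optimal configuration.

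The crucial point is this last step: without Theorem \ref{teo:identif} the weak-$*$ limit $\varphi$ need not be a characteristic function, and the convergence $|D_j\symdiff D|\to0$ would fail.
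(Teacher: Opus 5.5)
Your proof is correct and follows the paper's argument essentially step by step: the upper bound via an optimal configuration and monotonicity in $\sigma$, compactness with a weak-$*$ limit $\varphi\in\mathcal{B}_\alpha$ of the potentials, the vanishing of $\int_\Omega G(u)\varphi\,dx$, and Theorem \ref{teo:identif} to force $\varphi=\chi_D$ and hence $|D_j\symdiff D|\to0$. For the first assertion you should also cite Theorem \ref{teo:paroptimo} for the existence of the optimal pairs. The one real difference is the strong-convergence step. The paper applies the general Brezis--Lieb lemma; you instead get $G(D^su_j)\to G(D^su)$ in $L^1(\nu_n)$, which is really Scheff\'e's lemma rather than Brezis--Lieb or Vitali. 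You then combine it with $G(a-b)\le 2^{p^+-1}(G(a)+G(b))$ and generalized dominated convergence, which is a valid and slightly more elementary route.
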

\begin{proof}
Let $(w,A)$ be an optimal configuration for $\Lambda_{s,\mu}(\alpha)$, given by Theorem \ref{exop}; then $w\in W_A^{s,G}(\Omega)$, $\Gcal(w)=\mu$, $|A|=\alpha |\Omega|$ and
$$
\Lambda_{s,\mu}(\alpha)=\Phi_{s,G}(w).
$$
Therefore, for every $\sigma>0$, since $w=0$ a.e. in $A$,
$$
\Lambda_{s,\mu}(\alpha,\sigma)\leq \lambda_{s,\mu}(\chi_A,\sigma)\leq \mathcal{S}_{s,\chi_{A},\sigma}(w)=\Phi_{s,G}(w),
$$
and so $\Lambda_{s,\mu}(\alpha,\sigma)\leq \Lambda_{s,\mu}(\alpha)$ for every $\sigma>0$. Since $\sigma\mapsto \Lambda_{s,\mu}(\alpha,\sigma)$ is nondecreasing, by Remark \ref{obs:funcionalsoft}(3), there exists $\Lambda^*$ such that
$$
\Lambda_{s,\mu}(\alpha,\sigma)\uparrow \Lambda^*\leq \Lambda_{s,\mu}(\alpha).
$$
Now let $\{\sigma_j\}_{j\in \N}$ be a sequence of positive numbers with $\sigma_j\uparrow \infty$ and, for every $j\in\N$, let $(u_j,\varphi_j)$ be an optimal pair for $\Lambda_{s,\mu}(\alpha,\sigma_j)$ with $u_j\geq 0$ and $\varphi_j=\chi_{D_j}$, which exists by Theorem \ref{teo:paroptimo}. Then
$$
\Phi_{s,G}(u_j)\leq \mathcal{S}_{s,\varphi_j,\sigma_j}(u_j)=\Lambda_{s,\mu}(\alpha,\sigma_j)\leq \Lambda_{s,\mu}(\alpha),
$$
so that $\{u_j\}_{j\in \N}$ is bounded in $W_0^{s,G}(\Omega)$ by Remark \ref{obs:normamodular}. Hence, by the reflexivity of the space $W_0^{s,G}(\Omega)$, the compactness of the embedding $W_0^{s,G}(\Omega) \hookrightarrow L_G(\Omega)$ and the fact that $L^\infty(\Omega)$ is the dual of the separable space $L^1(\Omega)$, there exist a pair $(u,\varphi)\in W_0^{s,G}(\Omega)\times \mathcal{B}_\alpha$ and a subsequence, that we still call $\{(u_j,\varphi_j)\}_{j\in\N}$, such that
\begin{align}
u_j\rightharpoonup u&\quad\text{weakly in } W^{s,G}(\R^n),\label{ujdebil}
\\
u_j\to u &\quad\text{strongly in } L_G(\Omega),\label{ujfuerte}
\\
\varphi_j \stackrel{*}{\rightharpoonup} \varphi &\quad\text{weakly--}*\text{ in } L^\infty(\Omega),\label{ujdebilestrella}
\end{align}
where $\varphi\in\mathcal{B}_\alpha$ follows by testing \eqref{ujdebilestrella} with the functions $\chi_E$, $E\subset\Omega$ measurable, and with $1$, as in the proof of Theorem \ref{teo:paroptimo}.

By \eqref{ujfuerte} and Lemma \ref{lema:convG} we have
$$
\Gcal(u)=\lim_{j\to\infty}\Gcal(u_j)=\mu ,
$$
and the argument that gives \eqref{convcruzado} in the proof of Theorem \ref{teo:paroptimo}, which uses only \eqref{ujfuerte}, \eqref{ujdebilestrella} and Lemma \ref{lema:convG}, yields
\begin{equation}\label{convcruzado2}
\int_\Omega G(u_j)\,\varphi_j\,dx\longrightarrow \int_\Omega G(u)\,\varphi\,dx .
\end{equation}
We claim that
\begin{equation}\label{penalcero}
\int_\Omega G(u)\,\varphi\,dx=0 .
\end{equation}
Indeed, since $\Phi_{s,G}(u_j)\geq 0$,
$$
\sigma_j\int_\Omega G(u_j)\,\varphi_j\,dx\leq \mathcal{S}_{s,\varphi_j,\sigma_j}(u_j)=\Lambda_{s,\mu}(\alpha,\sigma_j)\leq \Lambda_{s,\mu}(\alpha),
$$
so that
$$
\int_\Omega G(u_j)\,\varphi_j\,dx\leq \frac{\Lambda_{s,\mu}(\alpha)}{\sigma_j}\longrightarrow 0
$$
because $\sigma_j\uparrow\infty$ and $\Lambda_{s,\mu}(\alpha)<\infty$, and \eqref{penalcero} follows from \eqref{convcruzado2}.

Since $G(u)\geq 0$, $\varphi\geq 0$ and $G(t)=0$ if and only if $t=0$, \eqref{penalcero} gives $u=0$ almost everywhere in $\{\varphi>0\}$. On the other hand, $\varphi\leq 1$ implies
$$
\alpha|\Omega|=\int_\Omega \varphi\,dx=\int_{\{\varphi>0\}}\varphi\,dx\leq \big|\{\varphi>0\}\big| ,
$$
and therefore
$$
\big|\{u=0\}\cap\Omega\big|\geq \big|\{\varphi>0\}\big|\geq \alpha|\Omega| .
$$
Hence $u$ is admissible in \eqref{caractLambda_s} and, since $\Phi_{s,G}$ is convex and strongly lower semicontinuous, hence weakly lower semicontinuous, \eqref{ujdebil} gives
$$
\Lambda_{s,\mu}(\alpha)\leq \Phi_{s,G}(u)\leq \liminf_{j\to\infty}\Phi_{s,G}(u_j)\leq \liminf_{j\to\infty}\Lambda_{s,\mu}(\alpha,\sigma_j)=\Lambda^* .
$$
Together with $\Lambda^*\leq \Lambda_{s,\mu}(\alpha)$ this gives $\Lambda^*=\Lambda_{s,\mu}(\alpha)$, that is,
$$
\lim_{\sigma\to\infty}\Lambda_{s,\mu}(\alpha,\sigma)=\Lambda_{s,\mu}(\alpha).
$$
It remains to identify the limit pair. The chain above shows that $\Phi_{s,G}(u)=\Lambda_{s,\mu}(\alpha)$, so that $u$ is a nonnegative extremal for $\Lambda_{s,\mu}(\alpha)$ and, by Theorem \ref{teo:identif},
$$
\big|\{u=0\}\cap\Omega\big|=\alpha|\Omega| .
$$
Let $D:=\{\varphi>0\}$. Since $D\subset \{u=0\}\cap\Omega$ and $|D|\geq \alpha|\Omega|$, the two sets coincide up to a null set and $|D|=\alpha|\Omega|$, that is, $D=\{u=0\}\cap\Omega\in\mathcal{A}_\alpha$; moreover $\varphi\leq 1$ and $\int_\Omega\varphi\,dx=\alpha|\Omega|=|D|$ force $\varphi=\chi_D$. Consequently
$$
\lambda_{s,\mu}(D)\leq \Phi_{s,G}(u)=\Lambda_{s,\mu}(\alpha)\leq \lambda_{s,\mu}(D),
$$
so that $(u,D)$ is an optimal configuration for $\Lambda_{s,\mu}(\alpha)$. Testing \eqref{ujdebilestrella} with $\chi_D\in L^1(\Omega)$ and using that $|D_j|=|D|=\alpha|\Omega|$ we obtain
$$
|D_j\symdiff D|=|D_j|+|D|-2|D_j\cap D|=2\alpha|\Omega|-2\int_\Omega \chi_{D_j}\chi_D\,dx\longrightarrow 0 .
$$
Finally we prove that the convergence of $\{u_j\}_{j\in\N}$ is strong. On the one hand $\Phi_{s,G}(u_j)\leq \Lambda_{s,\mu}(\alpha,\sigma_j)\leq \Lambda_{s,\mu}(\alpha)$ and, by weak lower semicontinuity, $\Phi_{s,G}(u)\leq \liminf_{j\to\infty}\Phi_{s,G}(u_j)$, so that
$$
\lim_{j\to\infty}\Phi_{s,G}(u_j)=\Lambda_{s,\mu}(\alpha)=\Phi_{s,G}(u).
$$
On the other hand, by \eqref{ujfuerte} and after passing to a further subsequence, $u_j\to u$ a.e. in $\R^n$, whence $D^su_j\to D^su$ a.e. in $\R^{2n}\setminus\Delta$; moreover $G\in\Delta_2$ by \eqref{LyDelta2}, so that the modulars $\iint_{\R^{2n}}G(D^s(u_j-u))\,d\nu_n$ are bounded. The general form of the Brezis--Lieb lemma, \cite[Theorem 2]{BL}, applied to the function $G$ with respect to the measure $\nu_n$ and with the pointwise inequality $|G(a+b)-G(a)|\leq \varepsilon G(a)+C_\varepsilon G(b)$ of the proof of Lemma \ref{lema:convG}, then gives
$$
\iint_{\R^{2n}}G\big(D^s(u_j-u)\big)\,d\nu_n=\Phi_{s,G}(u_j)-\Phi_{s,G}(u)+o(1)\longrightarrow 0 ,
$$
and therefore $[u_j-u]_{s,G}\to0$ by Remark \ref{obs:normamodular}. Together with \eqref{ujfuerte} this shows that $u_j\to u$ strongly in $W_0^{s,G}(\Omega)$, and the proof is complete.
\end{proof}
\section{Dependence on the energy level $\mu$}\label{sec:asymptotic}
In this section we study the dependence of the constant \eqref{Lambdamu}, and of its optimal configurations, on the energy level $\mu$. Throughout this section, and unless otherwise stated, we assume that $\Omega\subset\R^n$ is open and bounded and that $G$ satisfies \eqref{Liebermann}.
\subsection{Monotonicity and continuity}\label{subsec:monotonia}
We begin by analyzing the behavior of $\Lambda_{s,\mu}(\alpha)$ as $\mu\to\mu_0$ for $0<\mu_0<+\infty$. The following result compares the constants at two different energy levels; since $\Phi_{s,G}$ is not homogeneous the comparison is not exact, and the two exponents $p^-/p^+$ and $p^+/p^-$ appear.

\begin{prop}\label{prop:escala}
Let $s,\alpha\in(0,1)$ be fixed. Then, for every $\mu,\mu_0\in(0,+\infty)$,
\begin{equation}\label{sandwichLambdas}
\min\big\{\theta^{\frac{p^-}{p^+}},\theta^{\frac{p^+}{p^-}}\big\}\,\Lambda_{s,\mu_0}(\alpha)\ \le\ \Lambda_{s,\mu}(\alpha)\ \le\ \max\big\{\theta^{\frac{p^-}{p^+}},\theta^{\frac{p^+}{p^-}}\big\}\,\Lambda_{s,\mu_0}(\alpha), \text{ where }\theta:=\frac{\mu}{\mu_0}.
\end{equation}
In particular $\mu\mapsto\Lambda_{s,\mu}(\alpha)$ is continuous in $(0,+\infty)$.
\end{prop}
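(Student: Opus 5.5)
The plan is to compare admissible functions at the two levels by a scalar dilation $u\mapsto tu$, using the characterization of Lemma \ref{lema:caract}. Dilation does not change the zero set $\{u=0\}\cap\Omega$, so the constraint $|\{u=0\}\cap\Omega|\ge\alpha|\Omega|$ is preserved. The only issue is to control $\Phi_{s,G}(tu)$ by $\Phi_{s,G}(u)$ in terms of $\theta=\mu/\mu_0$, through \eqref{sandwich}. Both constants are finite by Theorem \ref{exop}, so the inequalities are meaningful.

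\textbf{Upper bound.} Let $u\in W_0^{s,G}(\Omega)$ be admissible in \eqref{caractLambda_s} at level $\mu_0$. The map $t\mapsto\Gcal(tu)$ is continuous and strictly increasing, since $G$ is strictly increasing on $[0,\infty)$. It takes the value $\mu_0$ at $t=1$ and tends to $0$ and to $+\infty$ at the endpoints, by \eqref{sandwich}. Hence there is a unique $t>0$ with $\Gcal(tu)=\mu$, and $t\ge1$ if and only if $\theta\ge1$. By \eqref{sandwich},
$$
\min\{t^{p^-},t^{p^+}\}\le\theta\le\max\{t^{p^-},t^{p^+}\},\qquad \Phi_{s,G}(tu)\le\max\{t^{p^-},t^{p^+}\}\,\Phi_{s,G}(u).
$$
Now bound $t$ by $\theta$ in two cases.
\begin{itemize}
\item If $\theta\ge1$, then $t\ge1$ and $\theta\ge t^{p^-}$, so $\max\{t^{p^-},t^{p^+}\}=t^{p^+}\le\theta^{p^+/p^-}$.
\item If $\theta<1$, then $t<1$ and $\theta\ge t^{p^+}$, so $\max\{t^{p^-},t^{p^+}\}=t^{p^-}\le\theta^{p^-/p^+}$.
\end{itemize}
In both cases $\Phi_{s,G}(tu)\le\max\{\theta^{p^-/p^+},\theta^{p^+/p^-}\}\,\Phi_{s,G}(u)$. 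Since $tu$ is admissible in \eqref{caractLambda_s} at level $\mu$, taking the infimum over $u$ gives the right-hand inequality of \eqref{sandwichLambdas}.

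\textbf{Lower bound.} Exchange the roles of $\mu$ and $\mu_0$, which replaces $\theta$ by $\theta^{-1}$. This gives
$$
\Lambda_{s,\mu_0}(\alpha)\le\max\{\theta^{-p^-/p^+},\theta^{-p^+/p^-}\}\,\Lambda_{s,\mu}(\alpha).
$$
Since $1/\max\{\theta^{-a},\theta^{-b}\}=\min\{\theta^{a},\theta^{b}\}$, this is exactly the left-hand inequality of \eqref{sandwichLambdas}.

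\textbf{Continuity.} Fix $\mu_0$ and let $\mu\to\mu_0$, so that $\theta\to1$. Both the $\min$ and the $\max$ factors in \eqref{sandwichLambdas} tend to $1$, so $\Lambda_{s,\mu}(\alpha)\to\Lambda_{s,\mu_0}(\alpha)$.

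The only delicate point is the exponent bookkeeping in the two cases $\theta\gtrless1$: one must invert \eqref{sandwich} on the correct side, using the monotonicity of $t\mapsto\Gcal(tu)$ to match $t\gtrless1$ with $\theta\gtrless1$. That matching is what produces the exponents $p^\pm/p^\mp$ rather than $p^\pm$.
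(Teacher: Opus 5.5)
Your proof is correct and follows essentially the paper's argument: you dilate an admissible function by a scalar, which preserves the zero set so that Lemma \ref{lema:caract} applies, you invert \eqref{sandwich} on the side dictated by $t\gtrless1$, and you obtain the other inequality by exchanging $\mu$ and $\mu_0$. The only cosmetic difference is that you work with arbitrary admissible functions and pass to the infimum, whereas the paper dilates extremals supplied by Theorem \ref{exop}.
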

\begin{proof}
Assume first $\mu<\mu_0$, so that $\theta<1$, and let $u$ be an extremal for $\Lambda_{s,\mu_0}(\alpha)$, that is $\Gcal(u)=\mu_0$ and $\Phi_{s,G}(u)=\Lambda_{s,\mu_0}(\alpha)$. The function $t\mapsto\Gcal(tu)$ is continuous, vanishes at $t=0$ and equals $\mu_0$ at $t=1$, so that there exists $t_0\in(0,1)$ with $\Gcal(t_0u)=\mu$. Since the rescaling does not change the zero set, $\{t_0u=0\}=\{u=0\}$, the function $t_0u$ is admissible in the characterization \eqref{caractLambda_s} and therefore
$$
\Lambda_{s,\mu}(\alpha)\le\Phi_{s,G}(t_0u).
$$
By \eqref{sandwich} applied to $\Gcal$, from $t_0\le1$ we get $t_0^{p^+}\mu_0\le\Gcal(t_0u)=\theta\mu_0\le t_0^{p^-}\mu_0$, that is
$$
t_0^{p^+}\le\theta\le t_0^{p^-},
$$
and in particular $t_0\le\theta^{1/p^+}$. Using \eqref{sandwich} once more, now for the modular $\Phi_{s,G}$,
$$
\Lambda_{s,\mu}(\alpha)\le\Phi_{s,G}(t_0u)\le t_0^{p^-}\Phi_{s,G}(u)\le\theta^{\frac{p^-}{p^+}}\Lambda_{s,\mu_0}(\alpha).
$$
For the reverse inequality let $u$ be an extremal for $\Lambda_{s,\mu}(\alpha)$ and let $t_1>1$ be such that $\Gcal(t_1u)=\mu_0$, which exists because $\Gcal(tu)\to+\infty$ as $t\to\infty$. As before $t_1u$ is admissible in \eqref{caractLambda_s}, and \eqref{sandwich} gives
$$
t_1^{p^-}\le\frac{1}{\theta}\le t_1^{p^+},
$$
so that $t_1\le\theta^{-1/p^-}$ and
$$
\Lambda_{s,\mu_0}(\alpha)\le\Phi_{s,G}(t_1u)\le t_1^{p^+}\Phi_{s,G}(u)\le\theta^{-\frac{p^+}{p^-}}\Lambda_{s,\mu}(\alpha).
$$
We have thus proved that
$$
\theta^{\frac{p^+}{p^-}}\Lambda_{s,\mu_0}(\alpha)\le\Lambda_{s,\mu}(\alpha)\le\theta^{\frac{p^-}{p^+}}\Lambda_{s,\mu_0}(\alpha)\qquad\text{for }\theta<1.
$$
The case $\mu>\mu_0$ follows by exchanging the roles of $\mu$ and $\mu_0$, and the case $\mu=\mu_0$ is trivial. Finally, both bounds in \eqref{sandwichLambdas} tend to $\Lambda_{s,\mu_0}(\alpha)$ as $\theta\to1$, which gives the continuity.
\end{proof}
\begin{obs}\label{monoLambda}
Proposition \ref{prop:escala} gives monotonicity of the functions
$$
\mu\mapsto \frac{\Lambda_{s,\mu}(\alpha)}{\mu^{\frac{p^-}{p^+}}}\text{ and }\mu\mapsto \frac{\Lambda_{s,\mu}(\alpha)}{\mu^{\frac{p^+}{p^-}}},
$$
more precisely if $\mu_0<\mu$ then
$$
\left (\frac{\mu}{\mu_0}\right )^{\frac{p^-}{p^+}}\Lambda_{s,\mu_0}(\alpha)\leq \Lambda_{s,\mu}(\alpha)\leq \left (\frac{\mu}{\mu_0}\right )^{\frac{p^+}{p^-}}\Lambda_{s,\mu_0}(\alpha),
$$
hence
$$
\frac{\Lambda_{s,\mu_0}(\alpha)}{\mu_0^{\frac{p^-}{p^+}}}\leq \frac{\Lambda_{s,\mu}(\alpha)}{\mu^{\frac{p^-}{p^+}}}\text{ and }\frac{\Lambda_{s,\mu}(\alpha)}{\mu^{\frac{p^+}{p^-}}}\leq \frac{\Lambda_{s,\mu_0}(\alpha)}{\mu_0^{\frac{p^+}{p^-}}}.
$$
That is the functions $\mu\mapsto \frac{\Lambda_{s,\mu}(\alpha)}{\mu^{\frac{p^-}{p^+}}}\text{ and }\mu\mapsto \frac{\Lambda_{s,\mu}(\alpha)}{\mu^{\frac{p^+}{p^-}}}$ are nondecreasing and nonincreasing respectively.
\end{obs}
Remark \ref{monoLambda} gives the limit values of $\Lambda_{s,\mu}(\alpha)$ at the endpoints $0$ and $\infty$. Indeed, putting $\mu=1$ we have
$$
\Lambda_{s,\mu_0}(\alpha)\leq \mu_0^{\frac{p^-}{p^+}}\Lambda_{s,1}(\alpha)\qquad \text{for }0<\mu_0<1,
$$
and so
$$
\lim_{\mu_0\downarrow 0}\Lambda_{s,\mu_0}(\alpha)=0.
$$
Putting $\mu_0=1$ we obtain
$$
\mu^{\frac{p^-}{p^+}}\Lambda_{s,1}(\alpha)\leq \Lambda_{s,\mu}(\alpha)\qquad \text{for }1<\mu<+\infty,
$$
hence
$$
\lim_{\mu\to\infty}\Lambda_{s,\mu}(\alpha)=\infty.
$$
An interesting consequence of Proposition \ref{prop:escala} is contained in the following corollary.
\begin{cor}\label{propLambda}
Let $s,\alpha\in (0,1)$. Then
\begin{enumerate}
\item $\Lambda_{s,\mu}(\alpha)$ is strictly increasing in $\mu$.
\item $\Lambda_{s,\mu}(\alpha)$ is locally Lipschitz continuous in $\mu$.
\end{enumerate}
\end{cor}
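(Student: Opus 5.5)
Both assertions follow from Proposition \ref{prop:escala}, more precisely from its sharper form in Remark \ref{monoLambda}, together with the positivity and finiteness of $\Lambda_{s,\mu}(\alpha)$.

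\emph{Strict monotonicity.} Fix $0<\mu_0<\mu$. Remark \ref{monoLambda} gives
$$
\Lambda_{s,\mu}(\alpha)\ \ge\ \Big(\frac{\mu}{\mu_0}\Big)^{\frac{p^-}{p^+}}\Lambda_{s,\mu_0}(\alpha).
$$
The factor $(\mu/\mu_0)^{p^-/p^+}$ is strictly greater than $1$, because $\mu/\mu_0>1$ and $p^-/p^+>0$. Moreover $0<\Lambda_{s,\mu_0}(\alpha)<\infty$. Positivity comes from Remark \ref{obs:funcional}(2), which gives $\Lambda\ge\mu_0/C$ since each $\lambda_{s,\mu_0}(A)\ge\mu_0/C$. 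Finiteness comes from Theorem \ref{exop}: $\Lambda_{s,\mu_0}(\alpha)$ equals the modular of some function, and Theorem \ref{teo:existeA} shows that an admissible competitor exists for a closed obstacle $A$ of measure $\alpha|\Omega|$ leaving an open part of $\Omega$. Multiplying a positive finite number by a factor larger than $1$ gives a strict inequality, so $\Lambda_{s,\mu}(\alpha)>\Lambda_{s,\mu_0}(\alpha)$.

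\emph{Local Lipschitz continuity.} Fix a compact interval $[a,b]\subset(0,\infty)$ and take $a\le\mu_0<\mu\le b$. Remark \ref{monoLambda} gives
$$
0\le \Lambda_{s,\mu}(\alpha)-\Lambda_{s,\mu_0}(\alpha)\le \Big(\Big(\frac{\mu}{\mu_0}\Big)^{\frac{p^+}{p^-}}-1\Big)\Lambda_{s,\mu_0}(\alpha).
$$
Set $q:=p^+/p^-\ge1$. By the mean value theorem,
$$
\Big(\frac{\mu}{\mu_0}\Big)^{q}-1=\frac{\mu^q-\mu_0^q}{\mu_0^q}\le \frac{q\,b^{q-1}}{a^q}\,(\mu-\mu_0).
$$
By monotonicity, $\Lambda_{s,\mu_0}(\alpha)\le\Lambda_{s,b}(\alpha)<\infty$. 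Combining these bounds gives
$$
|\Lambda_{s,\mu}(\alpha)-\Lambda_{s,\mu_0}(\alpha)|\le L\,|\mu-\mu_0|,\qquad L:=\frac{q\,b^{q-1}}{a^q}\Lambda_{s,b}(\alpha),
$$
which holds for all $\mu,\mu_0\in[a,b]$ by symmetry.

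The only step needing care is making sure $\Lambda_{s,\mu_0}(\alpha)$ is finite and positive, so that the factor comparisons are meaningful. Everything else is elementary calculus on the scaling inequality \eqref{sandwichLambdas}.
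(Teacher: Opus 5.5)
Your proof is correct and follows essentially the same route as the paper. Both parts come from the scaling inequality of Proposition \ref{prop:escala}: strict monotonicity uses the lower exponent $p^-/p^+$ together with positivity, and the Lipschitz estimate uses the upper exponent $p^+/p^-$, the mean value theorem for $t\mapsto t^{q}$, and monotonicity to bound $\Lambda$ by its value at the right endpoint. The only differences are that you normalize by $\mu_0$ and $\Lambda_{s,\mu_0}(\alpha)$ rather than by $\mu$ and $\Lambda_{s,\mu}(\alpha)$, which gives the slightly larger constant $q\,b^{q-1}a^{-q}\Lambda_{s,b}(\alpha)$ instead of the paper's $q\,\Lambda_{s,b}(\alpha)/a$, and that you check explicitly the positivity and finiteness of the constants, which the paper uses without comment.
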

\begin{proof}
Suppose that $\mu_0<\mu$. Then Proposition \ref{prop:escala} with $\theta=\mu_0/\mu<1$ and the mean value theorem imply that there exists $\eta\in (\theta,1)$ such that
\begin{equation}\label{paracoro}
0< (1-\theta^{\frac{p^-}{p^+}})\Lambda_{s,\mu}(\alpha)\leq \Lambda_{s,\mu}(\alpha)-\Lambda_{s,\mu_0}(\alpha)\leq (1-\theta^{\frac{p^+}{p^-}})\Lambda_{s,\mu}(\alpha)=\frac{p^+}{p^-}\eta^{\frac{p^+}{p^-}-1}\frac{(\mu-\mu_0)}{\mu}\Lambda_{s,\mu}(\alpha).
\end{equation}
The left hand side in \eqref{paracoro} establishes (1). Suppose now that $[m,M]\subset (0,+\infty)$ is an arbitrary interval and that $\mu_0,\mu\in [m,M]$ with $\mu_0<\mu$. Since $\eta<1$ and $p^-\leq p^+$ we have $\eta^{\frac{p^+}{p^-}-1}\leq 1$, so that \eqref{paracoro}, the monotonicity (1) and the bounds $\mu\geq m$ and $\Lambda_{s,\mu}(\alpha)\leq \Lambda_{s,M}(\alpha)$ imply
\begin{equation}\label{lipycota}
|\Lambda_{s,\mu}(\alpha)-\Lambda_{s,\mu_0}(\alpha)|\leq \frac{p^+\Lambda_{s,M}(\alpha)}{p^-m}|\mu-\mu_0|,
\end{equation}
and the case $\mu<\mu_0$ follows by exchanging the roles of $\mu$ and $\mu_0$. Therefore we can conclude (2) with Lipschitz constant $L:=\frac{p^+\Lambda_{s,M}(\alpha)}{p^-m}$.
\end{proof}
\subsection{The asymptotic behavior of $\Lambda_{s,\mu}(\alpha)/\mu$}\label{subsec:asintmu}
In order to analyze the precise asymptotic behavior of the quotients $\Lambda_{s,\mu}(\alpha)/\mu$ at the endpoints $0$ and $\infty$ we introduce for $s,\alpha\in(0,1)$, a Young function $M$ and $\tau>0$ the constants
\begin{align*}
E_W^{M}(\tau,\alpha)&:=\inf \left \{\iint_{\R^{2n}}M(D^su)\,d\nu_n\colon  u\in W_0^{s,M}(\Omega),|\{u=0\}\cap\Omega|\geq \alpha |\Omega|, \int_\Omega M(u)\,dx=\tau\right \}
\\
E_C^{M}(\tau,\alpha)&:=\inf \left \{\iint_{\R^{2n}}M(D^su)\,d\nu_n\colon  u\in C_c^{\infty}(\Omega),|\{u=0\}\cap\Omega|\geq \alpha |\Omega|, \int_\Omega M(u)\,dx=\tau\right \}.
\end{align*}
Both classes are nonempty --- it is enough to take a smooth function supported in a small ball contained in $\Omega$, suitably rescaled --- so that $E_W^M(\tau,\alpha)\leq E_C^M(\tau,\alpha)<+\infty$, and both constants are positive by Proposition \ref{prop:poincare} applied to $M$. The following result establishes the appropriate bounds for the $\liminf$ and the $\limsup$ of the quotients.
\begin{teo}\label{cotascociente}
Let $s,\alpha\in (0,1)$ be fixed. Assume that the limits defining $M_i(t)$ for $i=0,\infty$ exist, and that for every family $\{u_\mu\}$ of extremals for $\Lambda_{s,\mu}(\alpha)$ the normalized densities
\begin{equation}\label{NC}\tag{NC}
\left\{\frac{G(u_\mu)}{\mu}\right\}\qquad\text{are uniformly integrable in }\Omega
\end{equation}
(see Remark \ref{obs:NC} for sufficient conditions). Then
\begin{equation}\label{chain1}\tag{CH$0$}
\inf_{\tau>0}\frac{E_W^{M_0}(\tau,\alpha)}{\tau}\leq\liminf_{\mu\downarrow 0}\frac{\Lambda_{s,\mu}(\alpha)}{\mu}\leq\limsup_{\mu\downarrow 0}\frac{\Lambda_{s,\mu}(\alpha)}{\mu}\le\inf_{\tau>0}\frac{E_C^{M_0}(\tau,\alpha)}{\tau}
\end{equation}
and
\begin{equation}\label{chain2}\tag{CH$\infty$}
\inf_{\tau>0}\frac{E_W^{M_\infty}(\tau,\alpha)}{\tau}\leq\liminf_{\mu\to \infty}\frac{\Lambda_{s,\mu}(\alpha)}{\mu}\leq\limsup_{\mu\to \infty}\frac{\Lambda_{s,\mu}(\alpha)}{\mu}\le\inf_{\tau>0}\frac{E_C^{M_\infty}(\tau,\alpha)}{\tau}.
\end{equation}
\end{teo}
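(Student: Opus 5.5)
The plan is to prove \eqref{chain1} in detail; \eqref{chain2} follows by the same argument with $\varepsilon\to\infty$ in place of $\varepsilon\to0$ and $M_\infty$ in place of $M_0$. The basic tool is the family of rescaled Young functions
$$
G_\varepsilon(t):=\frac{G(\varepsilon t)}{G(\varepsilon)} .
$$
By \eqref{sandwich} they satisfy \eqref{Liebermann} with the same exponents $p^\pm$, uniformly in $\varepsilon$, and
$$
\min\{t^{p^-},t^{p^+}\}\le G_\varepsilon(t)\le\max\{t^{p^-},t^{p^+}\}.
$$
By hypothesis $G_\varepsilon\to M_0$ pointwise. Since the $G_\varepsilon$ are convex and locally uniformly bounded, the convergence is locally uniform on $\R$.

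\emph{Upper bound.} Fix $\tau>0$ and $u\in C_c^\infty(\Omega)$ admissible for $E_C^{M_0}(\tau,\alpha)$, and set $\mu_\varepsilon:=\Gcal(\varepsilon u)$. The map $\varepsilon\mapsto\mu_\varepsilon$ is continuous, increasing and tends to $0$ as $\varepsilon\to0$. Hence every sufficiently small $\mu$ equals $\mu_\varepsilon$ for some $\varepsilon=\varepsilon(\mu)$, with $\varepsilon(\mu)\to0$ as $\mu\to0$. The function $\varepsilon u$ has the same zero set as $u$, so Lemma \ref{lema:caract} gives
$$
\frac{\Lambda_{s,\mu_\varepsilon}(\alpha)}{\mu_\varepsilon}\le\frac{\Phi_{s,G}(\varepsilon u)/G(\varepsilon)}{\Gcal(\varepsilon u)/G(\varepsilon)}
=\frac{\iint G_\varepsilon(D^su)\,d\nu_n}{\int_\Omega G_\varepsilon(u)\,dx}.
$$
Both integrals pass to the limit by dominated convergence, using the domination $\max\{|t|^{p^-},|t|^{p^+}\}$.
\begin{itemize}
\item In the denominator, $u$ is bounded with bounded support.
\item In the numerator, $|D^su(x,y)|\le C\min\{|x-y|^{1-s},|x-y|^{-s}\}$, and $D^su=0$ unless $x$ or $y$ lies in $\supp u$, which is compact. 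Hence $|D^su|^{q}$ is $\nu_n$-integrable for $q=p^\pm$, since $q\ge1$ and $n$-dimensional integrability of the kernels $|z|^{q(1-s)-n}$ near $0$ and $|z|^{-qs-n}$ near $\infty$ holds.
\end{itemize}
Therefore the ratio tends to $\iint M_0(D^su)\,d\nu_n/\tau$, so $\limsup_{\mu\downarrow0}\Lambda_{s,\mu}(\alpha)/\mu\le\iint M_0(D^su)\,d\nu_n/\tau$. Taking the infimum over $u$ and then over $\tau$ gives the right-hand inequality in \eqref{chain1}.

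\emph{Lower bound.} Take $\mu_k\downarrow0$ realizing the $\liminf$, and nonnegative extremals $u_k$ (Lemma \ref{lema:signo}, Theorem \ref{exop}). Choose $\varepsilon_k$ with $G(\varepsilon_k)=\mu_k$, so $\varepsilon_k\to0$, and set $v_k:=u_k/\varepsilon_k$. Then
$$
\int_\Omega G_{\varepsilon_k}(v_k)\,dx=1,\qquad \iint G_{\varepsilon_k}(D^sv_k)\,d\nu_n=\frac{\Lambda_{s,\mu_k}(\alpha)}{\mu_k},\qquad |\{v_k=0\}\cap\Omega|\ge\alpha|\Omega|.
$$
The right-hand side of the middle identity is bounded by the upper bound already proved. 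The lower sandwich then bounds $\iint\min\{|D^sv_k|^{p^-},|D^sv_k|^{p^+}\}\,d\nu_n$ and $\int_\Omega\min\{v_k^{p^-},v_k^{p^+}\}\,dx$. Since $\min\{t^{p^-},t^{p^+}\}\ge t^{p^+}-1$ for $t\ge0$, this controls $v_k$ in $L^1(\Omega)$ and gives a uniform modulus of continuity for translations, $\int|v_k(x+h)-v_k(x)|\,dx\to0$ as $h\to0$ uniformly in $k$. Here one splits $D^sv_k$ into small and large values and integrates over $|h|\le\rho$. The Fréchet--Kolmogorov theorem then yields $v_k\to v$ in $L^1$ and a.e., along a subsequence.

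With this convergence the three limit properties of $v$ follow.
\begin{itemize}
\item By locally uniform convergence, $G_{\varepsilon_k}(D^sv_k)\to M_0(D^sv)$ a.e., and Fatou's lemma gives $\iint M_0(D^sv)\,d\nu_n\le\liminf_k\Lambda_{s,\mu_k}(\alpha)/\mu_k<\infty$. Hence $v\in W_0^{s,M_0}(\Omega)$, using Remark \ref{obs:Matuyoung}.
\item The zero-set condition $|\{v=0\}\cap\Omega|\ge\alpha|\Omega|$ passes to the limit by the Fatou argument of Theorem \ref{exop}.
\item Here \eqref{NC} is essential. Since $G_{\varepsilon_k}(v_k)=G(u_k)/\mu_k$ is uniformly integrable and converges a.e. to $M_0(v)$, Vitali's theorem gives $\int_\Omega M_0(v)\,dx=1$.
\end{itemize}
Thus $v$ is admissible for $E_W^{M_0}(1,\alpha)$, and $\liminf_{\mu\downarrow0}\Lambda_{s,\mu}(\alpha)/\mu\ge E_W^{M_0}(1,\alpha)\ge\inf_\tau E_W^{M_0}(\tau,\alpha)/\tau$.

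The step I expect to be the main obstacle is the compactness, because the rescaled functions live in the varying spaces $W^{s,G_{\varepsilon_k}}$ and no fixed reflexive space controls all of them. My first attempt is the translation estimate derived from the uniform lower bound $\min\{t^{p^-},t^{p^+}\}$. If that proves awkward, the fallback is to compare with the fixed Young function $\min\{t^{p^-},t^{p^+}\}$ (convexified), for which the compact embedding of Proposition \ref{prop:espacios}(4) applies.
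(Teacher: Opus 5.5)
Your proposal is correct, and its skeleton matches the paper's proof. It uses the same rescaling $v=u_\mu/r(\mu)$ with $G(r(\mu))=\mu$, and dominated convergence with majorant $|D^su|^{p^-}+|D^su|^{p^+}$ for the upper bound. For the lower bound it also uses Fatou on the numerator, the zero-set argument of Theorem~\ref{exop}, and Vitali under \eqref{NC} for the normalization.

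The only genuine difference is the compactness step. The paper uses exactly your fallback. The convex envelope $\Psi^*$ of $\min\{t^{p^-},t^{p^+}\}$ is a Young function satisfying \eqref{Liebermann} and lies below every $G(rt)/G(r)$. Hence the $v_\mu$ are bounded in the fixed space $W_0^{s,\Psi^*}(\Omega)$, and Proposition~\ref{prop:espacios}(4) applies.

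Your primary route through Fr\'echet--Kolmogorov also works. Averaging over $y\in B_{|h|}(x)$ gives $\int_{\R^n}|v(x+h)-v(x)|\,dx\le C|h|^{-n}\iint_{\{|x-y|<2|h|\}}|v(x)-v(y)|\,dx\,dy$. Now split at $|D^sv|=1$:
\begin{itemize}
\item on the small part use $|v(x)-v(y)|<|x-y|^s$;
\item on the large part use $|D^sv|\le|D^sv|^{p^-}$ together with $dx\,dy=|x-y|^n\,d\nu_n$;
\item in both, only pairs with $x$ or $y$ in $\Omega$ contribute.
\end{itemize}
This yields the bound $C|h|^s$, uniformly in $k$.

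What each route buys: yours is self-contained and needs neither the convex envelope nor the Orlicz compact embedding. The paper's is shorter given Proposition~\ref{prop:espacios}, and its $L_{\Psi^*}$ compactness is reused in Theorem~\ref{limitecociente} to get $L^{p^-}$ precompactness. Your estimate, run with $p^-$-th powers, would give that as well.

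One slip: $\min\{t^{p^-},t^{p^+}\}\ge t^{p^+}-1$ fails for large $t$ when $p^+>p^-$. The correct inequality is $\min\{t^{p^-},t^{p^+}\}\ge t^{p^-}-1$, which still gives the $L^1$ (indeed $L^{p^-}$) bound you need.
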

\begin{proof}
We first prove \eqref{chain1}.

\emph{Upper bound.} Let $u\in C_c^\infty(\Omega)$, $u\not\equiv0$, with $|\{u=0\}\cap\Omega|\ge\alpha|\Omega|$. The map $k\mapsto\int_\Omega G(ku)\,dx$ is continuous, vanishes at $k=0$ and tends to $\infty$ as $k\to\infty$, so there exists $k(\mu)>0$ with $\int_\Omega G(k(\mu)u)\,dx=\mu$; moreover $k(\mu)\to0$ as $\mu\downarrow0$, since $\mu\ge\min\{k(\mu)^{p^-},k(\mu)^{p^+}\}\Gcal(u)$ by \eqref{sandwich}. Since $C_c^\infty(\Omega)\subset W_0^{s,G}(\Omega)$ and the rescaling does not change the zero set, $k(\mu)u$ is admissible in \eqref{caractLambda_s} and therefore
$$
\frac{\Lambda_{s,\mu}(\alpha)}{\mu}\le\frac1\mu\iint_{\R^{2n}}G(k(\mu)D^su)\,d\nu_n
=\frac{\displaystyle\iint_{\R^{2n}}\frac{G(k(\mu)D^su)}{G(k(\mu))}\,d\nu_n}{\displaystyle\int_\Omega\frac{G(k(\mu)u)}{G(k(\mu))}\,dx}.
$$
By \eqref{sandwich} the two integrands are bounded, uniformly in $k(\mu)$, by $|D^su|^{p^-}+|D^su|^{p^+}$ and by $|u|^{p^-}+|u|^{p^+}$, which are integrable with respect to $\nu_n$ and to $dx$ respectively because $u\in C_c^\infty(\Omega)$ and $s<1$. Since the limit defining $M_0$ exists, dominated convergence gives
$$
\limsup_{\mu\downarrow0}\frac{\Lambda_{s,\mu}(\alpha)}{\mu}\le\frac{\iint_{\R^{2n}}M_0(D^su)\,d\nu_n}{\int_\Omega M_0(u)\,dx},
$$
and taking the infimum over all such $u$ with $\int_\Omega M_0(u)\,dx=\tau$, and then over $\tau>0$,
\begin{equation}\label{c1}
\limsup_{\mu\downarrow0}\frac{\Lambda_{s,\mu}(\alpha)}{\mu}\le\inf_{\tau>0}\frac{E_C^{M_0}(\tau,\alpha)}{\tau}.
\end{equation}
\emph{Lower bound.} By \eqref{c1} there are $\mu_0\in(0,1)$ and $C>0$ with $\Lambda_{s,\mu}(\alpha)/\mu\le C$ for every $0<\mu\le\mu_0$. For such $\mu$ let $u_\mu$ be an extremal for $\Lambda_{s,\mu}(\alpha)$, given by Theorem \ref{exop}, let $r(\mu)>0$ be defined by $G(r(\mu))=\mu$ (observe that $r(\mu)\downarrow 0$ if $\mu\downarrow 0$) and put $v_\mu:=u_\mu/r(\mu)$. Since $r(\mu)v_\mu=u_\mu$ and $G(r(\mu))=\mu$,
\begin{equation}\label{normaliza}
\iint_{\R^{2n}}\frac{G(r(\mu)D^sv_\mu)}{G(r(\mu))}\,d\nu_n=\frac{\Lambda_{s,\mu}(\alpha)}{\mu}\le C,
\qquad
\int_\Omega\frac{G(r(\mu)v_\mu)}{G(r(\mu))}\,dx=\frac{\Gcal(u_\mu)}{\mu}=1 .
\end{equation}
Let $\Psi^*$ be the convex envelope of $\min\{t^{p^-},t^{p^+}\}$; it is a Young function satisfying \eqref{Liebermann} with the same exponents, and \eqref{sandwich} evaluated at $r(\mu)$ gives $\Psi^*(t)\le G(r(\mu)t)/G(r(\mu))$ for every $t>0$. Hence the modulars $\iint_{\R^{2n}}\Psi^*(|D^sv_\mu|)\,d\nu_n$ are bounded by $C$, and Proposition \ref{prop:poincare} and Remark \ref{obs:normamodular} applied to $\Psi^*$ show that $\{v_\mu\}_{0<\mu\le\mu_0}$ is bounded in $W_0^{s,\Psi^*}(\Omega)$.

Choose $\mu_j\downarrow0$ with $\Lambda_{s,\mu_j}(\alpha)/\mu_j\to\liminf_{\mu\downarrow0}\Lambda_{s,\mu}(\alpha)/\mu$ and write $v_j:=v_{\mu_j}$, $r_j:=r(\mu_j)$. By Proposition \ref{prop:espacios} applied to $\Psi^*$ the embedding $W_0^{s,\Psi^*}(\Omega)\hookrightarrow L_{\Psi^*}(\Omega)$ is compact, so that we may assume, after passing to a subsequence, that $v_j\to v$ in $L_{\Psi^*}(\Omega)$ and almost everywhere in $\Omega$, in particular $D^sv_j\to D^sv$ a.e. in $\R^{2n}$. The functions $t\mapsto G(rt)/G(r)$ are convex and converge pointwise to $M_0$, hence uniformly on compact subsets of $[0,\infty)$, so that
$$
\frac{G(r_jD^sv_j)}{G(r_j)}\to M_0(D^sv)\ \text{ a.e. in }\R^{2n},
\qquad
\frac{G(r_jv_j)}{G(r_j)}=\frac{G(u_{\mu_j})}{\mu_j}\to M_0(v)\ \text{ a.e. in }\Omega .
$$
Fatou's lemma applied to the first convergence gives
\begin{equation}\label{fatounum}
\liminf_{\mu\downarrow0}\frac{\Lambda_{s,\mu}(\alpha)}{\mu}=\lim_{j\to\infty}\iint_{\R^{2n}}\frac{G(r_jD^sv_j)}{G(r_j)}\,d\nu_n\ \ge\ \iint_{\R^{2n}}M_0(D^sv)\,d\nu_n ,
\end{equation}
so that $v\in W_0^{s,M_0}(\Omega)$; by \eqref{NC}, Vitali's convergence theorem and \eqref{normaliza}, the second one gives $\int_\Omega M_0(v)\,dx=1$. Finally, writing $E_j:=\{v_j=0\}\cap\Omega$, the reverse Fatou lemma yields $|\limsup_jE_j|\ge\limsup_j|E_j|\ge\alpha|\Omega|$, and $v=0$ almost everywhere in $\limsup_jE_j$, whence $|\{v=0\}\cap\Omega|\ge\alpha|\Omega|$. Therefore $v$ is admissible in the definition of $E_W^{M_0}(1,\alpha)$ and, by \eqref{fatounum},
$$
\inf_{\tau>0}\frac{E_W^{M_0}(\tau,\alpha)}{\tau}\le E_W^{M_0}(1,\alpha)\le\iint_{\R^{2n}}M_0(D^sv)\,d\nu_n\le\liminf_{\mu\downarrow0}\frac{\Lambda_{s,\mu}(\alpha)}{\mu}.
$$
The proof of \eqref{chain2} is identical: since \eqref{sandwich} holds at every scale, both the dominating function of the upper bound and the minorant $\Psi^*$ of the lower bound remain available when $k(\mu),r(\mu)\to\infty$; one only replaces $M_0$ by $M_\infty$ and reads the threshold $\mu_0$ as a lower bound.
\end{proof}
\begin{obs}\label{obs:NC}
Observe that condition \eqref{NC} is only used in the two lower bounds; the upper bounds hold without it. It is a non-concentration requirement on the extremals, and it is not needed when $G(t)=t^p$: in that case $G(u_\mu)/\mu=|v_\mu|^p$, the subsequence provided by the compact embedding converges strongly in $L^p(\Omega)$ and therefore $\int_\Omega|v|^p\,dx=\lim_{j\to\infty}\int_\Omega|v_j|^p\,dx=1$, with no need of Vitali's theorem. More generally, \eqref{NC} holds whenever the rescaled extremals are bounded in $L^q(\Omega)$ for some $q>p^+$, because then the bound $G(u_\mu)/\mu\le |v_\mu|^{p^-}+|v_\mu|^{p^+}$ coming from \eqref{sandwich} provides an equi--integrable majorant.

Let us observe that in the unconstrained problem no hypothesis of this kind is needed. There the extremals solve the Euler--Lagrange equation in the whole of $\Omega$, so that the global regularity of eigenfunctions of the fractional $g$--Laplacian \cite{FBSalortVivas} applies and yields $L^\infty$ bounds for them, from which \eqref{NC} follows as soon as such bounds are uniform along the family. In the presence of the obstacle the test functions must vanish on $A$, no equation is available in the whole of $\Omega$, and this is precisely why \eqref{NC} is imposed here. We expect it to be removable by a concentration--compactness argument in the spirit of \cite{FBSilva}; we do not pursue this here.
\end{obs}
Theorem \ref{cotascociente} gives bounds for the lower and upper limits, but what we really expect is to identify the limit of the quotients. To this end we need the following two important lemmas.
\begin{lema}\label{lema:densidad}
Let $s\in (0,1)$ and $\tau>0$ be fixed, let $\Omega\subset\R^n$ be a bounded Lipschitz domain and let $M$ be a Young function satisfying \eqref{Liebermann}. Then
$$
E_W^{M}(\tau,\alpha)=E_C^{M}(\tau,\alpha),
$$
for every $\alpha\in(0,1)$ at which the function $\beta \mapsto E_C^{M}(\tau,\beta)$ is left-continuous. Since this function is nondecreasing, the identity holds for all but countably many $\alpha\in(0,1)$.
\end{lema}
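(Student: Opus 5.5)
Since $C_c^\infty(\Omega)\subset W_0^{s,M}(\Omega)$, the inequality $E_W^M(\tau,\alpha)\le E_C^M(\tau,\alpha)$ is immediate. The whole content of the lemma is the reverse inequality, and the plan is a density argument that gives up an arbitrarily small amount of zero set, which is then paid for by the left-continuity in $\beta$.

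Fix $\varepsilon>0$ and $u\in W_0^{s,M}(\Omega)$ admissible for $E_W^M(\tau,\alpha)$ with $\Phi_{s,M}(u)\le E_W^M(\tau,\alpha)+\varepsilon$. Since $\Omega$ is Lipschitz and $M$ satisfies \eqref{Liebermann}, $C_c^\infty(\Omega)$ is dense in $W_0^{s,M}(\Omega)$ (the result of \cite{BBO} quoted after Proposition \ref{prop:espacios}). The approximants must nevertheless keep a large zero set, so instead of approximating $u$ directly we first truncate near zero: put $u_\eta:=(u-\eta)_+-(u+\eta)_-$. This is a $1$-Lipschitz image of $u$, so it lies in $W_0^{s,M}(\Omega)$ and $\Phi_{s,M}(u_\eta)\le\Phi_{s,M}(u)$. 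It also vanishes on $\{|u|\le\eta\}$, which contains $\{u=0\}$, and $u_\eta\to u$ in $W_0^{s,M}$ as $\eta\to0$ by dominated convergence.

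Now fix $\beta<\alpha$. Take $\phi_k\in C_c^\infty(\Omega)$ with $\phi_k\to u_\eta$ in norm, hence modularly and a.e. (Lemma \ref{lema:convG}). Then compose with a smooth odd function $h_\eta$ that vanishes on $[-\eta/2,\eta/2]$ and is close to the identity; concretely one can use $h(t)=\mathrm{sign}(t)(|t|-\eta/2)_+$ mollified, or better, apply this truncation before mollifying. The functions $w_k:=h_\eta(\phi_k)$ are in $C_c^\infty(\Omega)$. The set $\{w_k=0\}$ contains $\{|\phi_k|\le\eta/2\}$, and by Egorov/convergence in measure this is eventually larger than $\{u=0\}$ minus a set of measure $<\alpha-\beta$. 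Moreover $w_k\to h_\eta(u_\eta)$ in $W^{s,M}$, using the Lipschitz bound together with a.e. convergence and Brezis--Lieb/Vitali exactly as at the end of Theorem \ref{teo:conexion}. Letting $\eta\to0$, $h_\eta(u_\eta)\to u$.

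The energy constraint $\int_\Omega M(w)\,dx=\tau$ is restored by the rescaling $t_kw_k$ with $t_k\to1$. This uses the continuity of $\Gcal_M$ and \eqref{sandwich}, as in Proposition \ref{prop:escala}, and rescaling does not change the zero set. We obtain $E_C^M(\tau,\beta)\le\Phi_{s,M}(t_kw_k)\le E_W^M(\tau,\alpha)+2\varepsilon$ for $k$ large. Letting $\beta\uparrow\alpha$ and using left-continuity gives $E_C^M(\tau,\alpha)\le E_W^M(\tau,\alpha)$.

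The monotonicity claim holds because $\beta\mapsto E_C^M(\tau,\beta)$ is an infimum over shrinking classes, hence nondecreasing. A monotone function has at most countably many discontinuities, so the identity fails for at most countably many $\alpha$.

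The main obstacle is the simultaneous control of the zero-set measure and the modular convergence for the smooth approximants. Composition with a nonsmooth truncation is not continuous in $W^{s,M}$ in general. The plan is to get around this by first truncating the limit $u$, which only decreases the modular, and then using pointwise truncation of $\phi_k$ together with the dominated-convergence and Brezis--Lieb argument to obtain strong convergence.
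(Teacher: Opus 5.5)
Your argument is correct, but it keeps the zero set of the smooth approximants large by a different mechanism than the paper. The paper takes an extremal $u\ge0$ for $E_W^M(\tau,\alpha)$. It truncates the approximants with the nonsmooth map $t\mapsto(t-\delta)_+$, which gives Lipschitz functions vanishing on the open sets $\{\varphi_j<\delta\}$, and then mollifies a second time. The modular is controlled only from above, by $|D^sw_j|\le|D^s\varphi_j|$ plus Jensen and the translation invariance of $\nu_n$. The zero set survives through the inner parallel sets $V_j^{\varepsilon}$, and the lost mass is paid for by the factor $(\tau/\tau_\delta)^{p^+/p^-}$.

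You instead compose with a smooth odd shrinkage $h_\eta$, so you land directly in $C_c^\infty(\Omega)$ with the open zero set $\{|\phi_k|<\eta/2\}$. This removes the second mollification, Jensen and the inner parallel sets. Using a near-minimizer also means you need neither existence nor sign of an extremal for $E_W^M$. The only care needed is the order of limits: fix $\eta$ in terms of $\varepsilon$ before letting $k\to\infty$, since for fixed $\eta$ the factors $t_k$ do not tend to $1$.

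Two of your steps are more than you need.
\begin{itemize}
\item The preliminary truncation $u\mapsto u_\eta$ is redundant.
\item Strong convergence of $w_k$ is unnecessary, because $\Phi_{s,M}(w_k)\le\Phi_{s,M}(\phi_k)$ and $|w_k-h_\eta(u_\eta)|\le|\phi_k-u_\eta|$ already give everything you use.
\end{itemize}
If you keep the strong convergence, do not justify it by Brezis--Lieb as in Theorem~\ref{teo:conexion}, since that presupposes convergence of the modulars. Use the generalized dominated convergence theorem instead, with the majorant $2^{p^+-1}\big(M(D^s\phi_k)+M(D^su_\eta)\big)$, which converges in $L^1(\nu_n)$ by Lemma~\ref{lema:convG}. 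The same argument shows that for $0<s<1$, composition with any Lipschitz function vanishing at $0$ is continuous in $W^{s,M}(\R^n)$. So your closing worry is unfounded: the smoothness of $h_\eta$ only serves to land in $C_c^\infty(\Omega)$.
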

\begin{proof}
Observe that by definition $E_W^{M}(\tau,\alpha)\leq	E_C^{M}(\tau,\alpha)$. For the reverse inequality let $u$ be an extremal for $E_W^M(\tau,\alpha)$, which exists by Theorem \ref{exop} applied to $M$ and can be taken nonnegative by Lemma \ref{lema:signo}, and set $Z:=\{u=0\}\cap\Omega$, so that $|Z|\ge\alpha|\Omega|$.

Since $\partial\Omega$ is Lipschitz and $M$ satisfies \eqref{Liebermann}, the space $W_0^{s,M}(\Omega)$ is the closure of $C_c^\infty(\Omega)$ in $W^{s,M}(\R^n)$; see the comments after Proposition \ref{prop:espacios}. Hence there are functions $\varphi_j\in C_c^\infty(\Omega)$ such that $\varphi_j\to u$ in $W^{s,M}(\R^n)$ and, after passing to a subsequence, a.e. in $\R^n$. Applying Lemma \ref{lema:convG} with $dx$ and with $\nu_n$, and recalling Remark \ref{obs:normamodular}, we obtain
\begin{equation}\label{convmodulares}
\int_\Omega M(\varphi_j)\,dx\to \int_\Omega M(u)\,dx=\tau,\qquad \iint_{\R^{2n}}M(D^s\varphi_j)\,d\nu_n\to \iint_{\R^{2n}}M(D^su)\,d\nu_n=E_W^M(\tau,\alpha).
\end{equation}
Fix $\delta>0$ and set $w_j:=(\varphi_j-\delta)_+$ and $V_j:=\{\varphi_j<\delta\}$. Since $t\mapsto (t-\delta)_+$ is nondecreasing, $1$--Lipschitz and vanishes on $(-\infty,\delta]$, the function $w_j$ is nonnegative, Lipschitz, and satisfies
$$
\supp w_j\subset \supp \varphi_j,\qquad 0\le w_j\le |\varphi_j|,\qquad |D^sw_j|\le |D^s\varphi_j|,\qquad w_j\equiv0\ \text{ in } V_j,
$$
and the set $V_j$ is open, because $\varphi_j$ is continuous. This is the point of the truncation: it turns the measurable constraint $|\{u=0\}\cap\Omega|\ge\alpha|\Omega|$ into an open one, and open constraints are stable under mollification. Indeed, since $\varphi_j\to u=0$ a.e. in $Z$, for a.e. $x\in Z$ we have $x\in V_j$ for every $j$ large enough, and hence Fatou's lemma gives
\begin{equation}\label{fatouniveles}
\liminf_{j\to\infty}|V_j\cap\Omega|\geq |Z|\geq \alpha|\Omega|.
\end{equation}
Let $\rho_\varepsilon$ be a standard mollifier and, for $0<\varepsilon<d(\supp \varphi_j,\partial\Omega)$, set $w_j^\varepsilon:=w_j\ast\rho_\varepsilon\in C_c^\infty(\Omega)$. Since $D^sw_j^\varepsilon(x,y)=\int_{\R^n}D^sw_j(x-z,y-z)\rho_\varepsilon(z)\,dz$ and $\nu_n$ is invariant under the simultaneous translations $(x,y)\mapsto(x-z,y-z)$, Jensen's inequality yields
\begin{equation}\label{jensenmolli}
\iint_{\R^{2n}}M(D^sw_j^\varepsilon)\,d\nu_n\leq \iint_{\R^{2n}}M(D^sw_j)\,d\nu_n\leq \iint_{\R^{2n}}M(D^s\varphi_j)\,d\nu_n.
\end{equation}
Moreover $w_j^\varepsilon\to w_j$ uniformly as $\varepsilon\downarrow 0$, with supports contained in a fixed compact subset of $\Omega$, so that $\int_\Omega M(w_j^\varepsilon)\,dx\to\int_\Omega M(w_j)\,dx$; and $w_j^\varepsilon$ vanishes on the open set $V_j^\varepsilon:=\{x\in V_j\colon d(x,\partial V_j)>\varepsilon\}$, whose measure increases to $|V_j\cap\Omega|$ as $\varepsilon\downarrow 0$. We may therefore choose, for every $j$, a parameter $\varepsilon_j>0$ such that the function $\psi_j:=w_j^{\varepsilon_j}\in C_c^\infty(\Omega)$ satisfies
$$
|V_j^{\varepsilon_j}\cap\Omega|\geq |V_j\cap\Omega|-\frac1j,\qquad \left |\int_\Omega M(\psi_j)\,dx-\int_\Omega M(w_j)\,dx\right |\leq \frac1j.
$$
Since $|w_j-(u-\delta)_+|\le|\varphi_j-u|$, Lemma \ref{lema:convG} gives $\int_\Omega M(w_j)\,dx\to\int_\Omega M((u-\delta)_+)\,dx=:\tau_\delta$ as $j\to\infty$, and $\tau_\delta\uparrow\tau$ as $\delta\downarrow0$ by monotone convergence; in particular $0<\tau_\delta\le\tau$ for every $\delta$ small enough, which we assume from now on.

Finally we pick $c_j>0$ such that $\int_\Omega M(c_j\psi_j)\,dx=\tau$, which is possible for $j$ large enough because $\int_\Omega M(\psi_j)\,dx\to\tau_\delta>0$. By \eqref{sandwich},
$$
\min\{c_j^{p^-},c_j^{p^+}\}\int_\Omega M(\psi_j)\,dx\leq \tau\leq \max\{c_j^{p^-},c_j^{p^+}\}\int_\Omega M(\psi_j)\,dx,
$$
and the first inequality gives $\kappa_j:=\max\{c_j^{p^-},c_j^{p^+}\}\leq \max\{1,\theta_j^{p^+/p^-}\}$, where $\theta_j:=\tau\big(\int_\Omega M(\psi_j)\,dx\big)^{-1}\to\tau/\tau_\delta$; hence
\begin{equation}\label{tautaudelta}
\limsup_{j\to\infty}\kappa_j\leq \left (\frac{\tau}{\tau_\delta}\right )^{p^+/p^-}.
\end{equation}
The function $c_j\psi_j\in C_c^\infty(\Omega)$ satisfies $\int_\Omega M(c_j\psi_j)\,dx=\tau$ and $|\{c_j\psi_j=0\}\cap\Omega|\geq |V_j^{\varepsilon_j}\cap\Omega|\geq(\alpha-\eta_j)|\Omega|$, where
$$
\eta_j:=\left (\alpha-\frac{|V_j^{\varepsilon_j}\cap\Omega|}{|\Omega|}\right )_+\longrightarrow 0
$$
by \eqref{fatouniveles} and the choice of $\varepsilon_j$. Since $\eta_j\to 0$ as $j\to\infty$, for every $j$ large enough we have $\eta_j<\alpha$, and $c_j\psi_j$ is therefore admissible in the definition of $E_C^M(\tau,\alpha-\eta_j)$, so that by \eqref{sandwich} and \eqref{jensenmolli}
$$
E_C^M(\tau,\alpha-\eta_j)\leq \iint_{\R^{2n}}M(c_jD^s\psi_j)\,d\nu_n\leq \kappa_j\iint_{\R^{2n}}M(D^s\psi_j)\,d\nu_n\leq \kappa_j\iint_{\R^{2n}}M(D^s\varphi_j)\,d\nu_n.
$$
Letting $j\to\infty$, the left-continuity of $\beta\mapsto E_C^M(\tau,\beta)$ at $\alpha$, together with \eqref{tautaudelta} and \eqref{convmodulares}, gives
$$
E_C^M(\tau,\alpha)\leq \left (\frac{\tau}{\tau_\delta}\right )^{p^+/p^-}E_W^M(\tau,\alpha).
$$
Letting now $\delta\downarrow 0$, so that $\tau_\delta\uparrow\tau$, we conclude that $E_C^M(\tau,\alpha)\leq E_W^M(\tau,\alpha)$.
\end{proof}
\begin{obs}
The regularity of $\partial\Omega$ enters only through the density of $C_c^\infty(\Omega)$ in $W_0^{s,M}(\Omega)$, so that Lemma \ref{lema:densidad} holds, with the same proof, for every bounded open set $\Omega$ for which such a density result is available; for example, the bounded open sets with the star-shaped property with respect to a ball contained in $\Omega$.
\end{obs}
Taking into account the above remark we recall the following definition.
\begin{defi}\label{def:estrellado}
Let $\Omega\subset \R^n$ be a domain. We say that $\Omega$ has the property $\mathcal{S}$ if it is bounded and there exists a ball $B=B_r(x_0)\subset \Omega$ such that $\Omega$ is star-shaped with respect to every point of $B$, that is, for every $x\in \Omega$ and every $y\in B$,
$$
tx+(1-t)y\in \Omega\qquad \forall t\in (0,1).
$$
\end{defi}
Two comments on Definition \ref{def:estrellado} are in order. On the one hand, every bounded convex domain has the property $\mathcal{S}$, since it is star-shaped with respect to each of the balls it contains; the property is however strictly weaker than convexity, as an $L$--shaped domain shows. On the other hand, every domain with the property $\mathcal{S}$ is a bounded Lipschitz domain. Indeed, if $x\in\partial\Omega$ then the convex hull of $\{x\}$ and $B$ is contained in $\overline\Omega$, and it contains the cone with vertex $x$, axis $(x_0-x)/|x_0-x|$ and half-aperture $\theta$ given by $\sin\theta=r/|x-x_0|\geq r/\operatorname{diam}(\Omega)$; since this aperture is independent of $x$, the set $\Omega$ enjoys the uniform $\varepsilon$--cone property, which characterizes the Lipschitz regularity of $\partial\Omega$, see \cite[Theorem 2.4.7]{HP}. In particular both Lemma \ref{lema:densidad} and Lemma \ref{continuidadE} are available under the property $\mathcal{S}$, and each of them uses a different part of it: the whole ball is what yields the Lipschitz regularity of $\partial\Omega$, and hence the density of $C_c^\infty(\Omega)$ in $W_0^{s,M}(\Omega)$, while only its center is needed for the rescaling argument, since $\lambda(\Omega-x_0)+x_0\subset\Omega$ for every $\lambda\in(0,1)$. Let us finally stress that the property $\mathcal{S}$ is strictly stronger than the Lipschitz regularity of $\partial\Omega$: an annulus is smooth but it is not star-shaped with respect to any of its points.
\begin{lema}\label{continuidadE}
Let $s\in (0,1)$, $\tau>0$ and let $M$ be a Young function satisfying \eqref{Liebermann}. If $\Omega\subset \R^n$ has the property $\mathcal{S}$ then the function $\alpha \mapsto E_C^{M}(\tau,\alpha)$ is continuous in $(0,1)$.
\end{lema}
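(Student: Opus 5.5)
The plan is to derive both one-sided continuities from a single quantitative estimate obtained by dilating admissible functions about the center $x_0$ of the ball $B$ in Definition \ref{def:estrellado}. Since $\beta\mapsto E_C^M(\tau,\beta)$ is nondecreasing (the admissible class shrinks as $\beta$ grows), it suffices to show the following. For $\beta\in(0,1)$ and $\lambda\in(0,1)$, set $\beta_\lambda:=\beta+(1-\beta)(1-\lambda^n)$. Then
$$
E_C^M(\tau,\beta_\lambda)\le F(\lambda)\,E_C^M(\tau,\beta),\qquad F(\lambda):=\lambda^{\,n-np^+/p^--sp^+}\longrightarrow1\ \text{ as }\lambda\uparrow1 .
$$
The key feature is that $F$ depends only on $n,s,p^\pm$, and not on $\beta$ or on the function.

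\emph{Construction.} Take $u\in C_c^\infty(\Omega)$ admissible for $E_C^M(\tau,\beta)$ and set $u_\lambda(x):=u\big(x_0+(x-x_0)/\lambda\big)$.
\begin{itemize}
\item \emph{Support.} By the property $\mathcal{S}$ we have $\lambda(\Omega-x_0)+x_0\subset\Omega$, so $\supp u_\lambda$ is a compact subset of $\Omega$ and $u_\lambda\in C_c^\infty(\Omega)$.
\item \emph{Zero set.} The zero set of $u_\lambda$ in $\Omega$ contains $\Omega\setminus(\lambda(\Omega-x_0)+x_0)$, of measure $(1-\lambda^n)|\Omega|$. It also contains the dilated copy of $\{u=0\}\cap\Omega$, of measure at least $\lambda^n\beta|\Omega|$. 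These two pieces are disjoint, so $|\{u_\lambda=0\}\cap\Omega|\ge\beta_\lambda|\Omega|$.
\item \emph{Scaling of the two integrals.} The change of variables gives $\int_\Omega M(u_\lambda)\,dx=\lambda^n\tau$. For the modular, $D^su_\lambda(x,y)=\lambda^{-s}D^su(x',y')$, and $d\nu_n$ scales by the factor $\lambda^{2n}/\lambda^n=\lambda^n$. Hence $\Phi_{s,M}(u_\lambda)=\lambda^n\iint M(\lambda^{-s}D^su)\,d\nu_n$.
\end{itemize}

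\emph{Restoring the energy level.} We choose $c\ge1$ with $\int_\Omega M(cu_\lambda)\,dx=\lambda^n\int_\Omega M(cu)\,dx=\tau$. The lower half of \eqref{sandwich} (valid for $M$ by hypothesis) gives $c^{p^-}\tau\le\tau/\lambda^n$, that is, $c\le\lambda^{-n/p^-}$. The upper half of \eqref{sandwich}, with $c\lambda^{-s}\ge1$, gives
$$
\Phi_{s,M}(cu_\lambda)\le\lambda^n\,(c\lambda^{-s})^{p^+}\,\Phi_{s,M}(u)\le F(\lambda)\,\Phi_{s,M}(u).
$$
Since $cu_\lambda$ is admissible for $E_C^M(\tau,\beta_\lambda)$, taking the infimum over $u$ yields the claimed estimate.

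\emph{Right continuity at $\alpha$.} Fix $\alpha\in(0,1)$ and take $\beta=\alpha$. Given $\gamma>\alpha$ close to $\alpha$, choose $\lambda$ with $\alpha_\lambda=\gamma$; then $\lambda\uparrow1$ as $\gamma\downarrow\alpha$. Monotonicity and the estimate give
$$
E_C^M(\tau,\alpha)\le E_C^M(\tau,\gamma)\le F(\lambda)\,E_C^M(\tau,\alpha),
$$
and letting $\gamma\downarrow\alpha$ proves right continuity.

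\emph{Left continuity at $\alpha$.} For $\beta<\alpha$ choose $\lambda=\lambda(\beta)$ with $(1-\beta)(1-\lambda^n)=\alpha-\beta$, so that $\beta_\lambda=\alpha$; then $\lambda(\beta)\to1$ as $\beta\uparrow\alpha$. This gives
$$
E_C^M(\tau,\beta)\le E_C^M(\tau,\alpha)\le F(\lambda(\beta))\,E_C^M(\tau,\beta).
$$
The values are finite and positive, as noted before Theorem \ref{cotascociente}. Letting $\beta\uparrow\alpha$ gives $\lim_{\beta\uparrow\alpha}E_C^M(\tau,\beta)=E_C^M(\tau,\alpha)$.

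The main point needing care is that the zero-set gain and the energy loss under dilation are controlled \emph{uniformly}, by quantities depending only on $\lambda$. Indeed, in the left-continuity step the near-optimal functions change with $\beta$, so the bound must not depend on the particular function. This is exactly what the homogeneity bounds in \eqref{sandwich} deliver. Only the star-shapedness with respect to the center $x_0$ is used, not the whole ball.
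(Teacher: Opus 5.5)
Your proof is correct and is essentially the paper's argument. It uses the same dilation about the center $x_0$ with $\lambda^n=(1-\alpha)/(1-\beta)$, the same renormalization constant $c\le\lambda^{-n/p^-}$, and the same factor $\lambda^{\,n-np^+/p^--sp^+}$; the paper merely phrases the resulting inequality as the monotonicity of $\alpha\mapsto(1-\alpha)^\gamma E_C^M(\tau,\alpha)$ combined with that of $E_C^M(\tau,\cdot)$, whereas you squeeze the one-sided limits directly.
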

\begin{proof}
Assume first that the center of the ball in the property $\mathcal{S}$ is $x_0=0$. Let $0<\beta<\alpha<1$ and let $\varepsilon>0$. There exists $u\in C_c^\infty(\Omega)$ such that
$$
\iint_{\R^{2n}}M(D^s u)\,d\nu_n< E_C^M(\tau,\beta)+\varepsilon,
$$
$|\{u=0\}\cap \Omega|\geq \beta|\Omega|$ and $\int_\Omega M(u)\,dx=\tau$. Putting
$$
\lambda:=\left (\frac{1-\alpha}{1-\beta} \right)^{\frac{1}{n}}
$$
we consider the rescaled function $u_\lambda(x):=u(x/\lambda)$. By the star-shaped property $\mathcal{S}$ we have $\supp(u_\lambda)=\lambda \supp(u)\subset \lambda \Omega\subset \Omega$, and therefore $u_\lambda \in C_c^\infty(\Omega)$ and
\begin{align*}
|\{u_\lambda \neq0\}\cap \Omega|&=|\lambda \{u\neq 0\}\cap\Omega|
\\
&=\lambda^n|\{u\neq 0\}\cap\Omega|
\\
&=\lambda^n(|\Omega|-|\{u=0\}\cap\Omega|)
\\
&\leq \lambda^n(|\Omega|-\beta |\Omega|)
\\
&=(1-\alpha)|\Omega|,
\end{align*}
hence $|\{u_\lambda =0\}\cap\Omega|=|\Omega|-|\{u_\lambda \neq0\}\cap\Omega|\geq \alpha |\Omega|$. Also, we can choose $c>0$ such that $\int_\Omega M(cu_\lambda)\,dx=\tau$ and taking into account that $\int_\Omega M(u_\lambda)\,dx=\lambda^n \int_\Omega M(u)\,dx$ we have from \eqref{sandwich}
$$
\min\{c^{p^-},c^{p^+}\} \leq \left (\frac{1}{\lambda}\right )^{n}\leq \max\{c^{p^-},c^{p^+}\} ,
$$
in particular $1<c\leq 1/\lambda^{n/p^{-}}$. Since $|\{cu_\lambda =0\}\cap\Omega|=|\{u_\lambda =0\}\cap\Omega|$ the above says that $c u_\lambda$ is admissible for $E_C^M(\tau,\alpha)$ and then using change of variable and \eqref{sandwich}
\begin{align*}
E_C^M(\tau,\alpha)&\leq \iint_{\R^{2n}}M(cD^s u_\lambda)\,d\nu_n
\\
&=\lambda^{n}\iint_{\R^{2n}}M(c\lambda ^{-s}D^s u)\,d\nu_n
\\
&\leq \lambda^{n}c^{p^+}\lambda^{-s p^{+}}\iint_{\R^{2n}}M(D^s u)\,d\nu_n
\\
&\leq \lambda^{n-\frac{n p^+}{p^-}-sp^+}\iint_{\R^{2n}}M(D^s u)\,d\nu_n
\\
&= \left (\frac{1-\beta}{1-\alpha}\right )^{\frac{p^+-p^-}{p^-}+\frac{sp^+}{n}}\iint_{\R^{2n}}M(D^s u)\,d\nu_n
\\
&<\left (\frac{1-\beta}{1-\alpha}\right )^{\frac{p^+-p^-}{p^-}+\frac{sp^+}{n}}E_C^M(\tau,\beta)+\left (\frac{1-\beta}{1-\alpha}\right )^{\frac{p^+-p^-}{p^-}+\frac{sp^+}{n}}\varepsilon,
\end{align*}
for all $\varepsilon>0$ and so
$$
E_C^M(\tau,\alpha)\leq \left (\frac{1-\beta}{1-\alpha}\right )^{\frac{p^+-p^-}{p^-}+\frac{sp^+}{n}}E_C^M(\tau,\beta)
$$
with $\gamma:=(p^+-p^-)/p^-+sp^+/n>0$  we obtain that the function $\alpha\mapsto (1-\alpha)^\gamma E_C^M(\tau,\alpha)$ is nonincreasing. On the other hand, the class of admissible functions in the definition of $E_C^M(\tau,\alpha)$ shrinks as $\alpha$ increases, so that $\alpha\mapsto E_C^M(\tau,\alpha)$ is nondecreasing. Both functions being monotone, for every $\alpha\in(0,1)$ the one-sided limits $E_C^M(\tau,\alpha^\pm):=\lim_{\beta\to\alpha^\pm}E_C^M(\tau,\beta)$ exist, and the second monotonicity gives
$$
E_C^M(\tau,\alpha^-)\leq E_C^M(\tau,\alpha)\leq E_C^M(\tau,\alpha^+),
$$
while the first one, since $\beta\mapsto(1-\beta)^\gamma$ is continuous and positive, gives the three reversed inequalities. Hence the three values coincide and $\alpha\mapsto E_C^M(\tau,\alpha)$ is continuous in $(0,1)$. Finally, if $x_0\neq0$ it is enough to observe that both $dx$ and $\nu_n$ are invariant under translations, so that neither the constants $E_C^M(\tau,\cdot)$ nor the property $\mathcal{S}$ are affected when $\Omega$ is replaced by $\Omega-x_0$; one may therefore assume $x_0=0$ without loss of generality.
\end{proof}
With the hypotheses of Lemma \ref{continuidadE} we can introduce the constant $E^M(\tau,\alpha):=E_C^M(\tau,\alpha)=E_W^M(\tau,\alpha)$. Now we are in a position to give the main result of this section, which is also the second main result of the paper.
\begin{teo}\label{limitecociente}
Let $s,\alpha\in (0,1)$ be fixed and $\Omega\subset \R^n$ satisfying property $\mathcal{S}$. Assume that the limits defining $M_i(t)$ for $i=0,\infty$ exist, and that for every family $\{u_\mu\}$ of extremals for $\Lambda_{s,\mu}(\alpha)$ the condition \eqref{NC} is fulfilled. Then
\begin{equation}\label{convmucero}
\lim_{\mu\downarrow 0}\frac{\Lambda_{s,\mu}(\alpha)}{\mu}=\inf_{\tau>0}\frac{E^{M_0}(\tau,\alpha)}{\tau}
\end{equation}
and
\begin{equation}\label{convmuinfinito}
\lim_{\mu\to \infty}\frac{\Lambda_{s,\mu}(\alpha)}{\mu}=\inf_{\tau>0}\frac{E^{M_\infty}(\tau,\alpha)}{\tau}
\end{equation}
hold. Moreover, let $r(\mu)>0$ be defined by $G(r(\mu))=\mu$, for every $\mu>0$ let $u_\mu$ be an extremal for $\Lambda_{s,\mu}(\alpha)$ and let $i\in\{0,\infty\}$. If $\{\mu_j\}_{j\in\N}$ is any sequence of positive numbers with $\mu_j\downarrow 0$ when $i=0$ and $\mu_j\to+\infty$ when $i=\infty$, then the sequence $\{u_{\mu_j}/r(\mu_j)\}_{j\in\N}$ is precompact in $L^{p^-}(\Omega)$ and each of its accumulation points is an extremal for $E^{M_i}(1,\alpha)$.
\end{teo}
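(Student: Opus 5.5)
The plan is to squeeze the chains \eqref{chain1} and \eqref{chain2} of Theorem \ref{cotascociente} by showing that their two outer terms coincide. By Remark \ref{obs:Matuyoung}, $M_0$ and $M_\infty$ are Young functions satisfying \eqref{Liebermann}. Since $\Omega$ has the property $\mathcal{S}$, it is a bounded Lipschitz domain, so Lemma \ref{continuidadE} shows that $\beta\mapsto E_C^{M_i}(\tau,\beta)$ is continuous on $(0,1)$ for every $\tau>0$; in particular it is left-continuous at $\alpha$. Lemma \ref{lema:densidad} then gives $E_W^{M_i}(\tau,\alpha)=E_C^{M_i}(\tau,\alpha)=E^{M_i}(\tau,\alpha)$ for every $\tau>0$. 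Hence the infima over $\tau$ of the two outer terms agree, the $\liminf$ and $\limsup$ of $\Lambda_{s,\mu}(\alpha)/\mu$ coincide, and \eqref{convmucero}--\eqref{convmuinfinito} follow. (The hypotheses of Theorem \ref{cotascociente}, namely existence of the limits $M_i$ and \eqref{NC}, are assumed here as well.)

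For the convergence of the rescaled extremals, fix $i$ and a sequence $\mu_j$, and set $v_j:=u_{\mu_j}/r(\mu_j)$. I would redo the lower-bound argument of Theorem \ref{cotascociente}, now along an arbitrary sequence rather than one realizing the $\liminf$. By \eqref{normaliza} and the minorant $\Psi^*$, the family $\{v_j\}$ is bounded in $W_0^{s,\Psi^*}(\Omega)$. The compact embedding of Proposition \ref{prop:espacios} then gives, along a subsequence, $v_j\to v$ in $L_{\Psi^*}(\Omega)$ and a.e.

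To obtain precompactness in $L^{p^-}(\Omega)$, note that $\Psi^*(t)\ge c\,t^{p^-}$ for $t\ge1$ up to an affine correction, while $\Omega$ is bounded. Hence convergence in $L_{\Psi^*}(\Omega)$, which is equivalent to modular convergence by \eqref{sandwich}, implies convergence in $L^{p^-}(\Omega)$: on $\{|v_j-v|\le1\}$ use boundedness of $\Omega$ and dominated convergence, and on the complement use $t^{p^-}\lesssim\Psi^*(t)$. Since the subsequence was extracted from an arbitrary subsequence, this yields precompactness.

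It remains to identify the limit. Exactly as in \eqref{fatounum}, but using Fatou along the full sequence, I get
$$
\iint M_i(D^sv)\,d\nu_n\le\liminf_j\frac{\Lambda_{s,\mu_j}(\alpha)}{\mu_j}=\inf_{\tau>0}\frac{E^{M_i}(\tau,\alpha)}{\tau}.
$$
Moreover, \eqref{NC} together with Vitali's theorem gives $\int_\Omega M_i(v)\,dx=1$, and the reverse Fatou argument gives $|\{v=0\}\cap\Omega|\ge\alpha|\Omega|$. Therefore $E^{M_i}(1,\alpha)\le\iint M_i(D^sv)\,d\nu_n\le\inf_\tau E^{M_i}(\tau,\alpha)/\tau\le E^{M_i}(1,\alpha)$, so every accumulation point $v$ is an extremal for $E^{M_i}(1,\alpha)$.

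The main obstacle I expect is making the $L^{p^-}$ claim rigorous uniformly in both regimes, since $\Psi^*$ grows like $t^{p^+}$ near $0$ and like $t^{p^-}$ near infinity. The same computation shows only a posteriori that $\inf_\tau$ is attained at $\tau=1$.
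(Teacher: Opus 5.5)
Your proposal is correct and follows the paper's proof. You squeeze the chains of Theorem \ref{cotascociente} using Lemmas \ref{continuidadE} and \ref{lema:densidad}, then rerun the lower-bound argument along the given sequence, getting $L^{p^-}$ precompactness from the compact embedding for $\Psi^*$; the paper uses the explicit bound $\int_\Omega|w|^{p^-}dx\le\delta^{p^-}|\Omega|+\delta^{-(p^+-p^-)}\Psi^*(1)^{-1}\int_\Omega\Psi^*(w)\,dx$ where you use a split plus dominated convergence, which is equally valid.

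The small differences are in your favor. Closing the chain with the trivial inequality $\inf_{\tau>0}E^{M_i}(\tau,\alpha)/\tau\le E^{M_i}(1,\alpha)$ avoids the paper's appeal to homogeneity of $M_i$. The ``obstacle'' you anticipate does not arise: \eqref{sandwich} for $\Psi^*$ gives $\Psi^*(t)\ge\Psi^*(1)\,t^{p^-}$ for $t\ge1$ with no affine correction, and the same minorant $\Psi^*$ serves both regimes.
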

\begin{proof}
By Remark \ref{obs:Matuyoung}, $M_0$ and $M_\infty$ are Young functions satisfying \eqref{Liebermann} with the same exponents $p^\pm$, and since $\Omega$ has the property $\mathcal{S}$ it is a bounded Lipschitz domain. Hence Lemma \ref{continuidadE} applied to $M_i$ shows that $\alpha\mapsto E_C^{M_i}(\tau,\alpha)$ is continuous in $(0,1)$ for every $\tau>0$, in particular left-continuous, and Lemma \ref{lema:densidad} then gives
$$
E_W^{M_i}(\tau,\alpha)=E_C^{M_i}(\tau,\alpha)=E^{M_i}(\tau,\alpha)\qquad\text{for every }\tau>0,\ \alpha\in(0,1),\ i=0,\infty .
$$
Taking the infimum over $\tau>0$, the two extreme terms of each of the chains \eqref{chain1} and \eqref{chain2} in Theorem \ref{cotascociente} coincide, and therefore \eqref{convmucero} and \eqref{convmuinfinito} hold.

Finally, let $i$ and $\{\mu_j\}_{j\in\N}$ be as in the statement and write $v_j:=u_{\mu_j}/r(\mu_j)$. By the proof of Theorem \ref{cotascociente} the sequence $\{v_j\}_{j\in\N}$ is bounded in $W_0^{s,\Psi^*}(\Omega)$ and hence, by Proposition \ref{prop:espacios} applied to $\Psi^*$, precompact in $L_{\Psi^*}(\Omega)$; it is then precompact in $L^{p^-}(\Omega)$, since
$$
\int_\Omega|w|^{p^-}\,dx\leq \delta^{p^-}|\Omega|+\frac{\delta^{-(p^+-p^-)}}{\Psi^*(1)}\int_\Omega\Psi^*(w)\,dx\qquad\text{for every }\delta\in(0,1),
$$
as follows by splitting $\Omega$ into $\{|w|\leq\delta\}$ and $\{|w|>\delta\}$ and applying \eqref{sandwich} for $\Psi^*$ twice on the second set. Let $v$ be an accumulation point, so that along a subsequence $v_j\to v$ in $L^{p^-}(\Omega)$ and almost everywhere in $\Omega$. Arguing as in the proof of Theorem \ref{cotascociente}, $v$ is admissible in the definition of $E^{M_i}(1,\alpha)$ and
$$
\iint_{\R^{2n}}M_i(D^sv)\,d\nu_n\leq\liminf_{j\to\infty}\frac{\Lambda_{s,\mu_j}(\alpha)}{\mu_j}=\lim_{\mu\to i}\frac{\Lambda_{s,\mu}(\alpha)}{\mu}=\inf_{\tau>0}\frac{E^{M_i}(\tau,\alpha)}{\tau}=E^{M_i}(1,\alpha),
$$
where the second equality holds because the limit exists by the first part of the theorem, and the last one follows from the homogeneity of $M_i$. Since the reverse inequality holds by admissibility, $v$ is an extremal for $E^{M_i}(1,\alpha)$.
\end{proof}
In Theorem \ref{limitecociente} the limits defining $M_i$ are assumed to exist, so that by Remark \ref{obs:Matuyoung} the functions $M_i$ for $i=0,\infty$ are the powers $M_0(t)=t^{p_0}$ and $M_\infty(t)=t^{p_\infty}$ respectively; in this case we have homogeneity thus $E^{M_0}(\tau,\alpha)=\tau E^{M_0}(1,\alpha)$	and $E^{M_\infty}(\tau,\alpha)=\tau E^{M_\infty}(1,\alpha)$	 therefore
$$
\lim_{\mu\downarrow 0}\frac{\Lambda_{s,\mu}(\alpha)}{\mu}=E^{M_0}(1,\alpha)
$$
and
$$
\lim_{\mu\to \infty}\frac{\Lambda_{s,\mu}(\alpha)}{\mu}=E^{M_\infty}(1,\alpha),
$$
where, for $i=0,\infty$,
$$
E^{M_i}(1,\alpha)=\inf\left \{\iint_{\R^{2n}}\frac{|u(x)-u(y)|^{p_i}}{|x-y|^{n+sp_i}}\,dxdy\colon u\in W_0^{s,p_i}(\Omega),\ \|u\|_{L^{p_i}(\Omega)}=1,\ |\{u=0\}\cap\Omega|\geq\alpha|\Omega|\right \}.
$$
These right-hand sides are themselves optimal design constants for the fractional $p_i$--Laplacian. Indeed, given $p>1$ let
$$
\Lambda_s^{(p)}(\alpha):=\inf\left \{\iint_{\R^{2n}}\frac{|u(x)-u(y)|^{p}}{|x-y|^{n+sp}}\,dxdy\colon u\in W_0^{s,p}(\Omega),\ \|u\|_{L^{p}(\Omega)}=1,\ |\{u=0\}\cap\Omega|\geq\alpha|\Omega|\right \},
$$
that is, the constant \eqref{caractLambda_s} with $\mu=1$ for the Young function $G(t)=t^p$, which by Lemma \ref{lema:caract} can equivalently be written as $\inf_{A\in\mathcal{A}_\alpha}\lambda_{s,1}(A)$ for that same $G$. With this notation,
\begin{equation}\label{limiteFBS}
\lim_{\mu\downarrow 0}\frac{\Lambda_{s,\mu}(\alpha)}{\mu}=\Lambda_s^{(p_0)}(\alpha),\qquad \lim_{\mu\to \infty}\frac{\Lambda_{s,\mu}(\alpha)}{\mu}=\Lambda_s^{(p_\infty)}(\alpha).
\end{equation}
It is worth being precise about the sense in which this contains the homogeneous case. If $G(t)=t^p$ then $M_0=M_\infty=G$ and, by homogeneity, $\Lambda_{s,\mu}(\alpha)=\mu\,\Lambda_s^{(p)}(\alpha)$ for every $\mu>0$: the quotient does not depend on $\mu$ and Theorem \ref{limitecociente} carries no information. The content of the theorem lies therefore in the non-homogeneous case: for a general $G$ the quotient $\Lambda_{s,\mu}(\alpha)/\mu$ does vary with $\mu$, and \eqref{limiteFBS} says that at the two ends of the scale it is governed by the optimal design problem for the Matuszewska exponents $p_0$ and $p_\infty$ of $G$, which in general do not coincide and are not ordered.

Let us finally point out that the constants $\Lambda_s^{(p)}(\alpha)$ are the counterparts, for the exterior condition $u=0$ in $\R^n\setminus\Omega$, of the optimal design constants studied in \cite{FBS}, where the Gagliardo seminorm is taken over $\Omega\times\Omega$ and the admissible functions are those of $W^{s,p}(\Omega)$ vanishing on the obstacle, with no exterior condition. The two problems do not coincide: if $u$ vanishes outside $\Omega$ then
$$
\iint_{\R^{2n}}\frac{|u(x)-u(y)|^{p}}{|x-y|^{n+sp}}\,dxdy=\iint_{\Omega\times\Omega}\frac{|u(x)-u(y)|^{p}}{|x-y|^{n+sp}}\,dxdy+2\int_\Omega|u(x)|^p\kappa(x)\,dx,\quad \kappa(x):=\int_{\R^n\setminus\Omega}\frac{dy}{|x-y|^{n+sp}},
$$
and $\kappa(x)\simeq \operatorname{d}(x,\partial\Omega)^{-sp}$, so that in general only an inequality between the two constants is available.

We close this subsection with an explicit example showing that the two limit problems in \eqref{limiteFBS} are genuinely different, so that Theorem \ref{limitecociente} does carry information. For a bounded open set $U\subset \R^n$ we denote by $\Lambda_s^{(p)}(\alpha,U)$ the constant $\Lambda_s^{(p)}(\alpha)$ with $\Omega$ replaced by $U$.
\begin{ejem}\label{ejem:distintos}
Let $G$ be given by
$$
G(t)=
\begin{cases}
t^3 & \text{if }0\leq t\leq 1,
\\[2pt]
\frac{3}{2}t^2-\frac{1}{2} & \text{if }t\geq 1 .
\end{cases}
$$
Its derivative is $g(t)=3t^2$ for $0\leq t\leq 1$ and $g(t)=3t$ for $t\geq 1$, which is continuous, nondecreasing and vanishes only at the origin, so that $G$ is a Young function. Moreover
$$
\frac{tg(t)}{G(t)}=3\quad\text{for }0<t\leq 1,\qquad \frac{tg(t)}{G(t)}=\frac{3}{\frac{3}{2}-\frac{1}{2t^2}}\in(2,3]\quad\text{for }t\geq 1,
$$
and therefore \eqref{Liebermann} holds with $p^-=2$ and $p^+=3$. If $0<\varepsilon\leq \min\{1,1/t\}$ then both $\varepsilon$ and $\varepsilon t$ belong to $[0,1]$ and hence $G(\varepsilon t)/G(\varepsilon)=t^3$, while for $\varepsilon\geq \max\{1,1/t\}$
$$
\frac{G(\varepsilon t)}{G(\varepsilon)}=\frac{3\varepsilon^2t^2-1}{3\varepsilon^2-1}\longrightarrow t^2\qquad \text{as }\varepsilon\to\infty .
$$
Thus the two $\limsup$ in \eqref{Matu} are limits and
$$
M_0(t)=t^3,\qquad M_\infty(t)=t^2,\qquad \text{that is}\qquad p_0=3,\quad p_\infty=2,
$$
so that the hypotheses of Theorem \ref{limitecociente} concerning $M_0$ and $M_\infty$ are fulfilled. Observe in passing that $p_0>p_\infty$, which shows that the two Matuszewska exponents need not be ordered, although both of them belong to $[p^-,p^+]$ by \eqref{cotasMatu}.

Let now $\Omega=B_R$ be a ball of radius $R>0$, which has the property $\mathcal{S}$ since it is bounded and convex, and let $\alpha\in(0,1)$. We claim that for every $p>1$
\begin{equation}\label{escalabolas}
\Lambda_s^{(p)}(\alpha,B_R)=R^{-sp}\,\Lambda_s^{(p)}(\alpha,B_1).
\end{equation}
Indeed, given $u$ admissible for $\Lambda_s^{(p)}(\alpha,B_1)$ we put $u_R(x):=R^{-n/p}u(x/R)$. Then $u_R$ vanishes almost everywhere outside $B_R$, $\|u_R\|_{L^p(B_R)}=\|u\|_{L^p(B_1)}=1$ and $\{u_R=0\}\cap B_R=R\big (\{u=0\}\cap B_1\big )$, so that
$$
|\{u_R=0\}\cap B_R|=R^n|\{u=0\}\cap B_1|\geq \alpha R^n|B_1|=\alpha |B_R|,
$$
that is, $u_R$ is admissible for $\Lambda_s^{(p)}(\alpha,B_R)$. The map $u\mapsto u_R$ is a bijection between the two admissible classes, its inverse being given by the same formula with $1/R$ in place of $R$, and the change of variables $x=Rx'$, $y=Ry'$ gives
$$
\iint_{\R^{2n}}\frac{|u_R(x)-u_R(y)|^{p}}{|x-y|^{n+sp}}\,dxdy=\frac{R^{2n}}{R^{n}R^{n+sp}}\iint_{\R^{2n}}\frac{|u(x)-u(y)|^{p}}{|x-y|^{n+sp}}\,dxdy,
$$
whence \eqref{escalabolas} follows by taking infima.

Write $C_p:=\Lambda_s^{(p)}(\alpha,B_1)$, which is finite, since the admissible class is nonempty, and positive by Proposition \ref{prop:poincare}. By \eqref{escalabolas},
$$
\Lambda_s^{(3)}(\alpha,B_R)=\Lambda_s^{(2)}(\alpha,B_R)\qquad \Longleftrightarrow\qquad R^{s}=\frac{C_3}{C_2},
$$
so that the two constants are different for every $R>0$ except for the single radius $R_*:=(C_3/C_2)^{1/s}$. Let us finally check that \eqref{NC} is satisfied for this $G$ and this $\Omega$ whenever
\begin{equation}\label{condejem}
3s<n<6s .
\end{equation}
In the proof of Theorem \ref{cotascociente} it is shown, without using \eqref{NC}, that the rescaled extremals $v_\mu=u_\mu/r(\mu)$ are bounded in $W_0^{s,\Psi^*}(\Omega)$, where $\Psi^*$, the convex envelope of $\min\{t^2,t^3\}$, equals $t^3$ near the origin and $t^2$ for $t$ large. Since these functions vanish outside the bounded set $\Omega$, they belong to the space on which \cite[Theorem 6.1]{ACPS} operates, and the seminorm appearing there is the Gagliardo--Luxemburg seminorm $[\,\cdot\,]_{s,\Psi^*}$. The hypotheses of that theorem hold precisely because of the first inequality in \eqref{condejem}: the relevant integrand in \cite[(4.2)--(4.3)]{ACPS} equals $\tau^{-\frac{2s}{n-s}}$ near the origin, which is integrable if and only if $3s<n$, and $t^{-\frac{s}{n-s}}$ at infinity, whose integral diverges since $2s<n$. The Sobolev conjugate
$$
(\Psi^*)_{\frac ns}(t)=\Psi^*\big (H^{-1}(t)\big ),
\qquad
H(t)=\left (\int_0^t\left (\frac{\tau}{\Psi^*(\tau)}\right )^{\frac{s}{n-s}}\,d\tau\right )^{\frac{n-s}{n}},
$$
of \cite[(4.4)--(4.5)]{ACPS} is therefore well defined, and the behavior $\Psi^*(t)=t^2$ at infinity gives, up to multiplicative constants, $H(t)=t^{\frac{n-2s}{n}}$ and hence $(\Psi^*)_{\frac ns}(t)=t^{q}$ as $t\to\infty$, with $q:=2n/(n-2s)$. As $\Omega$ is bounded, \cite[Theorem 6.1]{ACPS} yields a constant $C>0$ independent of $\mu$ with $\|v_\mu\|_{L^q(\Omega)}\le C\,[v_\mu]_{s,\Psi^*}$, so that $\{v_\mu\}$ is bounded in $L^q(\Omega)$. The second inequality in \eqref{condejem} is exactly $q>3$, and \eqref{sandwich} evaluated at $a=r(\mu)$ gives
$$
\frac{G(u_\mu)}{\mu}=\frac{G(r(\mu)v_\mu)}{G(r(\mu))}\le\max\big\{|v_\mu|^{2},|v_\mu|^{3}\big\}\le |v_\mu|^{2}+|v_\mu|^{3},
$$
whose right hand side is bounded in $L^{q/3}(\Omega)$, uniformly in $\mu$, with $q/3>1$. Hence, by H\"older's inequality, the family $\{G(u_\mu)/\mu\}$ is uniformly integrable in $\Omega$ and \eqref{NC} holds. The range \eqref{condejem} is nonempty; it contains, for instance, $n=3$ with $1/2<s<1$.

Consequently, if $R\neq R_*$ and \eqref{condejem} holds, then \eqref{limiteFBS} gives
$$
\lim_{\mu\downarrow 0}\frac{\Lambda_{s,\mu}(\alpha)}{\mu}=R^{-3s}C_3\ \neq\ R^{-2s}C_2=\lim_{\mu\to \infty}\frac{\Lambda_{s,\mu}(\alpha)}{\mu} .
$$
In particular the quotient $\Lambda_{s,\mu}(\alpha)/\mu$ is not constant in $\mu$, and the optimal design problems which govern the two ends of the energy scale are those associated with two different exponents.
\end{ejem}
The example above distinguishes the two limit problems at the level of the constants. Whether the optimal configurations corresponding to the two ends of the energy scale are different sets is a subtler question, which we do not address here.
\subsection{Convergence of the optimal sets $A_\mu$}\label{subsec:conjuntosmu}
In this subsection we present an interesting consequence of Theorem \ref{limitecociente} about the optimal sets. First we need to recall the following two facts.

The following lemma about the convergence of the zero sets is needed; see \cite[Lemma 3.1]{FBGR} for details.
\begin{lema}\label{lema:simetrica}
Let $\{f_n\}_{n\in\N},f\colon \Omega\to \R_{\geq 0}$ be measurable functions such that $f_n\to f$ a.e. in $\Omega$. If $\lim_{n\to \infty}|\{f_n=0\}|=|\{f=0\}|$, then
$$
\lim_{n\to\infty}|\{f_n=0\}\symdiff \{f=0\}|=0.
$$
\end{lema}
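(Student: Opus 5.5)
The plan is to write the symmetric difference as a sum of two one-sided pieces and treat each piece with a different tool. Set $Z_n:=\{f_n=0\}$ and $Z:=\{f=0\}$. Then
$$
|Z_n\symdiff Z|=|Z_n\setminus Z|+|Z\setminus Z_n| .
$$
The piece $|Z_n\setminus Z|$ is controlled by pointwise convergence. The piece $|Z\setminus Z_n|$ is then recovered from the hypothesis that the measures converge. Since $\Omega$ is bounded, all these measures are finite, and the subtractions below are legitimate.

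\emph{Step 1.} I would show that $|Z_n\setminus Z|\to0$. Discard the null set where $f_n\not\to f$, and take $x\in\Omega\setminus Z$, so that $f(x)>0$. Then $f_n(x)\to f(x)>0$, hence $f_n(x)>0$ for all large $n$. This means $\chi_{Z_n\setminus Z}(x)\to0$ a.e. in $\Omega$. These characteristic functions are bounded by $1$, which is integrable on the bounded set $\Omega$. Dominated convergence then gives $|Z_n\setminus Z|\to0$.

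\emph{Step 2.} I would combine Step 1 with the hypothesis through the identity
$$
|Z\setminus Z_n|=|Z|-|Z\cap Z_n|=|Z|-|Z_n|+|Z_n\setminus Z| .
$$
The difference $|Z|-|Z_n|$ tends to $0$ by hypothesis, and $|Z_n\setminus Z|\to0$ by Step 1. Hence $|Z\setminus Z_n|\to0$, and adding the two pieces gives the claim.

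\emph{Main difficulty.} There is no real obstacle here. The only delicate point is to notice that pointwise convergence alone gives just one inclusion, namely $\limsup_n Z_n\subset Z$ up to a null set. Points of $Z$ may be approached by $f_n$ from strictly positive values, so convergence of the measures is exactly what rules out losing mass of $Z$. The nonnegativity of the $f_n$ and $f$ is not really needed in this argument. Finiteness of $|\Omega|$ is needed both for dominated convergence and for subtracting measures.
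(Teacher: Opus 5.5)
Your proof is correct and complete. Step 1 uses $\limsup_n Z_n\subset Z$ a.e. together with dominated convergence on the finite-measure set $\Omega$, and Step 2 amounts to the identity $|Z_n\symdiff Z|=|Z|-|Z_n|+2|Z_n\setminus Z|$; your side remarks, that nonnegativity is superfluous and finiteness of $|\Omega|$ is essential, are also accurate.

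The paper gives no proof of its own and simply cites \cite[Lemma 3.1]{FBGR}. Your argument is the standard one behind that reference, so there is nothing to add.
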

The second fact comes from Theorem \ref{teo:identif}: observe that $E^{M_i}(1,\alpha)$ is the constant $\Lambda_{s,1}(\alpha)$ for the Young function $M_i$ for $i\in \{0,\infty\}$, which satisfies \eqref{Liebermann} by Remark \ref{obs:Matuyoung}; hence the zero set of an extremal $u$ for $E^{M_i}(1,\alpha)$ satisfies $|\{u=0\}\cap\Omega|=\alpha|\Omega|$.

Now using the above results we obtain the following result about the convergence of the optimal sets.
\begin{teo}\label{converoptimos}
Assume the hypotheses of Theorem \ref{limitecociente}. Let $i\in \{0,\infty\}$ and, for every $\mu>0$, let $A_\mu\subset \Omega$ be an optimal set for $\Lambda_{s,\mu}(\alpha)$. If $\{\mu_j\}_{j\in\N}$ is a sequence with $\mu_j\downarrow 0$ when $i=0$ and $\mu_j\to+\infty$ when $i=\infty$, then there exists a subsequence, still denoted by $\{\mu_j\}_{j\in\N}$, such that
$$
\chi_{A_{\mu_j}}(x)\to \chi_{A^i}(x)\text{ strongly in }L^1(\Omega),
$$
where $A^i$ is an optimal set for $E^{M_i}(1,\alpha)$.
\end{teo}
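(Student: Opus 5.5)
The plan is to pass from optimal sets to extremals, use the convergence of rescaled extremals from Theorem \ref{limitecociente}, and then convert convergence of functions into convergence of zero sets with Lemma \ref{lema:simetrica}.

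\emph{Step 1: choose extremals.} For each $j$ let $u_j$ be an extremal for $\lambda_{s,\mu_j}(A_{\mu_j})$. It exists by Theorem \ref{teo:existeA}; if $A_{\mu_j}$ is not closed, I would instead take the extremal for $\Lambda_{s,\mu_j}(\alpha)$ given by Theorem \ref{exop}, whose zero set is itself an optimal set. By Lemma \ref{lema:signo} we may take $u_j\ge0$. As in the proof of Corollary \ref{coro:identif}, $u_j$ is an extremal for $\Lambda_{s,\mu_j}(\alpha)$, and
$$
A_{\mu_j}=\{u_j=0\}\cap\Omega
$$
up to a null set, with $|A_{\mu_j}|=\alpha|\Omega|$ by Theorem \ref{teo:identif}.

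\emph{Step 2: pass to the limit.} Put $v_j:=u_j/r(\mu_j)\ge0$, whose zero set coincides with that of $u_j$. By the last part of Theorem \ref{limitecociente}, a subsequence satisfies $v_j\to v$ in $L^{p^-}(\Omega)$ and, after a further subsequence, a.e. in $\Omega$. The limit $v\ge0$ is an extremal for $E^{M_i}(1,\alpha)$.

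\emph{Step 3: identify the limit set.} By the second fact recalled before the statement (Theorem \ref{teo:identif} applied to $M_i$), $|\{v=0\}\cap\Omega|=\alpha|\Omega|$. Set $A^i:=\{v=0\}\cap\Omega$. Then $v\in W_{A^i}^{s,M_i}(\Omega)$ realizes $E^{M_i}(1,\alpha)$, and the argument of part (1) of Theorem \ref{exop} shows that $A^i$ is an optimal set for $E^{M_i}(1,\alpha)$.

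\emph{Step 4: convergence of zero sets.} We now have $v_j\to v$ a.e. with $v_j,v\ge0$, and
$$
|\{v_j=0\}|=\alpha|\Omega|=|\{v=0\}|\quad\text{for every } j.
$$
Lemma \ref{lema:simetrica} therefore gives $|\{v_j=0\}\symdiff\{v=0\}|\to0$, that is,
$$
\|\chi_{A_{\mu_j}}-\chi_{A^i}\|_{L^1(\Omega)}\to0.
$$

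The main obstacle is Step 4. Lemma \ref{lema:simetrica} needs the zero-set measures to converge, and this requires the exact identity $|\{u=0\}\cap\Omega|=\alpha|\Omega|$ at both levels. Semicontinuity (Fatou) alone would give only an inequality. The exact identity is supplied by Theorem \ref{teo:identif} together with Remark \ref{obs:Matuyoung}, which makes that theorem available for $M_i$.
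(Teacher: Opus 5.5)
Your argument follows the paper's proof: rescale the extremals by $r(\mu_j)$, use the last part of Theorem \ref{limitecociente}, apply Theorem \ref{teo:identif} at level $\mu_j$ for $G$ and at level $1$ for $M_i$ to get $|\{v_j=0\}\cap\Omega|=\alpha|\Omega|=|\{v=0\}\cap\Omega|$, and conclude with Lemma \ref{lema:simetrica}. Steps 2--4 are fine. Your Step 3, which defines $A^i:=\{v=0\}\cap\Omega$ and checks that it is optimal, is more explicit than the paper.

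\textbf{The gap is the fallback in Step 1.} The sets $A_{\mu_j}$ are arbitrary measurable optimal sets, so the non-closed case is the one that matters. In that case, replacing $u_j$ by an extremal of $\Lambda_{s,\mu_j}(\alpha)$ from Theorem \ref{exop} breaks the link with the prescribed sets. The zero set of that function is \emph{an} optimal set, but nothing forces it to coincide with $A_{\mu_j}$, since optimal sets are not known to be unique. Step 4 would then prove convergence for a different family of sets than the one in the statement. The identity $A_{\mu_j}=\{u_j=0\}\cap\Omega$ from Corollary \ref{coro:identif} genuinely requires $u_j\in W^{s,G}_{A_{\mu_j}}(\Omega)$.

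\textbf{The fix is to drop the fallback.} In the proof of Theorem \ref{teo:existeA}, closedness of $A$ is used only to produce one admissible function. For an optimal set this is automatic, because $\lambda_{s,\mu_j}(A_{\mu_j})=\Lambda_{s,\mu_j}(\alpha)<\infty$. The rest of the direct method works verbatim for measurable $A$: boundedness, weak and a.e.\ limits (which preserve $u=0$ a.e.\ in $A_{\mu_j}$), and lower semicontinuity. This gives the extremal $u_j\in W^{s,G}_{A_{\mu_j}}(\Omega)$ you need.

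The paper's own proof takes $u_{\mu_j}$ from Theorem \ref{exop} and then writes $\chi_{A_{\mu_j}}=\chi_{\{u_{\mu_j}=0\}\cap\Omega}$, so it silently relies on this same existence. Your primary choice of an extremal adapted to $A_{\mu_j}$ is the right way to make that step rigorous.
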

\begin{proof}
Let $u_{\mu_j}$ be a nonnegative extremal for $\Lambda_{s,\mu_j}(\alpha)$, given by Theorem \ref{exop} and Lemma \ref{lema:signo}, and $v_j:=u_{\mu_j}/r(\mu_j)$ the rescaled extremals where $r(\mu)$ is as in Theorem \ref{limitecociente}. Observe that by Theorem \ref{teo:identif}
$$
|\{v_j=0\}\cap\Omega|=|\{u_{\mu_j}=0\}\cap\Omega|=\alpha |\Omega|.
$$
Also, by Theorem \ref{limitecociente} and up to a subsequence which we do not relabel, $v_j\to u^i$ in $L^{p^-}(\Omega)$ and, after a further extraction, almost everywhere in $\Omega$, where $u^0$ is a nonnegative extremal for $E^{M_0}(1,\alpha)$ if $\mu_j\downarrow 0$ and $u^\infty$ is a nonnegative extremal for $E^{M_\infty}(1,\alpha)$ if $\mu_j\to\infty$;
therefore by Theorem \ref{teo:identif} $|\{u^i=0\}\cap\Omega|=\alpha |\Omega|$ for $i\in \{0,\infty\}$ and so
$$
\lim_{j\to\infty}|\{v_j=0\}\cap\Omega|=|\{u^i=0\}\cap\Omega|\qquad i\in \{0,\infty\}.
$$
Hence by Lemma \ref{lema:simetrica}
$$
\lim_{j\to \infty}|\{v_j=0\}\cap \Omega\symdiff \{u^i=0\}\cap \Omega|=0\qquad i\in \{0,\infty\},
$$
therefore
\begin{align*}
\int_\Omega \left |\chi_{A_{\mu_j}}(x)-\chi_{A^i}(x)\right |\,dx&=\int_\Omega \left |\chi_{\{u_{\mu_j}=0\}\cap \Omega}(x)-\chi_{\{u^i=0\}\cap\Omega}(x)\right |\,dx
\\
&=\int_\Omega \left |\chi_{\{v_j=0\}\cap \Omega}(x)-\chi_{\{u^i=0\}\cap\Omega}(x)\right |\,dx
\\
&=\int_\Omega \chi_{\{v_j=0\}\cap \Omega\symdiff \{u^i=0\}\cap \Omega}(x)\,dx
\\
&=|\{v_j=0\}\cap \Omega\symdiff \{u^i=0\}\cap \Omega|,
\end{align*}
then
$$
\chi_{A_{\mu_j}}(x)\to \chi_{A^i}(x)\text{ strongly in }L^1(\Omega).
$$
\end{proof}
\section*{Funding}
This research received no support from government or other public funding agencies, research grants or projects; it was financed entirely by the author with resources obtained from personal investments.
\section*{Declaration on the use of artificial intelligence}

Artificial intelligence tools (Claude, model Opus 5.5, Anthropic) were used in the preparation of this work, in the following ways. They were used for bibliographic search and for the correction of the English and the editing of the text. They also contributed to the mathematical content: the ideas for the construction of the cut--off functions used in the proofs of Theorem \ref{teo:identif} and Lemma \ref{lema:densidad}, the non-concentration condition \eqref{NC} together with Remark \ref{obs:NC}, the formulation of the constants $E_W^M(\tau,\alpha)$ and $E_C^M(\tau,\alpha)$ and the proof of Theorem \ref{cotascociente}, and Example \ref{ejem:distintos}. The author has checked every statement and every proof of the paper, and is solely responsible for its contents.

\bibliographystyle{abbrv}
\bibliography{refs}
\end{document}